\documentclass[11pt,a4paper]{amsart}
\usepackage[headings]{fullpage}
\usepackage{amssymb,amscd,hyperref}
\usepackage{amsthm}
\usepackage[alphabetic, nobysame]{amsrefs}
\usepackage[all,cmtip]{xy}
\usepackage{color}
\usepackage{xcolor}
\usepackage{tikz-cd}
\numberwithin{equation}{section}
\newtheorem{thm}{Theorem}[section]
\newtheorem{lem}[thm]{Lemma}
\newtheorem{prop}[thm]{Proposition}
\newtheorem{cor}[thm]{Corollary}

\theoremstyle{definition}
\newtheorem{defn}[thm]{Definition}
\newtheorem{ex}[thm]{Example}

\theoremstyle{remark}
\newtheorem{rem}[thm]{Remark}
\newtheorem*{acknowledgements}{Acknowledgements}

\def\thr{\mathsf{THR}}

\def\bigbox{\raisebox{-1mm}{\scalebox{1.5}{$\square$}}}

\def\fix{\mathrm{fix}}

\newcommand{\und}[1]{{\underline{#1}}}
\def\norm{\mathsf{norm}}
\def\tr{\mathsf{tr}}
\def\res{\mathsf{res}}

\def\ra{\rightarrow}
\def\leq{\leqslant}
\def\geq{\geqslant}

\def\cC{\mathcal{C}}
\def\cL{\mathcal{L}}

\def\ie{\emph{i.e.}}

\def\id{\mathrm{id}}

\begin{document}
\title{Real Hochschild homology as an equivariant Loday
  construction}

\author{Ayelet Lindenstrauss}
\address{Mathematics Department, Indiana University, 831 East Third Street,
  Bloomington, IN 47405, USA}
\email{alindens@iu.edu}

\author{Birgit Richter}
\address{Fachbereich Mathematik der Universit\"at Hamburg,
  Bundesstra{\ss}e 55, 20146 Hamburg,  Germany}
\email{birgit.richter@uni-hamburg.de}

\author{Foling Zou}
\address{Academy of Mathematics and Systems Science, Chinese Academy of Sciences,
 55 East Zhongguancun Road, Beijing, 100190, China}
\email{zoufoling@amss.ac.cn}

\date{\today}
\keywords{Real topological Hochschild homology, equivariant Loday constructions, Tambara functors}

\begin{abstract}
  Equivariant Loday constructions are a means for providing geometric
  interpretations of equivariant homology theories. They are usually
  constructed for a simplicial 
  $G$-set and a $G$-Tambara functor. We study situations where --
  depending on the isotropy subgroups occurring in the simplicial $G$-set -- one can work with $H$-Tambara functors
  for a suitable subgroup $H$ of $G$. We apply this to give an interpretation
  of Real Hochschild homology of discrete $E_\sigma$-rings as
  equivariant Loday constructions  
  where we consider $2m$-gons with a geometrically defined action of the
  dihedral groups $D_{2m}$ for all $m \geq 1$.  The action of
  symmetric groups on $1$-skeleta of permutohedra also gives examples
  with isotropy groups $C_2$. 
  \end{abstract}
\maketitle

\section{Introduction}

For Green and Tambara functors for a finite group $G$ one can consider
several homology theories. The simplest one is equivariant Hochschild
homology where one just replaces the tensor product in ordinary
Hochschild homology with the box product of Mackey functors; see for
instance \cite[\S 4]{mqs} for some of its properties. 
A variant of this is the twisted cyclic nerve of
Blumberg-Gerhardt-Hill-Lawson \cite{bghl} for $C_n$-Green functors
where the last face map is twisted by the group action. 

Real topological Hochschild homology has an algebraic counterpart for
$C_2$-Tambara functors or more generally for discrete $E_\sigma$-rings
such as fixed point Tambara functors of associative rings with a
$C_2$-action via anti-involution. This
homology can be modelled as the homotopy groups of a two-sided bar
construction $B(\und{R}, N_e^{C_2}i^{C_2}_e \und{R}, \und{R})$, where
$  N_e^{C_2}i^{C_2}_e \und{R}$ denotes the norm restriction of
$\und{R}$.

All the homology theories above can
be modelled by equivariant Loday constructions as defined in
\cite{lrz-gloday}: The input for a $G$-equivariant Loday construction
is a simplicial 
finite $G$-set $X$ and a $G$-Tambara functor 
$\und{T}$. We then form a simplicial $G$-Tambara functor
$\cL^G_X(\und{T})$ whose $n$-simplices are
\[ \cL^G_X(\und{T})_n = X_n \otimes \und{T}. \]
As $X_n$ is a finite $G$-set, it is of the form $X_n =
\bigsqcup_{i=1}^m G/H_i$ for some isotropy subgroups $H_i < G$. Work of
Mazur \cite{mazur} (see also \cite[\S 5]{hm}) and Hoyer \cite{hoyer} ensures that the assignment 
\[ X_n \otimes \und{T} := N_{H_1}^G i_{H_1}^G\und{T} \Box \ldots \Box
  N_{H_m}^G i_{H_m}^G\und{T} \]
is well-defined and natural in $X_n$ and therefore gives rise to a simplicial
$G$-Tambara functor. Here, $i_{H_j}^G\und{T}$ denotes the restriction
of the $G$-Tambara functor $\und{T}$ to an $H_j$-Tambara functor and $N_{H_j}^G
\colon H_j\text{-Tamb} \ra G\text{-Tamb}$ is the left adjoint functor
to restriction, called the norm.

In suitable situations we show how to define $G$-Loday constructions
for $H$-Tambara functors for $H < G$. Our motivating example is Real
Hochschild homology in the sense of \cite[Definition 1.4]{akgh}: For a
discrete $E_\sigma$-ring $\und{R}$ the \emph{Real $D_{2m}$-Hochschild
homology of $\und{R}$} is the graded $D_{2m}$-Mackey functor
\[ \pi_*B(N_{D_2}^{D_{2m}}\und{R}, N_e^{D_{2m}}i_e^{D_2}\und{R},
  N_{\zeta D_2\zeta^{-1}}^{D_{2m}}c_\zeta(\und{R})). \]
Here, $D_{2m}$ denotes the dihedral group with $2m$
elements. In particular, $D_2$-Tambara functors are examples of discrete
$E_\sigma$-rings. Thus the above homology theory starts with something
with $D_2$-symmetry and produces an output with $D_{2m}$-symmetry.

For a ring spectrum $A$ with anti-involution Chloe Lewis \cite[Theorem 3.9]{lewis} constructs a B\"okstedt type spectral sequence whose $E^2$-page consists of Real Hochschild homology groups of the $E$-homology of $A$  which converges to the $E$-homology of the Real topological Hochschild homology of $A$: 
\[ E^2_{*,\star} = \underline{\mathsf{HR}}_*^{E_\star, D_{2m}}(\underline{(i_{D_2}^{D_{2m}}E)}_\star(A)) \Rightarrow \underline{E}_\star(i_{D_{2m}}^{O(2)}(\thr(A))).\]
Here, $E$ is a genuine commutative $D_{2m}$ ring spectrum, and there are some flatness and freeness requirements to be satisfied. As $\thr(A)$ is the target of a trace map from the Real K-theory of $A$, $KR(A)$, calculating these group is of importance.

We give a topological interpretation of Real $D_{2m}$-Hochschild
homology for all $m \geq 1$ by letting $D_{2m}$ act on (the
$1$-skeleton of a simplicial model of) a regular $2m$-gon $P_{2m}$.

\vspace{2cm}

\hspace{4cm}
\begin{picture}(10,5)
\setlength{\unitlength}{0.4cm}
\put(0,0){$\bullet$}
\put(1,3){$\bullet$}
\put(4,4){$\bullet$}

\put(7,3){$\bullet$}

\put(8,0){$\bullet$}
\put(7,-3){$\bullet$}

\put(4,-4){$\bullet$}
\put(1,-3){$\bullet$}
\put(0.2,0.2){\vector(1,3){1}}
\put(0.2,0.2){\vector(1,-3){1}}
\put(8.2,0.2){\vector(-1,3){1}}
\put(8.2,0.2){\vector(-1,-3){1}}
\put(4.2,4.2){\vector(3,-1){3}}
\put(4.2,4.2){\vector(-3,-1){3}}
\put(4.2,-3.8){\vector(3,1){3}}
\put(4.2,-3.8){\vector(-3,1){3}}
\put(-3,0){$P_8 = $}
\end{picture}

\vspace{1.7cm}

In the above example the isotropy subgroups occurring in $P_8$ are the
trivial group $e$, $\langle s\rangle$, $\langle r^2s = 
sr^2\rangle$, $\langle
rs\rangle$, and $\langle r^3s = 
  sr\rangle$. Here we denote by $s \in D_{2m}$ a generator of order two and $r$
has order $m$. We can model the corresponding simplicial sets $P_{2m}$
by just
considering the isotropy subgroups $e$, $\langle s\rangle$ and $\langle
rs\rangle$ for all $m$.  A priori we
would need a $D_{2m}$-Tambara functor for a $D_{2m}$-Loday
construction but by carefully adjusting our Loday construction, we can
extend its definition so that it accepts a $D_2$-Tambara functor as input. We
denote
the corresponding Loday construction by $\cL_{P_{2m}}^{D_{2m};D_2}(-)$ to denote
the ambient symmetry group -- in this case $D_{2m}$ -- but also the isotropy
group that determines which type of Tambara functors to use (in this case
$D_2$). This yields: 

\bigskip \noindent
\textbf{Theorem} \ref{thm:RealHH}. Assume that $\und{R}$ is a
$D_2$-Tambara functor and let $\varphi 
  \colon D_{2m} \cong D_{2m}$ be the automorphism of $D_{2m}$ determined
  by $\varphi(rs)=s$ and $\varphi(r)=r$. Then
  \[ \mathcal{L}_{P_{2m}}^{D_{2m};D_2}(\und{R}) \cong B(N^{D_{2m}}_{D_2}
    \und{R}, N^{D_{2m}}_e i_e \und{R}, N^{D_{2m}}_{D'_2} \varphi^*(\und{R})),
  \]
 where $D_2'=\varphi^{-1}(D_2)$,  and its homotopy groups are isomorphic to the Real $D_{2m}$-Hochschild
  homology
  $\und{HR}_*^{D_{2m}}(\und{R})$ of $\und{R}$ viewed as a discrete
  $E_\sigma$-ring with values in graded $D_{2m}$-Mackey functors.

\medskip  
We extend the above result and show that the Loday constructions
$\cL_{P_{2m}}^{D_{2m};D_2}(\und{R})$ can actually be defined for discrete
$E_\sigma$-rings (see Proposition \ref{prop:esigma}). We also show that in the
setting of equivariant stable homotopy theory $i^{O(2)}_{D_{2m}}\thr(A)$ can also
be interpreted as a Loday construction for all ring spectra $A$ with
anti-involution (Proposition \ref{prop:thr}) 
\[ i^{O(2)}_{D_{2m}}\thr(A) \simeq \cL_{P_{2m}}^{D_{2m};D_2}(A) \]
and that this yields an isomorphism on $\und{\pi}_0$ if $A$ is connective:
\[ \und{\pi}_0^{D_{2m}}(i^{O(2)}_{D_{2m}}\thr(A)) \cong \und{\pi}_0^{D_{2m}}\cL_{P_{2m}}^{D_{2m};D_2}(A) \cong \cL_{P_{2m}}^{D_{2m};D_2}(\und{\pi}_0^{D_2} A). \]

We first show in section \ref{sec:auto} how to transform an
$H'$-Tambara functor into an 
$H$-Tambara functor if $\varphi$ is an isomorphisms between $H$ and $H'$. We
will later apply this in situations where $\varphi$ is the restriction of an
automorphism of $G$.  Note that for even
$m$ the subgroups $\langle s\rangle$ and $\langle rs\rangle$ are
\emph{not} conjugate in $D_{2m}$, so we specifically need a setting
where $\varphi$ is not necessarily an inner automorphism. 

Another subtle point concerns group actions on norms. In
\cite[Remark 2.5]{lrz-gloday} we used the diagonal Weyl group action
of $H$ in $G$, $W_G(H)$, on 
$N_{H}^Gi_{H}^G\und{R}$ which is a 
combination of the action that permutes tensor factors and a 
coordinatewise action.  For instance if
$G = C_2$ and $H = e$, then $N_e^{C_2}i_e^{C_2}\und{R}(C_2/e) = \und{R}(C_2/e)
\otimes \und{R}(C_2/e)$ and with the diagonal action the generator  $\tau \in C_2$ acts by
sending $a \otimes b$ to
$\tau(b) \otimes \tau(a)$. For $G$-Loday constructions of $G$-Tambara
functors this is one possible \emph{choice} and this choice for
instance yields an identification of the twisted cyclic nerve with an
equivariant Loday construction \cite[Proposition 7.3]{lrz-gloday}. We
could also choose the Weyl group action on $N_{H}^Gi_{H}^G\und{R}$ where
$W_G(H)$ just acts by permuting the tensor factors. We call the latter the
flip action. Beware that this only
works if one adjusts the counit maps $N_{H}^Gi_{H}^G\und{R} \ra
\und{R}$: For the diagonal action, the counit map at the free level is
just the multiplication, but for the flip action the augmentation is
more complicated. For $H=e$, one sends an element
$\bigotimes_{g \in G} t_g$ in $\und{T}(G/e)^{\otimes G}$ to $\prod_{g
  \in G} g(t_g)$. See section \ref{sec:norms} for a detailed discussion. 

In section \ref{sec:isotropy} we show how to define a $G$-equivariant Loday
construction with respect to a finite $G$-simplicial set $X$ and an
$H$-Tambara functor $\und{R}$ if the isotropy subgroups occurring in a
model of $X$ are  $e$
and $H$ and the conjugates of $H$ for some subgroup $H <
G$. If $H$ is normal in $G$, then a simplicial model of a Cayley graph
for $G/H$ viewed as a $G$-simplicial set is a natural example. The family of
permutohedra also gives rise to examples: 
We consider the  Loday constructions for symmetric groups where we let
$\Sigma_n$ act on the $1$-skeleton of the $n$-permutohedron. The isotropy
groups in these
cases are generated by transpositions, so they are all conjugate to
each other. 

We also extend this result to some cases where  the
isotropy consists of subgroups $e,H,H'$ and their conjugates. Here we assume that there is an automorphism of $G$,
$\varphi$, with $H' = \varphi(H)$. This gives rise to the example of Real
Hochschild homology and we prove the identification of the corresponding Loday construction with Real Hochschild homology for discrete $E_\sigma$-rings in
section \ref{sec:RealHH}.

\begin{acknowledgements}

The first named author was supported by a grant from the Simons
Foundation (\#359565, Ayelet Lindenstrauss). The second named author thanks
Churchill College Cambridge for its hospitality.    
The second and third named author would like to thank the Isaac Newton Institute
for Mathematical Sciences, Cambridge, for support and hospitality
during the programme Equivariant homotopy theory in context, where
work on this paper was undertaken. This work was supported by EPSRC
grant EP/Z000580/1. 

The first named author would like to thank Ben Spitz and Michael Larsen for
useful conversations. The authors thank John Rognes for spotting a superfluous
assumption and Mike Hill for pointing out that the two possible $G$-actions on norm restrictions $N_e^Gi_e^G\und{R}$ are isomorphic. 
\end{acknowledgements}

\section{Change of groups for Tambara
  functors} \label{sec:auto}
In the context of Tambara functors the effect of conjugation maps is
well understood. We need a slightly more general property. The following result
is probably well-known, but we couldn't find a reference in the literature:

\begin{lem} \label{lem:varphir} 
Assume that $H$ and $H'$ are finite groups and that $\varphi \colon H \ra H'$ is an isomorphism. Let $\und{R}$ be a $H'$-Tambara functor. Then the assignment 
\[ (\varphi^*\und{R})(H/K) := \und{R}(\varphi H/\varphi K) \]
gives rise to an  $H$-Tambara functor $\varphi^*\und{R}$.  
\end{lem}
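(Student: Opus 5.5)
The plan is to realize $\varphi^*\und{R}$ as precomposition of $\und{R}$ with a functor between the categories of finite sets on which Tambara functors are defined. Recall that an $H$-Tambara functor can be viewed as a product-preserving functor $\mathcal{U}_H \ra \mathrm{Sets}$, where $\mathcal{U}_H$ is the category of bispans $X \leftarrow A \ra B \ra Y$ of finite $H$-sets, and that additive inverses follow from the usual group-completion condition. The isomorphism $\varphi$ induces an isomorphism of categories
\[ \Phi \colon H\text{-sets} \ra H'\text{-sets}, \qquad \Phi(X) = X \text{ with } h' \cdot x := \varphi^{-1}(h')x. \]
Under $\Phi$ we have $\Phi(H/K) \cong H'/\varphi K$ via $hK \mapsto \varphi(h)\varphi K$. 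The inverse of $\Phi$ is induced by $\varphi^{-1}$ in the same way.

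The key steps, in order, are as follows.

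First, I check that $\Phi$ is an equivalence, indeed an isomorphism, of categories. It is the identity on underlying sets and maps, so it preserves finite limits, finite coproducts, pullbacks and exponential diagrams. In particular it carries the dependent products (distributors) of the category of finite $H$-sets to those of $H'$-sets. The point is that these are constructed from underlying sets of sections together with the induced action, and that construction commutes with relabelling the group along $\varphi$.

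Second, I deduce that $\Phi$ induces a product-preserving functor $\mathcal{U}_H \ra \mathcal{U}_{H'}$. Composition of bispans is defined using only pullbacks and dependent products, so it is preserved. Disjoint unions are preserved as well, so the functor preserves products in $\mathcal{U}$.

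Third, I define $\varphi^*\und{R} := \und{R} \circ \Phi$. This is a product-preserving functor $\mathcal{U}_H \ra \mathrm{Sets}$, hence an $H$-semi-Tambara functor. Its value sets carry the same abelian group structures as those of $\und{R}$, so it is a Tambara functor. On orbits it gives $\und{R}(\Phi(H/K)) \cong \und{R}(H'/\varphi K)$, which is the stated assignment. Explicitly, the restriction, transfer and norm along $H/L \ra H/K$ become $\res$, $\tr$ and $\norm$ along $H'/\varphi L \ra H'/\varphi K$. Conjugation by $h$ becomes conjugation by $\varphi(h)$.

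The only real obstacle is the second step, namely verifying compatibility with dependent products, since that is where multiplicative transfers enter. I would handle it by writing $\Pi_f$ for $f \colon A \ra B$ as the set of pairs $(b, s)$ with $s$ a section of $f$ over $b$, carrying the conjugation action. I would then note that this set and its action are literally identical whether computed before or after applying $\Phi$. As an alternative sanity check, one can verify the Tambara reciprocity formulas directly on orbits, using the fact that double coset decompositions correspond under $\varphi$.
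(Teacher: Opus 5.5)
Your proof is correct, but it is organized differently from the paper's.

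The paper works orbit by orbit. It defines $c_h$, $\res$, $\tr$ and $\norm$ for $\varphi^*\und{R}$ directly as the corresponding maps of $\und{R}$ for $\varphi(h)$, $\varphi K$, $\varphi L$, and declares the Mackey, Green and double coset axioms straightforward. It then treats the exponential formula separately. It transports an $H$-set $X$ to a $\varphi H$-set $\varphi X$ through an orbit decomposition, heals the choice of orbit representatives with conjugation isomorphisms, and checks that $\varphi_\bullet(f)=\varphi\circ f\circ\varphi^{-1}$ sends exponential diagrams to exponential diagrams.

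You instead use Tambara's bispan description together with restriction of scalars along $\varphi^{-1}$. This functor keeps underlying sets and maps fixed, so it is an isomorphism of categories. It therefore preserves pullbacks, coproducts and dependent products: the latter are right adjoints to pullback on slice categories, or one can use your explicit check of $\Pi_f$. Hence it induces a product-preserving isomorphism $\mathcal{U}_H \ra \mathcal{U}_{H'}$, and all Tambara axioms for $\und{R}\circ\Phi$, including reciprocity, follow at once.

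Your route buys two things. There is no orbit-representative ambiguity to manage, since your $\Phi(X)$ is a choice-free version of the paper's $\varphi X$. And no axiom is left as ``straightforward''.

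The paper's route buys explicit formulas on orbits from the outset, which it uses later, e.g.\ for $\varepsilon^f_\varphi$ and Lemma~\ref{lem:pi0}. You recover exactly these formulas through the identification $\Phi(H/K)\cong H'/\varphi K$, $hK\mapsto\varphi(h)\varphi K$. Under it, projections go to projections and conjugation by $h$ becomes conjugation by $\varphi(h)$.
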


\begin{proof}
We first define the structure maps of $\varphi^*\und{R}$. 

\begin{enumerate}
\item 
For any $h \in H$ we have to define $c_h \colon (\varphi^*\und{R})(H/K) \ra (\varphi^*\und{R})(H/hKh^{-1})$, \ie, we need $c_h \colon \und{R}(\varphi H/\varphi K) \ra \und{R}(\varphi H/ \varphi(hKh^{-1}))$ and we define the conjugation map as 
\[ c_h = c_h(\varphi^*\und{R}) := c_{\varphi(h)}(\und{R}). \]  

\item 
Similarly, the restriction $\res_K^L \colon (\varphi^*\und{R})(H/L) \ra (\varphi^*\und{R})(H/K)$ has to be a map from $\und{R}(\varphi H/\varphi L)$ to $\und{R}(\varphi H/\varphi K)$ and we define it as 
\[ \res_K^L = \res_K^L(\varphi^*\und{R}) := \res_{\varphi K}^{\varphi L}(\und{R}). \] 

\item 
The transfer $\tr_K^L \colon (\varphi^*\und{R})(H/K) \ra (\varphi^*\und{R})(H/L)$ is defined as $\tr_K^L = \tr_K^L(\varphi^*\und{R}) := \tr_{\varphi K}^{\varphi L}(\und{R})$. 
\item 
Last, but not least, the norm $\norm_K^L \colon  (\varphi^*\und{R})(H/K) \ra (\varphi^*\und{R})(H/L)$ is given by $\norm_K^L = \norm_K^L(\varphi^*\und{R}) := \norm_{\varphi K}^{\varphi L}(\und{R})$.  

\end{enumerate}

With these definitions it is straightforward to see that $\varphi^*\und{R}$ satisfies the Mackey and Green functor axioms. It is also clear that the norm maps are maps of multiplicative monoids, that they interact nicely with conjugation, that they satisfy transitivity for subgroups $L' < L < K$ and that they obey a double coset formula. 

For the exponential formula we have to say how we transform an arbitrary
$H$-set $X$ into a $\varphi H$-set $\varphi X$. If we express $X$ via the
orbit decomposition as $X = H/K_1 \sqcup \ldots \sqcup H/K_\ell$, we know that
disjoint unions map to products under Mackey functors. Therefore we have to
show that different choices of representatives for orbits result in coherent
isomorphisms of Tambara functors.
So assume that $X = H/H_x \cong Hx$ is an orbit. If
we choose $y \in Hx$, then $y = hx$ for one $h \in H$ and the stabilizer of $y$, $H_y$, is equal to $hH_xh^{-1}$. This yields
\[ \und{R}(\varphi H/\varphi H_x) = (\varphi^*\und{R})(H/H_x)
  = (\varphi^*\und{R})(H/h^{-1}H_yh) = \und{R}\big(\varphi H/(\varphi(h))^{-1}\varphi
  H_y \varphi(h)\big)\]
and the latter receives the isomorphism
\[ c_{\varphi(h)^{-1}} \colon (\varphi^*\und{R})(H/H_y) = \und{R}(\varphi H/\varphi H_y) \ra \und{R}\big(\varphi H/(\varphi(h))^{-1}\varphi
  H_y \varphi(h)\big). \] 
So we get nothing but the usual ambiguity when defining Tambara functors on orbits which can be healed by conjugation isomorphisms \cite[Corollary 2.4.6]{hoyer}.

If $f \colon X \ra Y$ is an $H$-equivariant map of finite $H$-sets, then we define $\varphi_\bullet(f) \colon \varphi X \ra \varphi Y$ as $\varphi_\bullet(f) = \varphi \circ f \circ
\varphi^{-1}$. Then $\varphi_{\bullet}(f)$ is an
$\varphi(H)$-equivariant map.

\bigskip 
Assume that for given $f$ and $g$ 
\[\xymatrix{  X \ar[d]_f & A \ar[l]_g & X \times_Y \prod_f A
    \ar[d]^{pr_2}\ar[l]_(0.6)\varepsilon \\ Y & & \prod_f A  \ar[ll]_q } \]
is an exponential diagram with $\prod_f A = \{(y,s), y \in Y, s \colon f^{-1}(y) \ra A \text{ with } g(s(x)) = x \text{ for all } x \in f^{-1}(y)\}$, $\varepsilon(x,y,s) = s(x)$ and $q(y,s) = y$. We claim that 
\[\xymatrix{  \varphi X \ar[d]_{\varphi_\bullet(f)} & \varphi A \ar[l]_{\varphi_\bullet(g)} & & \varphi X \times_{\varphi Y} \prod_{\varphi_\bullet(f)} \varphi A
    \ar[d]^{\varphi_\bullet(pr_2) = pr_2}\ar[ll]_(0.6){\varphi_\bullet(\varepsilon)} \\ \varphi Y & & & \prod_{\varphi_\bullet(f)}  \varphi A  \ar[lll]_{\varphi_\bullet(q)} } \]
is an  exponential diagram: A  section $s \colon f^{-1}(y) \ra A$ is sent to the section 
\[ \varphi_\bullet(s)
\colon (\varphi_\bullet(f))^{-1}(\varphi(y)) = (\varphi \circ f \circ
\varphi^{-1})^{-1}(\varphi(y)) \ra \varphi A\]
and hence $\varphi(\prod_f A) = \prod_{\varphi_\bullet(f)} \varphi A$. 
Applying $\varphi$ to finite $H$-sets and $\varphi_\bullet$ to $H$-equivariant maps preserves pullbacks. This proves the claim. As $\und{R}$ is an $\varphi H$-Tambara functor, we get for the structure maps for $\und{R}$
\[ \norm_{\varphi_\bullet(f)} \circ \tr_{\varphi_\bullet (g)} = \tr_{\varphi_\bullet (q)} \circ \norm_{pr_2} \circ \res_{\varphi_\bullet(\varepsilon)} \]
and this gives
\[ \norm_{f} \circ \tr_{g} = \tr_{q} \circ \norm_{pr_2} \circ \res_{\varepsilon}\]
for $\varphi^*\und{R}$. 
\end{proof}

  

\begin{rem}
  Pulling back Tambara functors behaves  contravariantly, as usual:
  If $\varphi$ and
  $\psi$ are two isomorphisms with $\varphi(H) = H'$ and
  $\psi(H')=H''$, then for any $H''$-Tambara functor $\und{R}$ we
  obtain an $H$-Tambara functor 
  \[ (\psi \circ \varphi)^*(\und{R}) = \varphi^*(\psi^*(\und{R})). \]

  Later, we will use the lemma above if  $\varphi \colon G \ra G$ is an
  automorphism of $G$ whose restriction to a subgroup $H$ gives rise to an
  isomorphism between $H$ and $\varphi(H)$. 
  \end{rem}

  If $X$ is a genuine $H'$-spectrum and $\varphi \colon H \ra H'$ is an isomorphism, then we can pull $X$ back to $\varphi^*X$ where the latter is now a genuine
  $H$-spectrum. 
  The external $H$-action
  on $\varphi^*X$ is then defined via
  \[ \xymatrix@1{H_+ \wedge \varphi^*X \ar[rr]^{\varphi_+\wedge \id} &&
      H'_+ \wedge X \ar[r] & X, }\] 
  where the second map is the given $H'$-action on $X$. In particular, for any subgroup $K < H$ we get $(\varphi^*X)^K = X^{\varphi(K)}$. For every genuine $H'$-spectrum $X$, $\und{\pi}_0^{H'}X$ is an $H'$-Mackey functor. 

  \begin{lem} \label{lem:pi0} 
    If $X$ is a connective genuine $H'$-spectrum and if $\varphi$ is an isomorphism of finite groups $\varphi \colon H \ra H'$, then
    the pullback construction is compatible with taking $\und{\pi}_0$: 
    \[ \und{\pi}_0^{H}(\varphi^*X) \cong \varphi^*(\und{\pi}_0^{H'}(X)). \]
  \end{lem}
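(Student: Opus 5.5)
We compare the two Mackey functors levelwise and then check that the structure maps match. For a subgroup $K<H$, the value of $\und{\pi}_0^{H}(\varphi^*X)$ at $H/K$ is $\pi_0((\varphi^*X)^K)$, which by the formula $(\varphi^*X)^K = X^{\varphi(K)}$ recorded above is $\pi_0(X^{\varphi K}) = \und{\pi}_0^{H'}(X)(H'/\varphi K)$. This is by definition $\varphi^*(\und{\pi}_0^{H'}X)(H/K)$, with the definition of $\varphi^*$ on Mackey functors taken verbatim from Lemma \ref{lem:varphir}. Equivalently, one can use $[H/K_+, \varphi^*X]^H$ and note that $\varphi^*$ identifies genuine $H'$-spectra with genuine $H$-spectra (an equivalence with inverse $(\varphi^{-1})^*$) sending $H'/\varphi K_+$ to $H/K_+$. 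Hence $[H/K_+,\varphi^*X]^H \cong [H'/\varphi K_+, X]^{H'}$. This second description is the one I will use for the structure maps.

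Next we check compatibility with the Mackey structure. Conjugation $c_h$ on the left is induced by the map $H/hKh^{-1} \to H/K$, $x\mapsto xh$. Under $\varphi$ this becomes the map $H'/\varphi(h)\varphi K\varphi(h)^{-1}\to H'/\varphi K$ given by right multiplication by $\varphi(h)$, which induces $c_{\varphi(h)}$. Restriction is induced by the projection $H/K\to H/L$, which corresponds to the projection $H'/\varphi K\to H'/\varphi L$. Transfer is induced by the stable transfer map $\Sigma^\infty_H H/L_+\to \Sigma^\infty_H H/K_+$, and its image under $\varphi^*$ is the transfer for $\varphi K<\varphi L$. So all three structure maps agree with those defined in Lemma \ref{lem:varphir}. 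Since $\varphi^*$ is an equivalence of homotopy categories compatible with suspension spectra of finite $G$-sets, the Burnside category acts compatibly, and we obtain an isomorphism of $H$-Mackey functors.

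The main obstacle is the transfer step: we must justify that pulling back along an isomorphism carries the equivariant Pontryagin–Thom transfer to the transfer. I would handle this by observing that $\varphi^*$ commutes with $\Sigma^\infty$ of finite $H'$-sets and sends representation spheres $S^V$ to $S^{\varphi^*V}$. Since $\varphi$ induces an isomorphism of universes, the embedding used to build the transfer is carried to an admissible embedding, so the collapse map goes to the collapse map. Connectivity plays no real role beyond making $\und{\pi}_0$ the relevant invariant. If the statement is meant to include the Tambara structure, for $X$ a commutative ring spectrum, then the norms are handled in the same way, using $\varphi^*N_{\varphi K}^{\varphi L} \simeq N_K^L\varphi^*$; this follows from the indexed smash product being defined via the $L$-set $L/K$.
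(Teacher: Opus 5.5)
Your proposal is correct and follows the paper's argument: the paper's proof is exactly the levelwise identification $\pi_0((\varphi^*X)^K) = \pi_0(X^{\varphi(K)}) = \varphi^*(\und{\pi}_0^{H'}X)(H/K)$. Your additional check, via $\varphi^*(H'/\varphi(K)_+) \cong H/K_+$, that conjugations, restrictions and transfers correspond is sound and makes explicit what the paper leaves implicit; you are also right that connectivity is never used.
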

\begin{proof}
  By definition, the value of the Mackey functor $\und{\pi}_0(X)$ on an orbit $H'/K'$ is $\pi_0(X^{K'})$. Hence
  \[ \varphi^*(\und{\pi}_0^{H'}(X))(H/K) = \und{\pi}_0^{H'}(X)(\varphi(H)/\varphi(K)) = \pi_0(X^{\varphi(K)}) = \pi_0((\varphi^*X)
    ^K) = \und{\pi}_0^{H}(\varphi^*X)(H/K). \] 
\end{proof}
Pulling back $H'$-spectra to $H$-spectra is symmetric monoidal, so we get a corresponding statement for genuine $H'$-commutative ring spectra and $H'$-Tambara functors.

\section{Group actions on norms} \label{sec:norms}
If we use the spectrum definition of the norm of an $H$-Tambara functor
$\und{R}$ using its Eilenberg Mac Lane spectrum $H\und{R}$ (while suppressing
change-of-universe functors) like in \cite[A.52, A.57]{hhr} as 
\begin{equation} \label{eq:normspectrum} 
  N_H^G\und{R} := \und{\pi}_0^G\big(\bigwedge_{G/H} H\und{R}\big), \end{equation}
then by definition of the functor $\und{\pi}_0$ 
\[ N_H^G\und{R}(G/K) = \und{\pi}_0^G\big(\bigwedge_{G/H}
  H\und{R}\big)(G/K) =  \pi_0\big((\bigwedge_{G/H} H\und{R})^K \big)
\]

The $G$-action on the spectrum $\bigwedge_{G/H}
H\und{R}$ can be described on the level of $\pi_0$
when we evaluate at the free level $G/e$: 
\[ N_H^G\und{R}(G/e) =
\und{\pi}_0^G\big(\bigwedge_{G/H}
H\und{R}\big)(G/e) = \pi_0\big(\bigwedge_{G/H}
H\und{R}\big) \cong \bigotimes_{G/H} \und{R}(H/e). \] 
Here permutation actions on smash factors are visible in the
corresponding permutation action of tensor factors and an 
action by $H$ on the Eilenberg-Mac Lane spectrum of $\und{R}$
corresponds to an $H$-action on $\und{R}(H/e)$. In the following we
will therefore describe the $G$-action on norms at the free level. 

For a $G$-Tambara functor $\und{T}$ we can consider its restriction to the
trivial subgroup, $i_e^G\und{T}$. A priori, this is just an $e$-Tambara functor,
thus a  commutative ring, with 
\[ i_e^G\und{T}(e/e) = \und{T}(G \times_e e/e) \cong \und{T}(G/e). \]
But the $G$-Tambara structure of $\und{T}$ ensures that $\und{T}(G/e)$ carries
the structure of a commutative $G$-ring. This is important for the counit of
the norm-restriction adjunction
\[ \varepsilon \colon N_e^Gi_e^G\und{T} \ra \und{T}. \]
This counit map has to be a morphism of $G$-Tambara functors, in particular,
at the free level $G/e$ we have to have a $G$-map
\[ \varepsilon_e \colon N_e^Gi_e^G\und{T}(G/e) = i_e^G\und{T}(e/e)^{\otimes G}
  = \und{T}(G/e)^{\otimes G}  \ra \und{T}(G/e). \]
We may consider the diagonal action of $G$ on $N_e^Gi_e^G\und{T}$, defined such that for all $\gamma \in G$ and $t_g \in \und{T}(G/e)$: 
\begin{equation} \label{eq:diagonal} \gamma(\bigotimes_{g \in G} t_g) = \bigotimes_{\gamma g \in G}
  \gamma(t_g). \end{equation}
In contrast, the flip action is defined by
\begin{equation} \label{eq:flip} \gamma(\bigotimes_{g \in G} t_g) = \bigotimes_{\gamma g \in G}
  t_g. \end{equation}

\begin{prop} \label{prop:counit}
For $N_e^Gi_e^G\und{T}$ with the diagonal $G$-action, the counit map  
 at level $G/e$ is the multiplication $\mu$ 
 of the commutative ring $\und{T}(G/e)$.

 If we use the flip action on $N_e^Gi_e^G\und{T}$, then at level $G/e$  the
 counit map sends $\bigotimes_{g \in G} t_g$ with $t_g \in \und{T}(G/e)$ to
 $\prod_{g \in G} gt_g$. 
  
\end{prop}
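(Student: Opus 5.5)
The plan is to determine the counit at the free level from two facts. It must restrict, on each single tensor factor, to a map forced by the adjunction. It must also be a $G$-equivariant ring map $\und{T}(G/e)^{\otimes G} \ra \und{T}(G/e)$.

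First recall how the unit/counit of the norm-restriction adjunction looks at the underlying level. The restriction $i_e^G N_e^G \und{A}$ of the norm of a commutative ring $A$ is $A^{\otimes G}$. The unit $A \ra i_e^G N_e^G A$ is the inclusion of the tensor factor indexed by the identity $e \in G$, $a \mapsto a \otimes 1 \otimes \cdots \otimes 1$. By the triangle identity, the composite
\[ \und{T}(G/e) \ra \und{T}(G/e)^{\otimes G} \xrightarrow{\varepsilon_e} \und{T}(G/e), \]
with the first map including into the $e$-factor, is the identity. Hence $\varepsilon_e(t \otimes 1 \otimes \cdots \otimes 1) = t$ for $t$ in the $e$-slot.

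Next, $\varepsilon_e$ is a ring map, so $\varepsilon_e(\bigotimes_g t_g) = \prod_g \varepsilon_e(\iota_g(t_g))$, where $\iota_g$ includes into the $g$-th factor. It therefore suffices to compute $\varepsilon_e \circ \iota_g$. For this, write $\iota_g(t)$ as $\gamma$ applied to an element in the $e$-slot, using $\gamma = g$, and then use that $\varepsilon_e$ is $G$-equivariant, since $\varepsilon$ is a map of $G$-Tambara functors and $G$ acts on $\und{T}(G/e)$ via the Weyl/conjugation maps.

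For the diagonal action \eqref{eq:diagonal}, $g(\iota_e(g^{-1}t)) = \iota_g(t)$. Hence $\varepsilon_e(\iota_g t) = g\,\varepsilon_e(\iota_e(g^{-1}t)) = g g^{-1} t = t$, and $\varepsilon_e = \mu$.

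For the flip action \eqref{eq:flip}, $g(\iota_e(t)) = \iota_g(t)$. Hence $\varepsilon_e(\iota_g t) = g\,\varepsilon_e(\iota_e t) = gt$, giving $\varepsilon_e(\bigotimes t_g) = \prod_g g t_g$.

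As a consistency check, verify directly that each formula is a $G$-equivariant ring map for its respective action. For the flip action,
\[ \varepsilon_e(\gamma \cdot \textstyle\bigotimes t_g) = \prod_g (\gamma g)t_g = \gamma \prod_g g t_g. \]
This also confirms that these are the counits, since a map of Tambara functors out of $N_e^G i_e^G \und{T}$ is determined by its underlying equivariant ring map.

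The main obstacle is justifying the first step: that the unit is inclusion into the identity factor, and that the $G$-action on the norm at the free level is the stated one. This depends on the model of $N_e^G$. I would use the spectrum-level description \eqref{eq:normspectrum}, where $\bigwedge_{G/e} H\und{T}$ has $\pi_0 \cong \und{T}(G/e)^{\otimes G}$ and the unit is the inclusion of the smash factor at the identity coset. The two actions differ by the automorphism $\bigotimes t_g \mapsto \bigotimes g(t_g)$ (Hill's remark). The flip-action case can therefore also be deduced from the diagonal case by precomposing $\mu$ with this isomorphism.
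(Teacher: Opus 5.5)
Your proposal is correct and is essentially the paper's own proof. Both arguments use that the counit is adjoint to the identity, so it is the identity on the $e$-indexed tensor factor. Both then write $\bigotimes_g t_g=\prod_g g(t_g\otimes 1\otimes\cdots\otimes 1)$ for the flip action, resp.\ $\prod_g g(g^{-1}t_g\otimes 1\otimes\cdots\otimes 1)$ for the diagonal action, so that multiplicativity and $G$-equivariance of $\varepsilon_e$ give $\prod_g gt_g$, resp.\ $\prod_g t_g$.

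Your closing alternative via the isomorphism $\bigotimes_g t_g\mapsto\bigotimes_g gt_g$ is also consistent with the paper. The paper's next proposition establishes exactly the compatibility $\varepsilon^d\circ\Psi=\varepsilon^f$.
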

We denote the counit map for the diagonal action on
$N_e^Gi_e^G\und{T}$ by $\varepsilon^d$ and the one for the flip action by 
$\varepsilon^f$. 

\begin{proof}
Note that the norm-restriction adjunction ensures that maps of
$G$-Tambara functors from $N_e^Gi_e^G\und{T}$ to $\und{T}$ are in
natural bijection with maps of $e$-Tambara functors from $i_e^G\und{T}
= \und{T}(G/e)$ to 
itself. 

First we note that $\varepsilon^f$ and $\varepsilon^d$ are adjoint to the
identity map on $\und{T}(G/e)$: The identity map determines the value
on the $g=e$-tensor factor in $\bigotimes_{g \in G} \und{T}(G/e)$. If
we consider the 
norm-restriction with the flip action,  then we can write an
arbitrary element $\bigotimes_{g \in G} t_g$ as
\[ \bigotimes_{g \in G} t_g = \prod_{g \in G}g(t_g \otimes 1 \otimes
  \ldots \otimes 1)\] and hence obtain 
\[ \varepsilon^f_e(\bigotimes_{g \in G} t_g) = \prod_{g \in
    G}g\varepsilon^f(t_g \otimes 1 \otimes \ldots \otimes 1) =
  \prod_{g \in G} gt_g. \] 
In constrast, for the diagonal action we have to write $\bigotimes_{g
  \in G} t_g$ as 
\[ \bigotimes_{g \in G} t_g = \prod_{g \in G}g(g^{-1}t_g \otimes 1
  \otimes \ldots \otimes 1)\] and therefore $\varepsilon^d$ as the
adjoint to the identity map on $\und{T}(G/e)$ is 
\begin{align*}
  \varepsilon^d_e(\bigotimes_{g \in G} t_g) & =
\varepsilon^d_e(\prod_{g \in G}g(g^{-1}t_g \otimes 1 \otimes \ldots
                                              \otimes 1)) = \prod_{g
                                              \in G}g
                                              \varepsilon^d(g^{-1}t_g
                                              \otimes 1 \otimes \ldots
                                              \otimes 1) \\ 
  & = \prod_{g \in G} g(g^{-1}t_g) = \prod_{g\in G} t_g. \end{align*}

\end{proof}


\begin{rem}
The two different choices of $G$-actions on $N_e^Gi_e^G\und{T}$
correspond to two different choices of $G$-actions on $N_e^Gi_e^GA$
where $A$ is any 
commutative genuine $G$-ring spectrum, for instance $H\und{T}$.  One action just permutes the
smash factors in $\bigwedge_{G} A$ and the other one combines this
permutation action with the $G$-action on $A$. 
\end{rem}

\begin{prop} \label{prop:psi}
There is an isomorphism $\Psi$ of $G$-Tambara functors between
$N_e^Gi^G_e\und{T}$ with the flip $G$-action and $N_e^Gi^G_e\und{T}$ with
the diagonal $G$-action, such that the diagram
\[\xymatrix{
N_e^Gi^G_e\und{T} \ar[rd]_{\varepsilon^f}\ar[rr]^\Psi& & N_e^Gi^G_e\und{T} \ar[dl]^{\varepsilon^d} \\
& \und{T} &   }\]
commutes.  
\end{prop}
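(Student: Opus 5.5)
The idea is to write $\Psi$ down at the free level as the untwisting map
\[ \Psi_e\Big(\bigotimes_{g \in G} t_g\Big) = \bigotimes_{g \in G} g(t_g), \]
that is, apply $g$ to the $g$-th tensor factor. Its inverse is $\bigotimes_g t_g \mapsto \bigotimes_g g^{-1}(t_g)$. Because each $g$ acts on $\und{T}(G/e)$ by ring automorphisms, $\Psi_e$ is a ring isomorphism of $\und{T}(G/e)^{\otimes G}$.

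The first step is $G$-equivariance from the flip to the diagonal action. Take $\gamma \in G$. Under the flip action, $\gamma(\bigotimes_g t_g)$ has $t_g$ sitting in slot $\gamma g$. Applying $\Psi_e$ puts $(\gamma g)(t_g)$ in slot $\gamma g$. On the other side, $\Psi_e(\bigotimes_g t_g)$ has $g(t_g)$ in slot $g$. The diagonal action of $\gamma$ then puts $\gamma(g(t_g)) = (\gamma g)(t_g)$ in slot $\gamma g$. The two agree. Compatibility with the counits is then immediate from Proposition \ref{prop:counit}:
\[ \varepsilon^d_e\Psi_e\Big(\bigotimes_g t_g\Big) = \prod_g g(t_g) = \varepsilon^f_e\Big(\bigotimes_g t_g\Big). \]

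The main obstacle is promoting this free-level map to an isomorphism of $G$-Tambara functors. Here I would avoid explicit formulas at the other levels $G/K$ and use the norm–restriction adjunction. The norm $N_e^G$ is left adjoint to restriction, so both versions of $N_e^G i_e^G\und{T}$ are free on the commutative ring $\und{T}(G/e)$, and they differ only in which $G$-Tambara structure is used. The clean way to say this: both are $N_e^G$ applied to the underlying ring $R = \und{T}(G/e)$, and $g \in G$ acts either purely by permuting factors, or by permuting factors combined with the automorphism $g$ of $R$. The diagonal-action object is the norm of $R$ regarded as a ring with the trivial action, twisted by the isomorphisms $g \colon R \to R$. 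The map $\Psi$ is the $G$-Tambara map corresponding under the adjunction to $\id_R$, sent into the $e$-factor of the diagonal-action norm. So I define $\Psi$ as the adjoint of the unit inclusion $\und{T}(G/e) \to (\text{diagonal } N_e^Gi_e^G\und{T})(G/e)$ into the $e$-th factor.

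Three checks remain.

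1. The adjoint has the free-level formula above. The adjoint is determined by $\Psi_e(t \otimes 1 \otimes \cdots \otimes 1) = t \otimes 1 \otimes \cdots \otimes 1$ together with multiplicativity and flip-equivariance, exactly as in the proof of Proposition \ref{prop:counit}. Write $\bigotimes_g t_g = \prod_g g(t_g \otimes 1 \otimes \cdots)$ in the flip action; this is sent to $\prod_g g(t_g \otimes 1 \otimes \cdots)$ computed in the diagonal action, which is $\bigotimes_g g(t_g)$.

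2. $\Psi$ is an isomorphism. Build the symmetric map $\Psi'$ from the diagonal to the flip version in the same way, as the adjoint of the $e$-th-factor inclusion. At level $G/e$ the composites are the identity. Both composites are Tambara endomorphisms adjoint to the identity of $\und{T}(G/e)$, so by uniqueness in the adjunction they equal the identity at every level.

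3. The triangle commutes. By the adjunction, $\varepsilon^d \circ \Psi$ and $\varepsilon^f$ are both $G$-Tambara maps $N_e^Gi_e^G\und{T} \to \und{T}$ (flip version). They are adjoint to the same map, namely $\id$ on $\und{T}(G/e)$, since restricting to the $e$-th factor gives $t \mapsto t$ for both. Hence they agree on all levels, not just at $G/e$.
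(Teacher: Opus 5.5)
Your proposal is correct and follows essentially the same route as the paper: $\Psi$ is defined as the adjoint of the inclusion of $\und{T}(G/e)$ into the $e$-th tensor factor, the free-level formula $\bigotimes_g t_g \mapsto \bigotimes_g g(t_g)$ is read off from $\bigotimes_g t_g = \prod_g g(t_g \otimes 1 \otimes \cdots \otimes 1)$, and the triangle is checked on adjoints. Your inverse-via-adjunction argument goes slightly beyond the paper's ``visibly an isomorphism'', which only looks at $G/e$. Note that it relies on the diagonal-action version also being free on $\und{T}(G/e)$ with unit the $e$-th-factor inclusion; the paper makes the same assumption tacitly in Proposition~\ref{prop:counit}.
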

\begin{proof}
By adjunction,  maps of $G$-Tambara functors from
$N_e^Gi^G_e\und{T}$ to itself are in natural bijection with morphisms
of $e$-Tambara functors from $i_e^G\und{T} = \und{T}(G/e)$ to
$i_e^GN_e^Gi_e^G\und{T} = \bigotimes_{g \in G}\und{T}(G/e)$.

We define the adjoint of $\Psi$,
\[ \mathrm{ad}(\Psi) \colon \und{T}(G/e) \ra \bigotimes_{g \in
    G}\und{T}(G/e)\] as
\[ \mathrm{ad}(\Psi)(t) := \bigotimes_{g \in G} t_g \text{ with } t_g
  = \begin{cases} t, & \text{ if } g=e, \\ 1, &
                                    \text{otherwise}. \end{cases} \]   
On the level of adjoints we have to check that the composite
$i_e^G\varepsilon^d \circ \mathrm{ad}(\Psi)$ 
is the identity map which is the adjoint of $\varepsilon^f$, and indeed
\[ i_e^G\varepsilon^d \circ \mathrm{ad}(\Psi)(t) =
  i_e^G\varepsilon^d(t \otimes 1 \otimes \ldots \otimes 1) = t. \] 
The map $\Psi$ itself is then given at $G/e$  by 
  \begin{align*}
    \Psi(\bigotimes_{g \in G} t_g) & = \Psi(\prod_{g \in G} g(t_g \otimes 1 \otimes \ldots \otimes 1)) \\
                                  & = \prod_{g \in G} g \mathrm{ad}(\Psi)(t_g) \\
                                  & = \prod_{g \in G} g(t_g \otimes 1
                                    \otimes \ldots \otimes 1) \\ 
    & = \bigotimes_{g \in G} gt_g. 
\end{align*}
This is visibly an isomorphism.
\end{proof}

As $N_e^Gi_e^G\und{T}$ with the flip action is isomorphic to
$N_e^Gi_e^G\und{T}$ with the diagonal action and as this isomorphism
is compatible with the counit maps we can deduce the following
result. Here, $S^\sigma$ is a simplicial model of the $1$-point
compactification of the real sign representation with $(S^\sigma)_0 =
C_2/C_2 \sqcup C_2/C_2$ and $(S^\sigma)_1 =   C_2/C_2 \sqcup C_2/e
\sqcup C_2/C_2$:
\[\xymatrix@R=0.5cm{& \bullet \\ S^\sigma = &  \\ & \bullet \ar@/^5ex/[uu] \ar@/_5ex/[uu]}\]
\begin{prop}
  Let $\und{R}$ be a $C_2$-Tambara functor, then there is an isomorphism of
  simplicial $C_2$-Tambara functors 
  \[ \cL_{S^\sigma}^{C_2,f}(\und{R}) \cong \cL_{S^\sigma}^{C_2}(\und{R})\]
  where on the left hand side we consider the flip action on
  norm-restriction terms and on the right hand side we consider the
  diagonal action as in \cite{lrz-gloday}. 
\end{prop}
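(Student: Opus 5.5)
The plan is to build the isomorphism levelwise, using Proposition \ref{prop:psi} on the free summands, and then check that it commutes with the simplicial structure maps.

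In each simplicial degree $n$, the $n$-simplices of $S^\sigma$ decompose into orbits. There are $n+2$ simplices in total: the two constant sequences at the two vertices are fixed by $C_2$, and the remaining $n$ simplices form orbits of type $C_2/e$, which are swapped by the generator $\tau$. Counting orbits, $(S^\sigma)_n \cong C_2/C_2 \sqcup C_2/C_2 \sqcup \bigsqcup_{k} C_2/e$, with one $C_2/e$ summand for each free orbit. Hence in both versions
\[ \cL_{S^\sigma}(\und{R})_n \cong \und{R} \Box \und{R} \Box \bigbox_{k} N_e^{C_2} i_e^{C_2} \und{R}. \]
The two versions differ only in the $C_2$-action chosen on the norm-restriction factors. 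I will define the levelwise map $\Phi_n$ as the identity on the two $\und{R}$ factors and as $\Psi$ from Proposition \ref{prop:psi} on each $N_e^{C_2}i_e^{C_2}\und{R}$ factor. At the free level $\Psi$ is $a\otimes b \mapsto a \otimes \tau b$, once an identification of each free orbit with $C_2$ is fixed. Each $\Phi_n$ is then an isomorphism of $C_2$-Tambara functors, since $\Psi$ is one and $\Box$ is functorial.

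The main step is to show that the $\Phi_n$ commute with the face and degeneracy maps. These maps are induced by $C_2$-maps of finite $C_2$-sets between orbits, and they come in three types.
\begin{enumerate}
\item Maps between the two fixed points are identities and pose no problem.
\item Free-to-free maps are $C_2$-isomorphisms $C_2/e \to C_2/e$, given by $1$ or by $\tau$. They induce on $N_e^{C_2}i_e\und{R}$ either the identity or the action of $\tau$, and $\Psi$ commutes with both because $\Psi$ is $C_2$-equivariant.
\item Free-to-fixed maps (faces of free edges landing on a vertex) induce the counit $\varepsilon \colon N_e^{C_2}i_e\und{R} \to \und{R}$ followed by multiplication into the corresponding $\und{R}$ factor. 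Here the flip construction uses $\varepsilon^f$ and the diagonal one uses $\varepsilon^d$, and Proposition \ref{prop:psi} gives exactly $\varepsilon^d\circ\Psi = \varepsilon^f$.
\end{enumerate}

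I expect the main obstacle to be the bookkeeping of how each free orbit is identified with $C_2/e$, that is, which tensor factor counts as the ``$e$''-factor. Under $d_i$ or $s_j$, a free orbit may map to another free orbit with the roles of $e$ and $\tau$ exchanged. I must check that the map $\Psi$ defined with respect to the chosen basepoints transforms correctly. It is $C_2$-equivariant, and changing the basepoint conjugates the map by $\tau$, so using the equivariance of $\Psi$ should settle this. It is enough to verify these compatibilities at the free level $C_2/e$, because maps of Tambara functors out of a norm are determined there by adjunction.
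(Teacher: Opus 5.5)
Your proposal is correct and is essentially the paper's argument. The paper identifies both sides with the two-sided bar construction $B(\und{R}, N_e^{C_2}i_e^{C_2}\und{R}, \und{R})$, with the respective actions, and takes $B(\id,\Psi,\id)$; this is exactly your levelwise $\Phi_n$, justified by $\Psi$ being compatible with the augmentations ($\varepsilon^d\circ\Psi=\varepsilon^f$), the multiplication and the units.

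Two small slips. First, $(S^\sigma)_n$ has $2n+2$ simplices forming $n+2$ orbits, not $n+2$ simplices. Second, your case list should also include fold maps of free orbits, which induce multiplication, and the unit insertions coming from degeneracies. Both are handled because $\Psi$ is a morphism of Tambara functors.
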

\begin{proof}
  In \cite[(7.4)]{lrz-gloday} we identified $\cL_{S^\sigma}^{C_2}(\und{R})$ with
  the two-sided bar construction $B(\und{R}, N_e^{C_2}i_e^{C_2}\und{R}, \und{R})$
  where the norm-restriction term carries the diagonal
  action. However, the isomorphism $\Psi$ from Proposition
  \ref{prop:psi} induces an isomorphism of simplicial $C_2$-Tambara functors
  \[ B(\id, \Psi, \id) \colon  B(\und{R}, N_e^{C_2}i_e^{C_2}\und{R},
    \und{R}) \ra B(\und{R}, N_e^{C_2}i_e^{C_2}\und{R}_d, \und{R})\]
  when in the left bar construction we use the flip action on
  $N_e^{C_2}i_e^{C_2}\und{R}$ and in the right bar construction we use
  the diagonal action 
  because $\Psi$ is compatible with the augmentation maps, the multiplication on $N_e^{C_2}i_e^{C_2}\und{R}$ and the unit maps. 
\end{proof}

The above result ensures that in the particular case of $S^\sigma$ the
choice of $C_2$-action on the norm-restriction terms does not
matter. However, in other examples, for instance an $S^1$ with a 
$C_n$-rotation action, the choice \emph{does} make a difference, see
for instance Example \ref{ex:twistedcyclicnerve}.

Let $\und{R}$ be an $H$-Tambara functor. On $N_H^G\und{R} =
\und{\pi}_0^G(\bigwedge_{G/H} H\und{R})$ one considers the
tensor-induction action. This is explicitly described for instance in
\cite[pp.~40,41]{cary} . 
For any choice of transversal of $H$ in $G$, $g_1,
\ldots, g_n$ and for every $g$ and all $1 \leq i \leq n$ in $G$
there is a unique $1 \leq j_i \leq n$ with  $g_{j_i}^{-1}gg_i \in H$
and the action is  
given by 
\begin{equation}\label{eqn:HHR}
 g \big( \bigotimes_{g_iH} r_i \big)
  = \bigotimes_{g_{j_i} H} (g_{j_i}^{-1}gg_i) \cdot r_i. 
  \end{equation}
  This comes from the decomposition of $G$ as
  $G=\bigsqcup _{g_iH\in G/H} g_iH$ and
    the observation that if $g_{j_i}^{-1}gg_i \in H$, $g\cdot g_i h =
    g_{j_i} (g_{j_i}^{-1}gg_i h) \in g_{j_i}H$.  Note that if we pick a different
    transversal, there is an explicit isomorphism between
    the norm constructed using the first transversal and the norm
    constructed using the second transversal.   

For any subgroup $e < H < G$ and with the tensor induction action on
$N_e^Gi_e^H\und{R}$,
the flip action on $N_e^Hi_e^H\und{R}$ and the tensor induction action on
$N_H^G$ there is an isomorphism of $G$-Tambara functors 
\begin{equation} \label{eq:xi}
  \xi \colon N_e^Gi_e^H\und{T} \cong N_H^GN_e^Hi_e^H\und{T}.
\end{equation}
On the level of spectra this isomorphism is induced by reordering smash factors
and evaluated on the free orbit it sorts the $G$-fold tensor product
of $\und{T}(G/e)$ into $\bigotimes_{G/H}\bigotimes_H \und{T}(G/e)$. As
we don't consider the diagonal action, the map $\xi$ is equivariant.

\begin{ex}
  Let us make explicit why we cannot take the diagonal action on
  $N_e^Hi_e^H\und{R}$: 
  
Let $G$ be $\Sigma_3$ and let $H$ be the subgroup generated by the 
transposition $\tau=(1,2)$. We choose as coset representatives for $H$ in
$\Sigma_3$ the elements  $g_1 = \id, g_2 = (1,3), g_3 = (2,3)$. 

At the free level, $\xi_e$ is a map
\[ \xi_e \colon (N_e^{\Sigma_3}i_e^H\und{R})(\Sigma_3/e) =
  \bigotimes_{\Sigma_3} \und{R}(H/e) 
  \ra \bigotimes_{\id, (1,3), (2,3)} \ \ \bigotimes_{\id, (1,2)} \und{R}(H/e) = (N_H^{\Sigma_3}N_e^Hi_e^H\und{R})(\Sigma_3/e) \]  
and it sends a generator $r_{id} \otimes r_{(1,2)} \otimes r_{(1,3)}
\otimes r_{(2,3)} \otimes r_{(1,2,3)} \otimes r_{(1,3,2)}$ to
\[ \xi_e(r_{id} \otimes r_{(1,2)} \otimes r_{(1,3)}
\otimes r_{(2,3)} \otimes r_{(1,2,3)} \otimes r_{(1,3,2)}) = (r_{id} \otimes r_{(1,2)}) \otimes (r_{(1,3)}
\otimes r_{(1,2,3)}) \otimes (r_{(2,3)} \otimes r_{(1,3,2)}). \]
With the diagonal action on $N_e^Hi_e^H\und{R}(H/e)$ inside the term  $N^{\Sigma_3}_HN_e^Hi_e^H\und{R}(H/e)$ for instance
$(2,3) \circ \xi_e$ would yield
\begin{equation}
\label{eq:1}
(r_{(2,3)} \otimes r_{(1,3,2)}) \otimes (\overline{r}_{(1,2,3)} \otimes \overline{r}_{(1,3)}) \otimes (r_{id} \otimes r_{(1,2)})
\end{equation}
where $\overline{a} = \tau(a)$.

On the other hand, first applying $(2,3)$ and then $\xi_e$ would give
\[ \xi_e \circ (2,3)(r_{id} \otimes r_{(1,2)} \otimes r_{(1,3)}
\otimes r_{(2,3)} \otimes r_{(1,2,3)} \otimes r_{(1,3,2)}) = 
(r_{(2,3)} \otimes r_{(1,3,2)}) \otimes (r_{(1,2,3)} \otimes r_{(1,3)}) \otimes (r_{id} \otimes r_{(1,2)})\]
because $i^H_e\und{R}$ does not carry any $H$-action.

Hence if we use the diagonal action, then the map $\xi_e$ is  not
equivariant and hence $\xi$ is not an isomorphism of $G$-Tambara
functors.

\medskip 
Even if we tried
to remember the $H$-action on $i_e^H\und{R}$, by sending $r_\sigma$ in
component $\sigma$ to $r_{(2,3)\sigma}$ in component $\sigma$ and then
to itself, if $(2,3)\sigma = g_i$ and to
$\overline{r}_{(2,3)\sigma}$ if $(2,3)\sigma = g_i (1,2)$ for some
representative $g_i$, then we would get 
\[ (2,3)r_{id} \otimes r_{(1,2)} \otimes r_{(1,3)}
\otimes r_{(2,3)} \otimes r_{(1,2,3)} \otimes r_{(1,3,2)} = r_{(2,3)}
\otimes \overline{r}_{(1,3,2)} \otimes \overline{r}_{(1,2,3)} 
\otimes r_{\id} \otimes r_{(1,3)} \otimes \overline{r}_{(1,2)} \]
and applying $\xi_e$ then yields
\[(r_{(2,3)} \otimes \overline{r}_{(1,3,2)}) \otimes (\overline{r}_{(1,2,3)} \otimes r_{(1,3)}) \otimes
  (r_{id} \otimes \overline{r}_{(1,2)}),\]
which is different from \eqref{eq:1}.
\end{ex}

For our Loday construction for an $H$-Tambara functor later, we want to 
assign $N_H^G\und{R}$ to the orbit $G/H$. Therefore we have to show
that equivariant self-maps of $G/H$ induce well-defined self-maps on
$N_H^G\und{R}$.  

\begin{lem}\label{lem:Weyl}
  Let $H<G$ be a subgroup and let $\und{R} $ be an $H$-Tambara functor.  Then the $G$-Tambara functor $N_H^G\und{R}$ carries a $W_G(H)$-action. This action is
  natural in the $H$-Tambara functor $\und{R}$. 
\end{lem}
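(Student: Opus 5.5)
The plan is to construct the action from the identification $W_G(H) = N_G(H)/H \cong \mathrm{Aut}_G(G/H)$. An element $n \in N_G(H)$ gives the $G$-equivariant self-map $R_n \colon G/H \to G/H$, $gH \mapsto gnH$, and $R_{nh} = R_n$ for $h \in H$. I will describe $N_H^G\und{R}$ through the spectrum model \eqref{eq:normspectrum}, namely $N_H^G\und{R} = \und{\pi}_0^G(\bigwedge_{G/H} H\und{R})$. Here $G$ acts by the tensor-induction formula \eqref{eqn:HHR}, once a transversal $g_1, \ldots, g_k$ of $G/H$ is fixed.

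For $n \in N_G(H)$ I define a self-map $\rho_n$ of the indexed smash product $\bigwedge_{G/H} H\und{R}$. It moves the factor indexed by $g_iH$ to the factor indexed by $g_inH = g_{j}H$, where $j$ is determined by $g_in = g_j h_i$ with $h_i \in H$. At the same time it acts on that factor by $h_i^{\pm 1}$, using the $H$-action on $H\und{R}$. On the free level this reads
\[ \rho_n\big(\bigotimes_{g_iH} r_i\big) = \bigotimes_{g_inH} h_i \cdot r_i . \]
This is a reordering twisted by $H$ exactly as in \eqref{eqn:HHR}, but now by right multiplication with $n$ instead of left multiplication with $g$. Because left and right multiplication commute, $\rho_n$ commutes with the $G$-action. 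So $\rho_n$ is a map of commutative $G$-ring spectra, and applying $\und{\pi}_0^G$ gives a map of $G$-Tambara functors $N_H^G\und{R} \to N_H^G\und{R}$. On the free level this is immediate; at the other levels it follows because the map comes from a $G$-map of spectra, so I avoid checking compatibility with restrictions, transfers and norms by hand.

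It then remains to check three things. First, $\rho_{nh} = \rho_n$ for $h \in H$, so that $\rho$ descends to $W_G(H)$. Second, $\rho_{nn'} = \rho_n \circ \rho_{n'}$, or the opposite order, after which one passes to inverses to get a left action. Third, the construction is natural in $\und{R}$: a map $f \colon \und{R} \to \und{R}'$ of $H$-Tambara functors gives an $H$-map $Hf$ of $H$-ring spectra, and $\bigwedge_{G/H} Hf$ commutes with each $\rho_n$, since $\rho_n$ only permutes factors and acts on them by $H$-elements, with which $Hf$ commutes. Independence of the transversal is handled by the explicit comparison isomorphism mentioned after \eqref{eqn:HHR}, which commutes with the $\rho_n$ by the same cocycle computation.

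I expect the main obstacle to be the cocycle bookkeeping for $\rho_{nh} = \rho_n$ and for multiplicativity. Replacing $n$ by $nh$ leaves the permutation $g_iH \mapsto g_inH$ unchanged but replaces each $h_i$ by $h_ih$, so the map changes by the diagonal action of $h$ on every factor. I would therefore either check that this twist is trivial on $\pi_0$ of the $G$-fixed points, or, more cleanly, define $\rho_n$ via the adjunction: $N_H^G$ is left adjoint to $i_H^G$, and $\rho_n$ is adjoint to the composite of $c_n \colon \und{R} \to c_n^*\und{R}$ with the inclusion of the tensor factor at $nH$ in $i_H^G N_H^G\und{R}$, which is a map of $H$-Tambara functors because $n$ normalizes $H$. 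If $h \in H$, this adjoint is the unit map composed with the action of $h \in H$ on $i_H^G N_H^G \und{R}$, so it is adjoint to the identity. Composition and naturality then follow formally from the adjunction. I would present the adjunction argument and use the explicit formula only as an illustration.
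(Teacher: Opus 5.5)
You follow the same outline as the paper: relabel tensor factors by right multiplication, check $G$-equivariance against \eqref{eqn:HHR}, pass through $\bigwedge_{G/H}H\und{R}$ and $\und{\pi}_0^G$, and get naturality from the adjunction. But the key step, ``left and right multiplication commute, so $\rho_n$ commutes with the $G$-action'', fails. Commuting multiplications match up the cosets, not the $H$-labels. Suppose $gg_l=g_iu$ with $u\in H$ and $g_in=g_jh_i$. Then $\rho_n(g\cdot -)$ puts the factor $r_l$ at $g_jH$ with label $h_iu$, while $g\cdot\rho_n(-)$ puts it at the same place with label $h_i\,n^{-1}un$. So in general $\rho_n$ is equivariant only when $n$ centralizes $H$.

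Here is a concrete counterexample. Take $H=\langle r\rangle\lhd D_8=G$ with transversal $\{e,s\}$. Then \eqref{eqn:HHR} gives $r(a\otimes b)=ra\otimes r^{-1}b$, and your formula gives $\rho_s(a\otimes b)=b\otimes a$. Hence $\rho_s(r(a\otimes b))=r^{-1}b\otimes ra\neq rb\otimes r^{-1}a=r\rho_s(a\otimes b)$ as soon as $r^2$ acts nontrivially on $\und{R}(H/e)$. The other representatives $sr^k$ fail in the same way.

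Your adjunction fallback hides the same problem. For $n\notin H$, an $H$-Tambara functor has no map $c_n\colon\und{R}\to c_n^*\und{R}$. The relabelling map actually lands in $N_H^G(c_n^*\und{R})$, which is exactly what Lemma \ref{lemma:switchingtoconjugates} produces for $\gamma=n^{-1}$. The descent step is also wrong. You have $\rho_{nh}=\rho_n\circ\delta_h$ with $\delta_h$ the diagonal action, and this twist need not vanish. Already for $G=H=C_2$, your $\rho_s$ is $c_s$ on $\und{R}(C_2/e)$ and the identity on $\und{R}(C_2/C_2)$. So it is trivial on $G$-fixed points, yet it is neither the identity nor adjoint to the identity, although $W_G(H)=1$.

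The paper's proof has the same gap. For the transversal $(g_i\gamma^{-1})$, the label prescribed by \eqref{eqn:HHR} is $\gamma g_{k_i}^{-1}gg_i\gamma^{-1}$, not $g_{k_i}^{-1}gg_i$ as written. Its map is your $\rho_{\gamma^{-1}}$ in a shifted transversal, so its equivariance check is only valid for $\gamma\in C_G(H)$.

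What is missing is extra structure on $\und{R}$. The correct right-multiplication analogue of \eqref{eqn:HHR} moves the factor at $g_iH$ to $g_inH=g_jH$ and acts on it by $g_j^{-1}g_i=h_in^{-1}\in N_G(H)$; that is, your formula with an extra $n^{-1}$. Suppose $\und{R}$ is restricted from an $N_G(H)$-Tambara functor, for instance $\und{R}=i_H^G\und{T}$ as in the ordinary Loday construction. Then the same index computation as for \eqref{eqn:HHR} gives equivariance, $\rho_h=\id$ for $h\in H$, and $\rho_n\rho_{n'}=\rho_{n'n}$. With that hypothesis your adjunction and naturality arguments go through. For a bare $H$-Tambara functor, your construction only yields an action of $C_G(H)$ in which $Z(H)$ acts diagonally, not an action of $W_G(H)$.

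Theorem \ref{thm:RealHH} is unaffected, since only unit inclusions, fold maps, identities and standard projections occur in $P_{2m}$.
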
  
\begin{proof}
Let
$\gamma \in N_G(H)$ and let $[\gamma] \in W_G(H)$ be its Weyl class.   
We choose a transversal $(g_1,\ldots, g_n)$ for $H$ in $G$. Note that
$(g_1\gamma^{-1}, \ldots, g_n\gamma^{-1})$ is then also a transversal
for $H$ in $G$. 

The Weyl group $W_G(H)$ acts on $G/H$ by sending a coset $gH$ to
\[ [\gamma](gH) = g\gamma^{-1}H. \]
This induces an action on 
\[ N_H^G\und{R}(G/e) = \bigotimes_{g_iH} \und{R}(H/e)\]
by sending an element $\bigotimes_{g_iH} r_i \in \bigotimes_{g_iH}
\und{R}(H/e)$ to
\[ [\gamma](\bigotimes_{g_iH} r_i) = \bigotimes_{g_i\gamma^{-1}H}
  r_i. \]
We show that this action is $G$-equivariant with respect to the tensor
induction action from \eqref{eqn:HHR}, \ie, that the following
diagram commutes for all $g \in G$ and $[\gamma] \in W_G(H)$:
\[ \xymatrix{
\bigotimes_{g_iH} \und{R}(H/e) \ar[rr]^{[\gamma]} \ar[d]_g & &
\bigotimes_{g_i\gamma^{-1}H} \und{R}(H/e) \ar[d]^g\\
\bigotimes_{g_iH} \und{R}(H/e) \ar[rr]^{[\gamma]} && \bigotimes_{g_i\gamma^{-1}H}
\und{R}(H/e)   } \]
By definition
\[ [\gamma]\Big(g\Big(\bigotimes_{g_iH} r_i\Big)\Big) =
  [\gamma]\Big(\bigotimes_{g_{j_i}H} (g_{j_i}^{-1}gg_i)r_i\Big) = 
\bigotimes_{g_{j_i}\gamma^{-1}H} (g_{j_i}^{-1}gg_i)r_i. \]
Here, $j_i$ is the unique index such that $gg_iH = g_{j_i}H$. On the
other hand
\[ g\Big([\gamma]\Big(\bigotimes_{g_iH} r_i\Big)\Big) =
  g\Big(\bigotimes_{g_i\gamma^{-1}H} r_i\Big) =
  \bigotimes_{g_{k_i}\gamma^{-1}H} (g_{k_i}^{-1}gg_i)r_i, \] 
where $k_i$ is the unique index concerning the transversal
$(g_1\gamma^{-1}, \ldots, g_n\gamma^{-1})$ such that $gg_i\gamma^{-1}H =
g_{k_i}\gamma^{-1}H$. But then $j_i = k_i$.  

As the Weyl-group action is $G$-equivariant on $N_H^G(\und{R})(G/e)$ we
obtain that on spectrum level this corresponds to  a $G$-equivariant Weyl-group
action on $N_H^GH\und{R}$ and this in turn induces a well-defined
Weyl-group action on the $G$-Tambara functor
$\und{\pi}_0^GN_H^G(H\und{R}) = N_H^G(\und{R})$. 

For the claim about naturality assume that $f \colon \und{S} \ra \und{R}$ is a
morphism of $H$-Tambara functors. We have to show that for all $[\gamma] \in
W_G(H)$ the diagram
\[\xymatrix{N_H^G \und{S} \ar[rr]^{[\gamma]} \ar[d]_{N_H^G(f)} & &
    N_H^G \und{S} \ar[d]^{N_H^G(f)}\\
  N_H^G \und{R} \ar[rr]^{[\gamma]}  & &
    N_H^G \und{R} }   \]
commutes. Again, we use the norm-restriction adjunction, so the above
requirement translates to the commutativity of the diagram
\[ \xymatrix{ \und{S} \ar[rr]^{ad[\gamma]} \ar[d]_f& & i_H^GN_H^G\und{S}
\ar[d]^{i_H^GN_H^G f} \\    
\und{R} \ar[rr]^{ad[\gamma]} & & i_H^GN_H^G\und{R}. } \]
John Ullman shows \cite[Theorem 5.2]{ullman} that maps between $H$-Tambara
functors correspond bijectively to the maps between the corresponding
Eilenberg-MacLane spectra. Therefore, we can check the above commutativity on
spectrum level and for this it is enough to check the claim for Tambara
functors at the free level. 

Evaluated at $H/e$ the above diagram is
\[ \xymatrix{ \und{S}(H/e) \ar[rr]^{ad[\gamma]} \ar[d]_{f(H/e)} & &
    \bigotimes_{g_iH} \und{S}(H/e)
\ar[d]^{(i_H^GN_H^G f)(H/e)} \\    
\und{R}(H/e) \ar[rr]^{ad[\gamma]} & & \bigotimes_{g_iH}\und{R}(H/e) } \]
for a choice of a transversal $(g_1,\ldots, g_n)$ of $H$ in $G$ with $g_1=e$
and  where $ad[\gamma]$ maps an $s \in \und{S}(H/e)$ to the tensor product
$\bigotimes_{g_i\gamma^{-1}H} s_{g_i\gamma^{-1}}$ with
\[ s_{g_i\gamma^{-1}} = \begin{cases} s, & g_i = g_1 \\ 1, & g_i \neq g_1.
  \end{cases} \] 
Then the commutativity of the diagram follows by direct inspection.

    \end{proof}

\bigskip 

It turns out that the Weyl group action in Lemma \ref{lem:Weyl} comes from the restriction
to $N_G(H)$ of a $G$-action because $G$ acts by conjugation on the collection of conjugates of a subgroup $H$.  The normalizer
subgroup $N_G(H)$ fixes $H$ in this action, and then acts on $G/H$; in this $N_G(H)$-action, $H$ acts trivially on $G/H$.
Similarly, $\gamma N_G(H) \gamma^{-1}$ fixes $\gamma H\gamma^{-1}$ and then acts on $G/\gamma H\gamma^{-1}$; in this $\gamma N_G(H) \gamma^{-1}$-action, $\gamma H\gamma^{-1}$ acts trivially on $G/\gamma H\gamma^{-1}$.  This can be upgraded to our Tambara construction:

\begin{lem}\label{lemma:switchingtoconjugates}
   Let $H<G$ be a subgroup and let $\und{R} $ be an $H$-Tambara functor.  Then any $\gamma\in G$ defines an isomorphism of $G$-Tambara functors 
   \[N_H^G\und{R} \to N_{\gamma H\gamma^{-1}}^G( c_{\gamma^{-1}}^*(\und{R})).\]
     Here, $c_{\gamma^{-1}}$ is the conjugation automorphism of $G$, $c_{\gamma^{-1}}(g)= \gamma^{-1} g \gamma$ for all $g\in G$, and so $H= c_{\gamma^{-1}}(\gamma H\gamma^{-1})$, and the pullback of Tambara functor is as described in Lemma \ref{lem:varphir}.
\end{lem}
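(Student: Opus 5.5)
Set $H' = \gamma H\gamma^{-1}$ and $\varphi = c_{\gamma^{-1}}|_{H'} \colon H' \ra H$. By Lemma \ref{lem:varphir}, $\varphi^*\und{R} = c_{\gamma^{-1}}^*(\und{R})$ is an $H'$-Tambara functor with $(\varphi^*\und{R})(H'/K') = \und{R}(H/\gamma^{-1}K'\gamma)$. In particular its underlying ring is $(\varphi^*\und{R})(H'/e) = \und{R}(H/e)$, and $h' \in H'$ acts there as $\gamma^{-1}h'\gamma \in H$. I will first build the isomorphism at the level of genuine spectra, and then apply $\und{\pi}_0^G$, using \eqref{eq:normspectrum}.

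The model is the map of $G$-sets $G/H \ra G/H'$, $gH \mapsto g\gamma^{-1}H'$. This is well defined because $gh\gamma^{-1} = g\gamma^{-1}(\gamma h\gamma^{-1})$ with $\gamma h \gamma^{-1} \in H'$. It is a $G$-isomorphism, with inverse $g'H' \mapsto g'\gamma H$.

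Fix a transversal $(g_1,\ldots,g_n)$ of $H$ in $G$. Then $(g_1\gamma^{-1},\ldots,g_n\gamma^{-1})$ is a transversal of $H'$ in $G$, exactly as in the proof of Lemma \ref{lem:Weyl}. At the free level define
\[ \Phi_e \colon \bigotimes_{g_iH}\und{R}(H/e) \ra \bigotimes_{g_i\gamma^{-1}H'}(\varphi^*\und{R})(H'/e), \qquad \bigotimes_{g_iH} r_i \mapsto \bigotimes_{g_i\gamma^{-1}H'} r_i. \]
This is simply a relabelling of tensor factors, so it is visibly a ring isomorphism.

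The key step is $G$-equivariance with respect to the tensor induction actions \eqref{eqn:HHR}. For $g \in G$, let $j_i$ be the index with $g_{j_i}^{-1}gg_i \in H$. Then $(g_{j_i}\gamma^{-1})^{-1} g (g_i\gamma^{-1}) = \gamma(g_{j_i}^{-1}gg_i)\gamma^{-1} \in H'$, so the same index $j_i$ works for the new transversal. This element acts on $\varphi^*\und{R}(H'/e) = \und{R}(H/e)$ through $\varphi$, that is, as $g_{j_i}^{-1}gg_i$. Hence both sides of the equivariance square produce the factor $(g_{j_i}^{-1}gg_i)r_i$ in slot $j_i$, and the square commutes. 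This computation is the same as the one in the proof of Lemma \ref{lem:Weyl}.

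The main obstacle is to upgrade this free-level identification to an isomorphism of $G$-Tambara functors, rather than just of $G$-rings at $G/e$. I would argue on spectrum level. The genuine $H'$-spectrum $\varphi^*H\und{R}$ is an Eilenberg--Mac Lane spectrum with $\und{\pi}_0^{H'} = \varphi^*\und{R}$ by Lemma \ref{lem:pi0}, so we may take $H(\varphi^*\und{R}) = \varphi^*H\und{R}$. The indexed smash product $\bigwedge_{G/H}H\und{R}$ depends only on the $G$-set $G/H$ together with the identification of the stabilizer data. Under the $G$-isomorphism $G/H\cong G/H'$ this identification becomes precisely $\varphi^*H\und{R}$, because the stabilizer of $\gamma^{-1}H'$ is $\gamma^{-1}H'\gamma = H$. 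Indexed smash products are functorial in isomorphisms of such data, which gives a $G$-equivalence
\[ \bigwedge_{G/H}H\und{R} \simeq \bigwedge_{G/H'}\varphi^*H\und{R} \]
of commutative $G$-ring spectra, inducing $\Phi_e$ on underlying $\pi_0$. Applying $\und{\pi}_0^G$ yields the desired isomorphism of $G$-Tambara functors. As a check, or an alternative route, one can use the norm–restriction adjunction: the map corresponds to the composite $\und{R} \ra i_H^G N_{H'}^G\varphi^*\und{R}$ obtained from the unit of $\varphi^*\und{R}$ and the conjugation isomorphism $c_\gamma$ relating $i_H^G$ and $i_{H'}^G$. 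Its inverse arises symmetrically, using $\gamma^{-1}$ and the Remark following Lemma \ref{lem:varphir}.
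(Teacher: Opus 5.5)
Your proposal is correct and matches the paper's proof: the same shifted transversal $(g_1\gamma^{-1},\ldots,g_n\gamma^{-1})$, the same relabelling of tensor factors at the free level, and the same equivariance check (the indices agree, and the $c_{\gamma^{-1}}^*$-twisted action recovers $g_{j_i}^{-1}gg_i$), followed by passage to spectra and $\und{\pi}_0^G$, with the inverse induced by $\gamma^{-1}$. Your justification of the spectrum-level step is more explicit than the paper's one-line assertion. You argue via functoriality of indexed smash products under $G/H\cong G/\gamma H\gamma^{-1}$, or alternatively via the norm--restriction adjunction, but the route is the same.
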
  
\begin{proof}
As before, we choose a transversal $(g_1,\ldots, g_n)$ for $H$ in $G$ and note that
$(g_1\gamma^{-1}, \ldots, g_n\gamma^{-1})$ is a transversal
for $\gamma H\gamma^{-1}$ in $G$: if $G=\bigsqcup_{i=1}^n g_i H$, then                        
\[G=G\cdot \gamma^{-1} =   \bigsqcup_{i=1}^n g_i H  \gamma^{-1} =  \bigsqcup_{i=1}^n (g_i  \gamma^{-1}) (\gamma H  \gamma^{-1} ).\]
So now we map 
\[ N_H^G\und{R}(G/e) = \bigotimes_{g_iH} \und{R}(H/e)\]
to \[ N_{\gamma H\gamma^{-1}}^G( c_{\gamma^{-1}}^*(\und{R}))
(G/e) = \bigotimes_{(g_i  \gamma^{-1}) (\gamma H  \gamma^{-1} )} ( c_{\gamma^{-1}}^*(\und{R}))  (\gamma H  \gamma^{-1} /e)
= \bigotimes_{(g_i  \gamma^{-1}) (\gamma H  \gamma^{-1} )}  \und{R}(H/e)
\]
by sending an element $\bigotimes_{g_iH} r_i \in \bigotimes_{g_iH}
\und{R}(H/e)$ to
\[ \gamma (\bigotimes_{g_iH} r_i) = \bigotimes_{(g_i  \gamma^{-1}) (\gamma H  \gamma^{-1} )}
  r_i. \] 
As above, to show that this action is $G$-equivariant with respect to the tensor
induction action from \eqref{eqn:HHR}, we need to show that the following
diagram commutes for all $g, \gamma \in G$:
\[ \xymatrix{
\bigotimes_{g_iH} \und{R}(H/e) \ar[rr]^{\gamma\quad \quad\quad} \ar[d]_g & &
\bigotimes_{(g_i  \gamma^{-1}) (\gamma H  \gamma^{-1} )} ( c_{\gamma^{-1}}^*(\und{R}))  (\gamma H  \gamma^{-1} /e) \ar[d]^g\\
\bigotimes_{g_iH} \und{R}(H/e) \ar[rr]^{\gamma\quad \quad\quad} &&
 \bigotimes_{(g_i  \gamma^{-1}) (\gamma H  \gamma^{-1} )} ( c_{\gamma^{-1}}^*(\und{R}))  (\gamma H  \gamma^{-1} /e). }\]
By definition
\begin{equation}\label{first}
 \gamma \Big(g\Big(\bigotimes_{g_iH} r_i\Big)\Big) =
  \gamma\Big(\bigotimes_{g_{j_i}H} (g_{j_i}^{-1}gg_i)r_i\Big) = 
\bigotimes_{(g_{j_i}  \gamma^{-1}) (\gamma H  \gamma^{-1} )}  (g_{j_i}^{-1}gg_i)r_i 
\end{equation}
where  $j_i$ is the unique index such that $gg_iH = g_{j_i}H$. On the
other hand
\begin{equation}\label{second}
 g\Big(\gamma\Big(\bigotimes_{g_iH} r_i\Big)\Big) =
  g\Big( \bigotimes_{(g_i  \gamma^{-1}) (\gamma H  \gamma^{-1} )}
  r_i \Big) =
  \bigotimes_{(g_{k_i} \gamma^{-1}) (\gamma H  \gamma^{-1} )}
((g_{k_i}\gamma^{-1})^{-1}gg_i\gamma^{-1})r_i, 
\end{equation}
where $k_i$ is the unique index in the transversal
$(g_1\gamma^{-1}, \ldots, g_n\gamma^{-1})$ for $\gamma H  \gamma^{-1} $ such that 
\[  gg_i H \gamma^{-1} = gg_i\gamma^{-1}(\gamma H  \gamma^{-1} ) =
g_{k_i}\gamma^{-1}(\gamma H  \gamma^{-1} )= g_{k_i} H \gamma^{-1}
,\] 
so again $j_i = k_i$.  Note that 
in Equation \eqref{first}, we regard $r_i$ as an element in $\und{R}(H/e)$ and act on it by $g_{j_i}^{-1}gg_i$ whereas in Equation \eqref{second}, we regard $r_i$ as an element in
$c_{\gamma^{-1}}^*(\und{R}) (\gamma H  \gamma^{-1}/e) $ and act on it by 
$\gamma g_{k_i}^{-1}gg_i\gamma^{-1}= \gamma g_{j_i}^{-1}gg_i\gamma^{-1}$. This
agrees with acting on $r_i$ as an element in $\und{R}(H/e)$ via $c_{\gamma^{-1}}(\gamma g_{j_i}^{-1}gg_i\gamma^{-1}) = g_{j_i}^{-1}gg_i$.

On the spectrum level this corresponds to  a $G$-equivariant 
map  $N_H^GH\und{R}\to N_{\gamma H\gamma^{-1}}^GH( c_{\gamma^{-1}}^*(\und{R}))$ and this in turn induces the desired map
on $\und{\pi}_0$ of these as asserted in the statement of the lemma.

The map is multiplicative in $\gamma$, so its inverse is the map 
 $N_{\gamma H\gamma^{-1}}^G( c_{\gamma^{-1}}^*(\und{R}))
 \to  N_H^G\und{R} $
 induced by $\gamma^{-1}$.
\end{proof}

\bigskip 
If we start with a $\varphi(H)$-Tambara functor $\und{R}$ as in Lemma
\ref{lem:varphir}, then we will need morphisms of $G$-Tambara functors
$N_e^Gi_e^{\varphi(H)}\und{R} \ra
N_{\varphi(H)}^G\und{R}$ and $N_e^Gi_e^{\varphi(H)}\und{R} \ra
N_H^G(\varphi^*\und{R})$. The map $N_e^Gi_e^{\varphi(H)}\und{R} \ra
N_{\varphi(H)}^G\und{R}$ is the composite  
\begin{equation} \label{eq:counitgh'}
 \xymatrix@1{ N_e^Gi_e^{\varphi(H)}\und{R} \ar[r]^(0.4)\xi &
   N_{\varphi(H)}^GN_e^{\varphi(H)} i_e^{\varphi(H)}\und{R} \ar[rr]^(0.6){N_{\varphi(H)}^G(\varepsilon^f)}
   & &  N_{\varphi(H)}^G\und{R}}, \end{equation}
whereas the map
$N_e^Gi_e^{\varphi(H)}(\und{R}) \ra N_H^G(\varphi^*\und{R})$ 
is the composite
\begin{equation}\label{eq:counitgh}
 \xymatrix@1{ N_e^Gi_e^{\varphi(H)}\und{R} \ar[r]^(0.45)\xi &
   N_{H}^GN_e^{H} i_e^{\varphi(H)} \und{R}
   \ar[rr]^(0.55){N_{H}^G(\varepsilon^f_\varphi)} 
   & &  N_{H}^G\varphi^*\und{R}}.  
\end{equation}
Here $\varepsilon^f_\varphi\colon N_e^Hi_e^{\varphi(H)}(\und{R}) \ra
\varphi^*\und{R}$ is determined by its adjoint
$i_e^{\varphi(H)}(\und{R}) \ra i_e^H\varphi^*\und{R}$
\[ \mathrm{ad}(\varepsilon^f_\varphi) \colon i_e^{\varphi(H)}\und{R} =
  \und{R}(\varphi(H)/e) \ra i_e^H\varphi^*\und{R} =
  (\varphi^*\und{R})(H/e) = \und{R}(\varphi(H)/e), \]
the identity map on $\und{R}(\varphi(H)/e)$. 


\section{Loday constructions for Tambara functors with respect to  isotropy subgroups} \label{sec:isotropy}
Our goal is to reduce the requirements for a Tambara functor, so that
the equivariant Loday construction for a finite $G$-simplicial set is
still defined.

\subsection{Trivial isotropy} 
We start with an extreme case:
\begin{prop} \label{prop:free}
Assume that $X$ is a finite simplicial
free $G$-set, \ie, that every $X_n$ is of the form $\bigsqcup_{i=1}^{m_n} G/e$ for
some $1 \leq m_n < \infty$. Then $\cL_X^{G;e}(R)$ can be defined for every
commutative $G$-ring $R$ and the $G$-action on $(N_e^GR)(G/e)$ can be chosen to be
the diagonal or the flip-action.  
\end{prop}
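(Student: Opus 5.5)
The plan is to construct the simplicial $G$-Tambara functor levelwise and then check that the simplicial structure maps are well defined. In each simplicial degree we choose a decomposition $X_n = \bigsqcup_{i=1}^{m_n} Gx_i$ into free orbits, each identified with $G/e$ via $gx_i \mapsto g$, and set
\[ \cL_X^{G;e}(R)_n := \bigbigbox_{i=1}^{m_n} N_e^G R, \]
where $R$ is regarded as an $e$-Tambara functor, i.e. a commutative ring. The $G$-action on $R$ is only needed to define the maps induced by orbit maps. At the free level this is $\bigotimes_{X_n} R$ with $G$ permuting the tensor factors. With the flip action no $G$-action on $R$ enters the description of the norm itself.

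Next we make the construction natural in $G$-maps of free finite $G$-sets. A $G$-map $f \colon X_n \to X_k$ is determined on each orbit by $x_i \mapsto \gamma_i x_{j}$ for some $\gamma_i \in G$. It therefore decomposes into two kinds of pieces. The first kind is a self-map of a single orbit $G/e \to G/e$, $g \mapsto g\gamma_i$, which is right multiplication by an element of $W_G(e) = G$. The second kind is a fold map $G/e \sqcup G/e \to G/e$. For a fold map we use the multiplication $N_e^GR \Box N_e^GR \to N_e^GR$, which is the codiagonal in $G$-Tambara functors. For a Weyl element we use the action of Lemma \ref{lem:Weyl} with $H=e$, where $i_e^e$ is trivial, so this is just a permutation of tensor factors. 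That lemma already shows this action is $G$-equivariant and natural. Equivalently, both kinds of map come from the coproduct-preserving functor $X \mapsto X \otimes R$ on free $G$-sets, since free $G$-sets over $G/e$ form the category of free $G$-sets, and the Tambara functor $N_e^GR$ corepresents $e$-Tambara maps out of $R$.

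The main obstacle is independence of choices and functoriality: different choices of orbit representatives $x_i$ must give coherently isomorphic objects, and composites of face and degeneracy maps must be respected. For the flip action this is immediate from the free-level formula, because everything is a reindexing of tensor factors by elements of $X_n$. So $\cL_X^{G;e}(R)_n(G/e) = \bigotimes_{x \in X_n} R$, functorial in arbitrary maps of sets with multiplication along fibres. Since $N_e^GR$ is determined by its free level via $\und{\pi}_0$ of $\bigwedge_{G} HR$, using \cite[Theorem 5.2]{ullman} as in Lemma \ref{lem:Weyl}, functoriality at the free level lifts to maps of $G$-Tambara functors.

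For the diagonal action we transport the structure along the isomorphism $\Psi$ of Proposition \ref{prop:psi}. $\Psi$ is natural in $R$ and compatible with multiplication and units. Conjugating each structure map by $\Psi$ therefore yields an isomorphic simplicial $G$-Tambara functor, so both choices give well-defined and isomorphic Loday constructions. The commutative $G$-ring hypothesis is exactly what the diagonal action and $\Psi$, $t_g \mapsto g t_g$, require.
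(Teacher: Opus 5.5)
Your treatment of the flip action is fine and is essentially the paper's argument. Unit insertions, fold maps, permutations of summands and $G$-self-maps of $G/e$ act by units, multiplication, permutation of $\Box$-factors, and the permutation action of $W_G(e)=G$ from Lemma \ref{lem:Weyl}.

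The gap is in the diagonal case. The isomorphism $\Psi$ of Proposition \ref{prop:psi} intertwines the \emph{internal} $G$-actions on the value at $G/e$: it compares two Tambara structures on the same ring $R^{\otimes G}$ that have different internal actions. It is not an automorphism of the Tambara functor $N_e^GR$; for $G=C_2$, the map $\Psi(a\otimes b)=a\otimes\tau b$ does not commute with the swap. Nor does it carry the permutation action induced by self-maps of $G/e$ to the diagonal one, and that action is what enters the face maps.

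Conjugating the flip construction by $\Psi$ therefore gives only a relabelled copy of the flip construction, not the Loday construction built with the diagonal action. Your concluding claim that the two choices are isomorphic is false. The paper says so right after the proposition, in Example \ref{ex:twistedcyclicnerve}: the diagonal choice gives the twisted cyclic nerve, and the flip choice gives $\mathrm{sd}_n$ of the untwisted cyclic bar construction.

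Concretely, take $G=C_2$, $R$ with a nontrivial involution $\tau$, and $S^1_{rot}$ with $n=2$. Then $\pi_0$ at level $C_2/e$ is $R\otimes R$ modulo the relations $c\otimes e\sim e\otimes c$ for the flip, resp.\ $c\otimes e\sim \tau e\otimes \tau c$ for the diagonal action. Both quotients are isomorphic to $R$, via $a\otimes b\mapsto ab$, resp.\ $a\otimes b\mapsto a\,\tau(b)$. However, the swap induces the identity in the first case and $\tau$ in the second. So no isomorphism of simplicial $C_2$-Tambara functors can exist between the two constructions.

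What you need instead is the direct argument the paper gives. The diagonal action permutes the tensor factors as in Lemma \ref{lem:Weyl} and applies $\gamma$ coordinatewise. This is itself an action of $G$ on $N_e^GR$ by morphisms of $G$-Tambara functors:
\begin{itemize}
\item it commutes with the tensor-induction action;
\item it is multiplicative and unital, because $G$ acts on $R$ by ring automorphisms;
\item it composes correctly, so that $[\gamma][\delta]=[\gamma\delta]$.
\end{itemize}
Hence exactly the same functoriality argument as in the flip case defines $\cL_X^{G;e}(R)$ with the diagonal action. No comparison between the two choices is needed, and in general none exists.
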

\begin{proof}
As $X$ is free with $X_n$ as above, then we can set 
\[ \cL_X^{G;e}(R)_n := \bigbox_{i=1}^{m_n} N_{e}^G R.\]
Here, $N_e^GR$ is a $G$-Tambara functor and in this case we have the
choice whether we just use the flip-action of $G$ on $N_e^GR$ or the
diagonal $G$-action on 
$N_e^GR$ where $G$ permutes the box-product factors in the norm and acts
coordinatewise on the tensor factors of $R$.

For every $f \colon [m] \ra [n] \in \Delta$ the only possible induced
maps $X(f) \colon X_n \ra X_m$ are built out of inclusions
$\varnothing \ra G/e$, fold maps, permutations of summands and
equivariant self-maps of $G/e$.

Inclusions $\varnothing \hookrightarrow G/e$ insert the unit $\und{A}
\ra N_e^GR$ of the $G$-Tambara functor $N_e^GR$, fold maps induce
multiplication on $N_e^GR$, permutations give rise to permutations of
box-product factors and equivariant self-maps of $G/e$ induce a
$G$-action on $N_e^GR$. No matter whether we choose the diagonal or the flip action on $N_e^GR$, all the remaining structure is compatible with either choice. 
Thus we only need the commutative $G$-ring $R$ in order to define the
equivariant Loday construction $\cL_X^{G;e}(R)$. 
\end{proof}

\begin{rem}
Note that  we allow the trivial $G$-action on
$R$, so any commutative ring will do.  

In the above result we assign $N_e^GR$ to a free orbit $G/e$. In the ordinary
equivariant Loday construction we would denote this by $G/e \otimes \und{T} = N_e^Gi_e^G\und{T}$ 
if $\und{T}$ is a $G$-Tambara functor. But here, the input is an $e$-Tambara
functor (which is nothing but a commutative ring) or a commutative ring with
$G$-action. In this setting a notation like
  \[ G\otimes_e R := N_e^GR\]
  is more appropriate in order to make clear what the structure of the
  input is. 
\end{rem}

In the free case we are free to make either choice, the flip or the
diagonal action. The resulting Loday constructions, however, might differ:

\begin{ex} \label{ex:twistedcyclicnerve}
Consider the cyclic group of order $n$, $C_n = \langle \gamma \mid \gamma^n=e\rangle$, and a simplicial model
of $S^1$ with the rotation action, $S^1_{rot}$:
\[ \xymatrix@C=0.7cm@R=0.5cm{
 &  &\ldots \phantom{bla} & \\
& && \gamma^2 \ar@/_1ex/[ul]\\
\, \, \vdots \ar@/_1ex/[dr]& && \gamma \ar@/_1ex/[u]\\
&\gamma^{n-1} \ar@/_1ex/[r] & 1 \ar@/_1ex/[ur]&
  }\]

The corresponding simplicial set $S^1_{rot}$ is free in every degree
and we identified its structure in \cite[\S 7.1]{lrz-gloday}. Let $\und{R}$
be a $C_n$-Tambara functor. If we
take the diagonal action on $N_e^{C_n}i_e^{C_n}\und{R}$, then
$\cL^{C_n}_{S^1_{rot}}(\und{R})$ is isomorphic to the twisted cyclic nerve
of $N_e^{C_n}i_e^{C_n}(\und{R})$ in the sense of \cite{bghl}.

On the other hand, just 
acting by the flip action gives $\cL^{C_n,f}_{S^1_{rot}}(\und{R})(C_n/e) \cong
\mathrm{sd}_n\cL_{S^1} (\und{R}(C_n/e))$, where the latter is the $n$-fold subdivision of the cyclic bar construction of the commutative ring $\und{R}(C_n/e)$.
In particular, in this simplicial object the last face map is \emph{not} twisted and its homotopy groups are the ordinary Hochschild homology groups of
$\und{R}(C_n/e)$. We discussed this case in \cite[Theorem 7.4]{lrz-gloday}. 

Due to Proposition \ref{prop:free} we can define $\cL^{C_n;e}_{S^1_{rot}}(R)$
for every $C_n$-commutative ring $R$. If the $C_n$-action on $R$ is non-trivial, then the Loday construction with the diagonal action on $N_e^{C_n}(R)$ differs
from the one with the flip-action. 

\end{ex}

The above example is a special case of a Cayley graph.  Let $G$ be a
finite and finitely presented group of the form $G = \langle s_1, 
\ldots, s_n \mid r_1, \ldots, r_m\rangle$ with $s_i \neq e$ for all
$i$ and by the usual abuse of notation we denote by $\cC_G$ the 
Cayley graph of $G$ with respect to this presentation. Here, we take
the full Cayley graph of $G$ in the sense that we do not simplify
loops of length two corresponding to involutions to an unoriented
edge, but we keep the loop. With this convention we can consider a
simplicial model of the Cayley graph and by another abuse of notation we
denote this finite simplicial $G$-set also by $\cC_G$. Then the action
of $G$ on $\cC_G$ is free: The vertices of the Cayley graph correspond
to elements $g$ in the group $G$ and the group acts from the left on
the vertices. Thus $(\cC_G)_0 = G/e$. An edge
  $\xymatrix@1{g \ar[r]^{\cdot s_j} & gs_j}$ in the Cayley graph
  corresponds to a $1$-simplex and the corresponding orbit is free as
  well. All higher dimensional simplices in $\cC_G$ are degenerate. 

An immediate consequence of Proposition \ref{prop:free} is the
following result.
\begin{cor}
  Let $\cC_G$ be the simplicial model of a Cayley graph of a finite
  group with a chosen finite presentation as above. Then the $G$-Loday
  construction $\cL_{\cC_G}^{G;e}(R)$ can be defined for all commutative
  $G$-rings $R$. 
\end{cor}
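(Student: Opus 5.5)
The plan is to reduce the corollary directly to Proposition \ref{prop:free}, so the only real work is to check that the simplicial model $\cC_G$ is a finite simplicial free $G$-set in the sense required there. That is, every $X_n = (\cC_G)_n$ should be a finite disjoint union of copies of $G/e$.

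First I treat the nondegenerate simplices. In degree $0$ the vertices are the elements of $G$ with left multiplication, so $(\cC_G)_0 = G/e$. In degree $1$ the nondegenerate simplices are the edges $g \to gs_j$. Since the action is by left multiplication, $\gamma$ sends this edge to $\gamma g \to \gamma g s_j$. An edge is determined by its source $g$ and its label $j$, so $G$ acts freely and transitively on the edges with a fixed label $j$. The nondegenerate $1$-simplices therefore form $\bigsqcup_{j=1}^n G/e$. Keeping loops of length two for involutions (rather than collapsing them to unoriented edges) matters here: it guarantees that no edge is reversed by a group element, which would otherwise create isotropy $C_2$.

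Next I handle the degenerate simplices. Since $\cC_G$ is $1$-dimensional, every $n$-simplex for $n \geq 2$ is an iterated degeneracy of a vertex or an edge. The degeneracy operators are $G$-equivariant and injective, so the isotropy group of a degenerate simplex equals the isotropy group of the nondegenerate simplex it comes from, namely $e$. It follows that each $(\cC_G)_n$ is a finite disjoint union of free orbits. Its number of orbits is $m_n = \#\{\text{degeneracies of vertices}\} + n\cdot\#\{\text{degeneracies of edges}\}$, which is finite because $G$ and the presentation are finite.

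Given this, Proposition \ref{prop:free} applies verbatim and defines $\cL_{\cC_G}^{G;e}(R)$ for every commutative $G$-ring $R$, with either the flip or the diagonal action. The only point needing care is the freeness in degree $1$, and in particular the role of the involution-loop convention just noted. Everything else is bookkeeping.
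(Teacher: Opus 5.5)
Your proposal is correct and is essentially the paper's argument: the paper observes that $G$ acts freely on $\cC_G$ (the vertices form $G/e$, each edge orbit is free, and all higher simplices are degenerate) and then applies Proposition~\ref{prop:free}, and your remark that injective equivariant degeneracies preserve isotropy makes explicit a step the paper leaves implicit. The only blemish is your orbit count, which should say there are $1 + k\cdot n$ free orbits in simplicial degree $k$ when there are $n$ generators; as written you use $n$ both for the degree and for the number of generators.
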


\subsection{One non-trivial isotropy group (and its conjugates)}\label{sec:oneisotropy}
We now consider the case where a finite simplicial $G$-set $X$ has
non-trivial isotropy. 

If $e \neq H < G$ is an isotropy subgroup  of a
point $x\in X$, then $gHg^{-1}$ is the isotropy group of $g\cdot x$. 
So if we have an orbit $G\cdot y$ isomorphic to $G/gHg^{-1}$,
we could pick a different representative and consider it the orbit of $x=g^{-1}\cdot y$
and think of that orbit as isomorphic to $G/H$.  
This is not surprising since $G/H\cong G/gHg^{-1} $ as $G$-sets.  So on the
one hand, once we get $H$ as an isotropy group, we get all of its conjugates
as isotropy groups as well, but  on the other hand, dealing with the orbit
$G/H$ covers all of these isotropy cases.

Moreover, if we have a map between two $G$-sets $G/H\to G/gHg^{-1} $
and we identify $G/gHg^{-1} \cong G/H$, we can just think of our map as a $G$-map $G/H\to G/H$; these are given by
the action of the Weyl group.  

\begin{thm} \label{thm:conjugateisotropy}
Let $H\neq e$ be a subgroup of $G$, and assume that a finite
simplicial $G$-set $X$ has only   $e$, $H$ and its conjugates $
gHg^{-1}$, $g\in G$ as
isotropy subgroups of its elements.  Then by the above comments, every $X_n$ can be written as
\[ X_n = \bigsqcup_{i \in E_e} G/e \sqcup \bigsqcup_{x \in E_H} G/H
\]
for some finite sets $E_e$ and $E_H$. 
Let $\und{R}$ be an $H$-Tambara functor.  Then
\[ \cL_X^{G;H}(\und{R})_n := \Big(\bigbox_{i \in E_e}
  N_e^Gi_e^H\und{R}\Big) \Box  \Big(\bigbox_{x \in E_H}
  N_H^G\und{R}\Big) \]
gives rise to a well-defined simplicial $G$-Tambara functor 
$\cL_X^{G;H}(\und{R})$ if one uses the flip action on the norm-restriction terms 
$N_e^{H}i_e^{H}\und{R}$.  
\end{thm}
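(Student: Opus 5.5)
The plan is to mimic the construction of the ordinary equivariant Loday construction in \cite{lrz-gloday}. A $G$-Tambara functor $X_n \otimes \und{T}$ is defined functorially in the finite $G$-set $X_n$, but here we restrict to the full subcategory of finite $G$-sets whose isotropy lies in $\{e\} \cup \{gHg^{-1}\}$. It suffices to produce a functor from this category, with disjoint unions mapping to box products, into $G$-Tambara functors. Every simplicial structure map $X(f)\colon X_n \ra X_m$ factors as a disjoint union of maps out of orbits, each landing in one orbit. Since the box product is the coproduct of $G$-Tambara functors, it suffices to define the functor on the generating maps between orbits $G/e$ and $G/H$ (up to the identifications $G/gHg^{-1}\cong G/H$) and check compatibility with composition.

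The generating $G$-maps are of four kinds.
\begin{enumerate}
\item Maps $G/e \ra G/e$ are right multiplication by elements of $G$. We send them to the $G$-action on $N_e^Gi_e^H\und{R}$ permuting tensor factors at $G/e$. This is the flip action of $G$; via \eqref{eq:xi} it is the tensor-induction action on $N_H^GN_e^Hi_e^H\und{R}$ with the flip action on the inner term.
\item Maps $G/H \ra G/H$ are given by the Weyl group. We send them to the $W_G(H)$-action of Lemma \ref{lem:Weyl}.
\item Maps $G/H \ra G/gHg^{-1}$ are handled by the isomorphism of Lemma \ref{lemma:switchingtoconjugates} together with $c_{\gamma^{-1}}^*$ from Lemma \ref{lem:varphir}.
\item The projection $G/e \ra G/H$, $g \mapsto g\gamma H$, is sent to the composite \eqref{eq:counitgh'} with $\varphi=\id$, namely $N_H^G(\varepsilon^f)\circ\xi$, precomposed with the action of $\gamma$.
\end{enumerate}
Fold maps go to multiplications, and inclusions of $\varnothing$ go to units, exactly as in Proposition \ref{prop:free}. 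There are no maps $G/H \ra G/e$, so nothing else needs to be defined.

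The next step is functoriality, which reduces to a few identities. First, the Weyl action on $N_H^G\und{R}$ must be compatible with $N_H^G(\varepsilon^f)\circ\xi$ and the $G$-action on $N_e^Gi_e^H\und{R}$. Concretely, for $\gamma \in N_G(H)$, right multiplication by $\gamma^{-1}$ on $G/e$ followed by projection must match projection followed by $[\gamma]$. I would check this at the free level $G/e$, using Ullman's theorem \cite{ullman} as in the proof of Lemma \ref{lem:Weyl} to reduce to this level via the norm–restriction adjunction. There the map sends $\bigotimes_{g\in G} t_g$ to $\bigotimes_{g_iH}\prod_{h\in H} h\,t_{g_ih}$ by Proposition \ref{prop:counit}. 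Right multiplication by $\gamma^{-1}$ reindexes $g_ih \mapsto g_ih\gamma^{-1} = g_i\gamma^{-1}(\gamma h\gamma^{-1})$, and the product over $H$ is invariant under this conjugation reparametrisation. This is where the flip action matters: for the diagonal action the $h$-twists would not match, as the $\Sigma_3$ example illustrates. Second, the two composites $G/e\ra G/e\ra G/H$ must agree, which is the identity $\varepsilon^f(h\cdot x)=\varepsilon^f(x)$ for the inner $H$-factor. Third, the three kinds of isomorphisms in items (2) and (3) must compose correctly; this follows from the multiplicativity in $\gamma$ noted in Lemma \ref{lemma:switchingtoconjugates} and from the contravariance remark after Lemma \ref{lem:varphir}.

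The main obstacle I expect is well-definedness independent of choices. A given $G$-map of orbits can be written with different $\gamma$, and the transversals are chosen arbitrarily. I would fix one transversal for each orbit type, show that the explicit transversal-change isomorphisms commute with all the maps above, and conclude that composition of $G$-maps is sent to composition of Tambara maps. Together with the box-product behaviour of disjoint unions, this yields a functor on the relevant orbit category, and hence the simplicial identities for $\cL_X^{G;H}(\und{R})$.
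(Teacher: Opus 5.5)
\textbf{There is a genuine gap.} Your plan follows the paper's architecture. You split every structure map into units, folds, orbit self-maps and the standard projection $\pi\colon G/e\ra G/H$, send $\pi$ to $N_H^G(\varepsilon^f)\circ\xi$, and check relations. The paper checks only fold/projection naturality, composites of degeneracies, unit/projection (via initiality of $\und{A}$) and unit/multiplication. It reduces an arbitrary map $G/e\ra G/H$ to $\pi$ preceded by a self-map and takes the self-map actions from Lemma \ref{lem:Weyl}, without checking that they are compatible with $\pi$. You correctly identify that compatibility as the crux, but both identities you offer for it are false.

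\emph{The reparametrisation step.} For $\gamma\in N_G(H)$, the relabelling $g_ih\gamma^{-1}=g_i\gamma^{-1}(\gamma h\gamma^{-1})$ puts $t_{g_ih}$ at position $\gamma h\gamma^{-1}$ of the block $g_i\gamma^{-1}H$. The flip counit then produces $\prod_{h\in H}(\gamma h\gamma^{-1})\cdot t_{g_ih}$ in that block, while $[\gamma]\circ N_H^G(\varepsilon^f)\circ\xi$ produces $\prod_{h\in H}h\cdot t_{g_ih}$. Reindexing changes which group element acts on which factor, so the product is not invariant. In general the two sides differ whenever $\gamma$ does not centralise $H$. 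For example, take $G=\Sigma_3$, $H=A_3=\langle c\rangle$, $\gamma=(1,2)$, $t_c=x$ and $t_g=1$ otherwise: you get $c^{-1}x$ versus $cx$. Citing Lemma \ref{lem:Weyl} does not rescue this. With the transversal $(g_i\gamma^{-1})$ the coefficient in \eqref{eqn:HHR} is $\gamma g_{k_i}^{-1}gg_i\gamma^{-1}$, not $g_{k_i}^{-1}gg_i$ as written in that proof. As Lemma \ref{lemma:switchingtoconjugates} shows, the formula is a $G$-isomorphism $N_H^G\und{R}\ra N_H^G(c_{\gamma^{-1}}^*\und{R})$, not a self-map of $N_H^G\und{R}$.

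\emph{The identity $\varepsilon^f(h\cdot x)=\varepsilon^f(x)$.} This is false. $\varepsilon^f$ is $H$-equivariant, so $\varepsilon^f(h\cdot x)=h\cdot\varepsilon^f(x)$ for the flip action. Concretely, for $H=\langle\sigma\rangle\cong C_2$ it sends $t_1\otimes t_\sigma$ to $t_1\,\sigma(t_\sigma)$ but $t_\sigma\otimes t_1$ to $t_\sigma\,\sigma(t_1)$. So if right multiplication $\rho_h$ on $G/e$, with $h\in H$, is sent to a mere permutation of tensor factors as in your item (1), the relation $\pi\circ\rho_h=\pi$ is violated. Repairing this requires $\rho_h$ to act on the entries as well, e.g.\ $\bigotimes_g t_g\mapsto\bigotimes_{gh}h^{-1}t_g$. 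Extending that, in the same form, to all $\delta\in G$ needs an action of $G$ on $\und{R}(H/e)$ extending the $H$-action, which a general $H$-Tambara functor does not provide. Separately, in item (1) the flip action of $\delta$ is left multiplication, which is not a map of $G$-Tambara functors unless $\delta$ is central. Self-maps of $G/e$ must go to right-multiplication type automorphisms.

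\emph{What this means.} The functoriality step cannot be completed as you sketch it in the stated generality, and the paper's proof does not settle this point either. Your method does work with extra input. One example is a retraction $\theta\colon G\ra H$, as for $\langle(1,2)\rangle\le\Sigma_n$ via the sign. Let $\rho_\delta$ permute factors and apply $\theta(\delta)^{-1}$ to the entries, and correct the Weyl map of $\gamma$ by applying $\theta(\gamma)$. Then both of your compatibilities hold. Without such input you must restrict to $X$ whose structure maps never force these identities.
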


\begin{proof}
  Again, we have to describe the effect of the simplicial structure maps and show that the simplicial identities hold.  

For every $f \colon [m] \ra [n] \in \Delta$ the possible induced
maps $X(f) \colon X_n \ra X_m$ are built out of inclusions
$\varnothing \ra G/e$ and $\varnothing \ra G/H$, fold maps $G/e \sqcup
G/e \ra G/e$ and $G/H \sqcup G/H \ra G/H$, permutations of summands,  
equivariant self-maps of $G/e$, $G/H$ and equivariant
surjections  $G/e \ra 
G/H$. 

As all
equivariant maps $G/e \ra G/H$ are a composite of a 
$G$-self-map on $G/e$ and the standard projection $\pi \colon G/e \ra
G/H$, we only cover the case of the standard projection. Note that degeneracies induce
 injections that are typically non-isomorphic and face maps induce 
 surjections that are typically non-isomorphic, thus those maps are composites
 of fold maps and projections $\pi$ combined with isomorphisms. 

The inclusions $j_e \colon \varnothing \ra G/e$ and $j_{H} \colon
\varnothing \ra G/H$ induce 
the unit maps of the commutative rings $\und{R}(G/e)$ and
$\und{R}(G/H)$ and fold maps induce multiplication maps in these
rings. Equivariant self maps induce the $G$-action on $\und{R}(G/e)$
and the $W_G(H)$-action  on $\und{R}(G/H)$  which was described in Lemma
\ref{lem:Weyl}.

The projection map $\pi
\colon G/e \ra G/H$ induces the map
\[ N_H^G(\varepsilon^f) \circ \xi \colon N_e^Gi_e^H\und{R} \cong
N_H^GN_e^Hi_e^H\und{R} \ra N_H^G\und{R}\]  and this is the crucial point
where we need to use the flip action on norms.

The naturality of the fold map $\nabla$ ensures that the diagram
\[ \xymatrix{
G/e \sqcup G/e \ar[rr]^{\pi \sqcup \pi} \ar[d]_{\nabla_e}&& G/H \sqcup
G/H \ar[d]^{\nabla_H} \\  
G/e \ar[rr]^\pi & & G/H
  }\]
commutes. Therefore, also higher iterations of fold maps and
projections commute such as 
\[ \xymatrix{G/H \sqcup G/e \sqcup G/e  \ar[rr]^{\id \sqcup \pi \sqcup
      \pi} \ar[d]_{\id \sqcup \nabla_e}& & G/H \sqcup G/H \sqcup G/H
    \ar[d]^{\id \sqcup \nabla_H} \\      
G/H \sqcup G/e  \ar[d]_{\id \sqcup \pi} & & G/H \sqcup G/H \ar[d]^{\nabla_H}\\
G/H \sqcup G/H \ar[rr]^{\nabla_H} & & G/H
}\]
and the corresponding diagram with three copies of $G/e$ to start
with. This and the associativity of fold maps ensures that the
composite of two face maps satisfies the simplicial identities. 

Composites of degeneracies correspond to iterated inclusions like
\[ \xymatrix{\varnothing \ar[r]^(0.3){j_k} & G/K = G/K \sqcup \varnothing
    \ar[rr]^{\id \sqcup j_U} & &  G/K \sqcup G/U} \]
with $K,U \in \{e,H\}$, so these satisfy the simplicial
identities.

The commutative diagrams $\xymatrix{\varnothing \ar[r]^{j_e} \ar[rd]_{j_H} & G/e \ar[d]^\pi \\
  & G/H}$ induces the diagram $\xymatrix{\und{A} \ar[rr]^{\eta_{N_e^Gi_e^H\und{R}}} \ar[drr]_{\eta_{N_H^G\und{R}}}& & N_e^Gi_e^H\und{R} \ar[d]^{N_H^G(\varepsilon^f) \circ \xi} \\
 & & N_H^G\und{R}. }$ 
This commutes because the Burnside Tambara functor $\und{A}$ is an initial object in the category of $G$-Tambara functors. 

Last but not least, the diagram  
\[  \xymatrix{G/K \sqcup G/K \ar[drrr]_{\nabla_K} & & \ar[ll]_{j_K \sqcup \id}
    \varnothing \sqcup G/K &  \ar@{=}[l]G/K \ar@{=}[d]\ar@{=}[r] & G/K \sqcup \varnothing  \ar[rr]^{\id \sqcup
    j_K} & & G/K \sqcup G/K  \ar[dlll]^{\nabla_K}\\
 & & & G/K &&&}   \]
for $K \in \{e,H\}$ induces a diagram of $G$-Tambara functors that expresses the
fact that the multiplications in the rings $N_e^Gi_e^H\und{R}$ and $N_H^G\und{R}$ have
a right- and left-sided unit. Hence the mixed simplicial identities are satisfied. 

\end{proof}

\begin{ex}
If a finite simplicial $G$-set $X$ has isotropy subgroups $e$ and $H$ and if $H$
is normal in $G$, then we do not need to worry about picking a good representative for each orbit. We can choose any representative and the only occurring
orbit types are $G/e$ and $G/H$. Examples of such $G$-simplicial sets are Cayley graphs for
$G/H$ viewed as finite simplicial $G$-sets. Hence
\[ \cL_{\cC_{G/H}}^{G;H}(\und{R}) \]
can be defined for all $H$-Tambara functors $\und{R}$ whenever $H$ is a
normal subgroup of $G$. 
\end{ex}

\subsection{Finite $\Sigma_n$-simplicial sets with small isotropy for all $n \geq
  2$} \label{sec:permutohedron} 

The $n$th permutohedron is an $(n-1)$-dimensional polytope defined as the
convex hull of the vectors $\sigma(1,2,\ldots, n), \sigma \in \Sigma_n$. 
The symmetric group $\Sigma_n$ acts on the $n$th permutohedron by permuting
the coordinates. Edges connect those vertices whose vectors differ in two
coordinates whose values differ by $1$. 

For $n=2$ this yields the line segment between the points $(1,2)$ and $(2,1)$ in
$\mathbb{R}^2$, for $n=3$ we obtain a hexagon and for $n=4$ one gets a truncated octahedron with four squares and eight hexagons as faces.

In the following we will consider simplicial models of the
$1$-skeleta of the $n$th permutohedron and call them $P_{\Sigma_n}$. For $n=3$
the $1$-skeleton is
\[ \xymatrix{  & (2,1,3) \ar@{-}[rr]& &(3,1,2) &  \\
    &&&\\
   (1,2,3) \ar@{-}[uur]\ar@{-}[ddr] &&&& (3,2,1) \ar@{-}[uul]\ar@{-}[ddl]
   \\
   &&&\\
    &(1,3,2) \ar@{-}[rr]&& (2,3,1) & 
  } \]

The vertices of the $n$th permutohedron constitute a free orbit
$\Sigma_n/e$ and the stabilizers of the midpoints of the edges are
generated by transpositions, hence Theorem \ref{thm:conjugateisotropy}
applies for all $n$. 
In the example of $n=3$ the barycentric subdivision of the above model
gives rise to  a simplicial model of the $1$-skeleton of the
$3$rd permutohedron, thus the relevant part of $P_{\Sigma_3}$ looks
like this
\[ \xymatrix@C=1cm@R=0.4cm{  & & (2,1,3) \ar[r] \ar[dl]& \vartriangle & (3,1,2) \ar[l] \ar[dr] & &  \\
    & \circ &&&& \circ & \\
   (1,2,3) \ar[ur]\ar[dr] &&&&&& (3,2,1) \ar[ul]\ar[dl] \\
 & \vartriangle &&&& \vartriangle & \\
   &&(1,3,2) \ar[r] \ar[ul]& \circ  & \ar[l]  (2,3,1) \ar[ur] & &
  } \]
where the vertices labelled with $(i,j,k)$ for $i,j,k \in
\{1,2,3\}$ constitute a free orbit $\Sigma_3/e$. If we choose as
representatives for the orbits the points in the segment
\[ (1,2.5,2.5) = \vartriangle \longleftarrow (1,2,3) \longrightarrow \circ = (1.5,1.5,3),\]
then the vertices labelled with $\circ$ give rise to a $\Sigma_3/\langle
(1,2)\rangle$-orbit and the $\vartriangle$-vertices assemble into a
$\Sigma_3/\langle (2,3)\rangle$-orbit.  
The half-edges decompose into two free orbits in simplicial degree $1$,
so in total we have 
\[ \Sigma_3/\langle (2,3)\rangle \sqcup \Sigma_3/e \sqcup
  \Sigma_3/e \sqcup \Sigma_3/e \sqcup \Sigma_3/\langle
  (1,2)\rangle \]
in degree one. 
The above structure up to simplicial degree $1$ propagates to
higher simplicial degrees and the resulting Loday 
construction of a $\Sigma_3$-Tambara functor $\und{T}$ in simplicial degree $k$
is 
\begin{align*}
  \cL_{P_{\Sigma_3}}^{\Sigma_3}(\und{T})_k & = 
  (\Sigma_3/\langle (2,3)\rangle \sqcup \Sigma_3/e^{\sqcup k} \sqcup
  \Sigma_3/e \sqcup \Sigma_3/e^{\sqcup k} \sqcup \Sigma_3/\langle
  (1,2)\rangle) \otimes \und{T} \\
& = N_{\langle (2,3)\rangle}^{\Sigma_3}i_{\langle
  (2,3)\rangle}^{\Sigma_3} \und{T} \Box \Big(\bigbox_{i=1}^k
  N_e^{\Sigma_3}i_e^{\Sigma_3} \und{T} \Big) \Box
  N_e^{\Sigma_3}i_e^{\Sigma_3} \und{T} \Box \Big(\bigbox_{i=1}^k
  N_e^{\Sigma_3}i_e^{\Sigma_3} \und{T}\Big) \Box  N_{\langle (1,2)\rangle}^{\Sigma_3}i_{\langle
  (1,2)\rangle}^{\Sigma_3} \und{T}
\end{align*}
and the simplicial structure is the one of two glued two-sided bar
constructions
\[ B(N_{\langle (2,3)\rangle}^{\Sigma_3}i_{\langle
  (2,3)\rangle}^{\Sigma_3} \und{T}, N_e^{\Sigma_3}i_e^{\Sigma_3}
\und{T}, N_e^{\Sigma_3}i_e^{\Sigma_3} \und{T})
\Box_{N_e^{\Sigma_3}i_e^{\Sigma_3} \und{T}}
B(N_e^{\Sigma_3}i_e^{\Sigma_3} \und{T}, N_e^{\Sigma_3}i_e^{\Sigma_3}
\und{T}, N_{\langle (1,2)\rangle}^{\Sigma_3}i_{\langle
  (1,2)\rangle}^{\Sigma_3} \und{T}).  \]

Theorem  \ref{thm:conjugateisotropy}  ensures that there is a
well-defined $\Sigma_n$-Loday construction
$\cL_{P_{\Sigma_n}}^{\Sigma_n;C_2}(\und{R})$ for every
$C_2 := \langle (1,2)\rangle$-Tambara functor $\und{R}$, so  in particular,
we can define $\cL_{P_{\Sigma_3}}^{\Sigma_3;C_2}(\und{R})$ for such
$\und{R}$. We can start with a $\langle (1,2)\rangle$-Tambara 
functor $\und{R}$ and use the inner automorphism of $\Sigma_3$ that is
conjugation by $(1,3)$, $c_{(1,3)}$, and get that
$(c_{(1,3)})^*\und{R}$ is a $\langle (2,3)\rangle$-Tambara functor with
\[ (c_{(1,3)})^*\und{R}(\langle (2,3)\rangle/K) = \und{R}(\langle
  (1,3)(2,3)(1,3)\rangle/(1,3)K(1,3)) = \und{R}(\langle
  (1,2)\rangle/(1,3)K(1,3)). \] 

The $\Sigma_3$-Loday construction of $\und{R}$ and $P_{\Sigma_3}$ is
then 
\[ \cL_{P_{\Sigma_3}}^{\Sigma_3;C_2}(\und{R}) =
  B(N_{\langle (2,3)\rangle}^{\Sigma_3}(c_{(1,3)})^*\und{R}, N_e^{\Sigma_3}i_e^{C_2}
\und{R}, N_e^{\Sigma_3}i_e^{C_2} \und{R})
\Box_{N_e^{\Sigma_3}i_e^{C_2} \und{R}}
B(N_e^{\Sigma_3}i_e^{C_2} \und{R}, N_e^{\Sigma_3}i_e^{C_2}
\und{R}, N_{C_2}^{\Sigma_3}\und{R}).  \]

\subsection{A normal isotropy subgroup and its subgroups}
Assume now that the isotropy subgroups of a finite $G$-simplicial set $X$ are
of the form $H \lhd G$ where $H$ is a normal subgroup of $G$ and some subgroups
$K_i < H$ of which at least one $K_i \neq e$. In this situation all conjugates of the $K_i$s are also contained in
$H$. If we consider an orbit $G/K_i$ and want to merge this with an $H$-Tambara
functor, then defining this as $N_{K_i}^Gi_{K_i}^H\und{R}$ would be problematic
because we cannot control the combination of the isomorphism $\xi$ with the
augmentation maps $\varepsilon \colon N_{K_i}^Hi_{K_i}^H\und{R} \ra \und{R}$. We
opt for a different definition:

\begin{defn} \label{defn:normalandsubgroups}
  If $\und{R}$ is an $H$-Tambara functor and if $X$ is a finite $G$-set with isotropy as above and $X_n = \bigsqcup_{E_H} G/H \sqcup \bigsqcup_i \bigsqcup_{E_{K_i}}G/K_i$ for some finite indexing sets $E_H$ and $E_{K_i}$, then we define
  \[ \cL_X^{G;H}(\und{R})_n := \Big(\bigbox_{E_H} N_H^G\und{R}\Big) \Box \Big(\bigbox_i\bigbox_{E_{K_i}} N_H^GN_{K_i}^Hi_{K_i}^H\und{R}\Big). \]
\end{defn}

\begin{rem}
  In particular, we assign to an orbit $G/K$ with $K < H$ and any $H$-Tambara functor $\und{R}$ the value $N_H^GN_K^Hi_K^H\und{R}$. We interpret this as
  \[ N_H^GN_K^Hi_K^H\und{R} = G \otimes_H (H/K \otimes \und{R}).\]
  Here, $H/K \otimes \und{R}$ is the tensor product of the finite $H$-set $H/K$
  with the $H$-Tambara functor $\und{R}$ using the $H$-commutative monoid structure of
  $H$-Tambara functors and then the outer $G \otimes_H (-)$ takes the resulting
  $H$-Tambara functor and tensors it up to $G$. 

With Definition \ref{defn:normalandsubgroups} we would have produced a slight ambiguity in the case where $H$ is normal in $G$ and the only other isotropy subgroup is trivial. This is why we excluded this case explicitly.  But even in that case the definition would only differ by the application of the isomorphism $\xi \colon N_e^Gi_e^H\und{R} \cong N_H^GN_e^Hi_e^H\und{R}$. 

We restricted to normal subgroups $H \lhd G$ because otherwise it could happen that $K < H$ but $gKg^{-1} \nless H$ for some $g \in G$. Of course $gK_ig^{-1} < gHg^{-1}$ but a fixed given $K$ could then be viewed as a subgroup of different conjugates of $H$ and \emph{a priori} there is no canonical choice.  
\end{rem}

  \begin{prop}
The above definition yields a well-defined $G$-Loday construction.
\end{prop}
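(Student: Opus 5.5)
The key observation is that Definition \ref{defn:normalandsubgroups} factors through a two-stage construction. Since $H \lhd G$ and all isotropy groups lie in $H$, restricting $X$ to $H$ gives a finite simplicial $H$-set. I would therefore view $\cL_X^{G;H}(\und{R})_n$ as $N_H^G$ applied degreewise to an $H$-equivariant Loday construction of $\und{R}$, and reduce well-definedness to functoriality statements already available.

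Concretely, I would first decompose each $G$-orbit as an $H$-set. For $K<H$, normality gives $G/K \cong \bigsqcup_{G/H} H/(gKg^{-1})$ as $H$-sets after choosing a transversal. On the Tambara side this matches the identification
\[ N_H^G N_K^H i_K^H\und{R} \cong N_H^G\big(H/K\otimes \und{R}\big), \]
which uses the ordinary $H$-Loday tensor $H/K \otimes \und{R} = N_K^Hi_K^H\und{R}$ of \cite{lrz-gloday}, natural in $H$-maps by Mazur and Hoyer. Similarly $G/H$ corresponds to $N_H^G(H/H\otimes\und{R}) = N_H^G\und{R}$. Then I would check how $G$-maps between orbits act:
\begin{itemize}
\item Any $G$-map $G/K \to G/L$ with $K,L \le H$ is, up to a $G$-automorphism (right multiplication by some $\gamma\in G$), the map induced by an $H$-map $H/K\to H/(\gamma L\gamma^{-1})$.
\item Inclusions of the empty set and fold maps correspond to units and multiplications, exactly as in Theorem~\ref{thm:conjugateisotropy}.
\item For the projection $G/K\to G/H$ I would assign $N_H^G$ of the counit $N_K^Hi_K^H\und{R}\to\und{R}$ from the $H$-Tambara structure. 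This avoids any use of $\xi$ and sidesteps the diagonal-versus-flip issue that Theorem~\ref{thm:conjugateisotropy} had to confront.
\end{itemize}

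The main obstacle is the $G$-self-maps $[\gamma]$ of $G/K$ with $\gamma\in N_G(K)\setminus H$, and more generally maps $G/K\to G/L$ that are not induced from $H$. Such a map sends the summand $H/K$ to $H/(\gamma K\gamma^{-1})$, so it twists $\und{R}$ by conjugation with $\gamma$, which is not available inside $H$. I would handle this with Lemma~\ref{lemma:switchingtoconjugates} and Lemma~\ref{lem:Weyl} applied to $N_H^G$. Since $\gamma$ normalizes $H$, it gives an isomorphism $N_H^G\und{S}\cong N_H^G c_{\gamma^{-1}}^*\und{S}$, and I would compose it with the isomorphism $c_{\gamma^{-1}}^*(H/K\otimes\und{R})\cong H/(\gamma K \gamma^{-1})\otimes \und{R}$ coming from Lemma~\ref{lem:varphir}. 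I expect this combined map to be the required action, and I would verify it on the free level $G/e$ using the explicit tensor-induction formula \eqref{eqn:HHR}. The point to check is that it is multiplicative in $\gamma$ and agrees with the $H$-Loday functoriality when $\gamma\in H$; this is where the transversal bookkeeping happens. Whether the conjugation twist on $\und{R}$ genuinely cancels for $\gamma \in G \setminus H$ is the step I am least sure of and would check first.

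Once every orbit map has an assigned Tambara map, functoriality (hence the simplicial identities) follows. Each composite decomposes into $H$-level maps, where functoriality holds by Mazur and Hoyer and is preserved by the functor $N_H^G$, together with conjugation isomorphisms, which are coherent by the multiplicativity just checked. Unit compatibility follows from initiality of $\und{A}$, as in the proof of Theorem~\ref{thm:conjugateisotropy}. Finally, I would note that different choices of orbit representatives change the construction only by these coherent conjugation isomorphisms, so the resulting simplicial $G$-Tambara functor is well defined up to canonical isomorphism.
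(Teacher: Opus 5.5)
\textbf{Overall route.} Your approach matches the paper's. You read the degree-$n$ term as $N_H^G$ applied to $H$-level tensors $H/K\otimes\und{R}$, and you use $N_H^G$ of the $H$-counits for projections. You then handle all remaining $G$-maps through a Weyl/conjugation action on $N_H^G$. For structure maps that lie over $G/H$ (those whose conjugating element is in $H$), your argument is complete and cleaner than the paper's. Mazur--Hoyer functoriality of $Y\mapsto Y\otimes\und{R}$ on finite $H$-sets, functoriality of $N_H^G$ and its compatibility with $\Box$, and initiality of $\und{A}$ give everything.

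\textbf{The gap.} It is exactly the step you flagged, and it does not close. Transport of structure along $c_{\gamma^{-1}}|_H$ gives
\[ c_{\gamma^{-1}}^*\bigl(N_K^Hi_K^H\und{R}\bigr)\cong N^H_{\gamma K\gamma^{-1}}i^H_{\gamma K\gamma^{-1}}\bigl(c_{\gamma^{-1}}^*\und{R}\bigr), \]
not $H/(\gamma K\gamma^{-1})\otimes\und{R}$. The twist lands on $\und{R}$ itself. For an arbitrary $H$-Tambara functor there is no isomorphism $c_{\gamma^{-1}}^*\und{R}\cong\und{R}$ when $c_\gamma|_H$ is not inner.

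For instance, take $G=A_4$, $H=V_4$, $\gamma$ a $3$-cycle, and $\und{R}$ the fixed point functor of a biquadratic field $L$ with its Galois action. Since $\mathrm{Aut}(L)=V_4$ is abelian, no ring automorphism of $L$ intertwines the action with its $\gamma$-conjugate.

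The twist is invisible only on free orbits, because $N_e^Gi_e^H\und{R}$ depends only on the ring $\und{R}(H/e)$. So for $G$-maps out of orbits $G/K$ with $K\neq e$ (including $G/H$) whose conjugating element acts non-innerly on $H$, you have nothing natural to assign. The edge maps of a Cayley graph of $G/H$ are an example. Your coherence and change-of-representative claims inherit this gap.

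\textbf{Lemma \ref{lem:Weyl} does not rescue this.} For the transversal $(g_i\gamma^{-1})$, formula \eqref{eqn:HHR} gives the coefficient $\gamma g_{k_i}^{-1}gg_i\gamma^{-1}$, not $g_{k_i}^{-1}gg_i$ as written in that proof (compare \eqref{second} in Lemma \ref{lemma:switchingtoconjugates}). Hence that lemma holds only when $[\gamma]$ has a representative centralizing $H$. This is true for the order-two subgroups used later, but not for a general normal $H$. The paper's proof reduces to standard projections precisely by citing that lemma, so it has the same hole.

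\textbf{What a correct statement needs.} Some extra hypothesis is required, for example one of the following.
\begin{enumerate}
\item $X\cong G\times_H Y$ for a simplicial $H$-set $Y$. Then $\cL_X^{G;H}(\und{R})\cong N_H^G\cL_Y^H(\und{R})$ by your two-stage argument.
\item Every element of $G$ acts on $H$ by an inner automorphism.
\item $\und{R}$ carries coherent isomorphisms $c_\gamma^*\und{R}\cong\und{R}$ for all $\gamma\in G$, as is the case for $\und{R}=i_H^G\und{T}$.
\end{enumerate}
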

\begin{proof}
  We have to prove the functoriality of the above definition. Compositions of projections $G/K_j \ra G/K_i \ra G/H$ have to induce maps
  \[ N_H^GN_{K_j}^Hi_{K_j}^H\und{R} \ra N_H^GN_{K_i}^Hi_{K_i}^H\und{R} \ra N_H^G\und{R} \]
  For a standard projection $\pi_{K_j}^{K_i} \colon G/K_j \ra G/K_i$ for $K_j < K_i$ we use
  the composite
  \[ \xymatrix@1{N_H^GN_{K_j}^H\und{R} \ar[rr]^(0.4){N_H^G(\xi)} &  &
      N_H^GN_{K_j}^HN_{K_i}^{K_j}i_{K_j}^{K_i}i_{K_i}^H\und{R} \ar[rrr]^(0.55){N_H^GN_{K_j}^H(\varepsilon^f)} &&& N_H^GN_{K_i}^Hi_{K_i}^H\und{R}}. \]
  Composing this with the effect of a second standard projection $\pi_{K_i}^H$ yields the composition with
  \[N_H^G(\varepsilon^f) \colon N_H^GN_{K_i}^Hi_{K_i}^H\und{R} \ra N_H^G\und{R}. \]
  The naturality of the norm $N_H^G$ then ensures that this agrees with directly using the standard projection $\pi_{K_j}^H$.

As we showed in Lemma \ref{lem:Weyl} that the Weyl-group action of $W_G(H) = G/H$ on norms $N_H^G$ is natural in the $H$-Tambara variable, we can restrict to standard projections. 

The same argument ensures that composites of equivariant maps 
  $G/{K_j} \ra G/K_i \ra G/K_\ell$ yield compatible maps on Loday constructions. As fold maps induce multiplications, this ensures that the face maps satisfy the simplicial identities. Degeneracies are built out of inclusions $\varnothing \hookrightarrow G/K_i$ or $\varnothing \hookrightarrow G/H$ and the resulting unit maps interact nicely with respect to composition as before. The mixed simplicial identities concern composites of unit maps and multiplications or unit maps and the effect of projections. For the second type of composites note that
  this concerns composites of the form
  \[ \varnothing \hookrightarrow G/K_j \ra G/K_i, \]
  so their effect is a composition
  \[ \xymatrix@1{\und{A} \ar[r]^(0.3)\eta & N_H^GN_{K_j}^Hi_{K_j}^H\und{R}
    \ar[rrr]^{N_H^G(\xi)} &&& N_H^GN_{K_i}^HN_{K_j}^{K_i}i^{K_i}_{K_j}i^H_{K_i} \und{R} \ar[rr]^(0.55){N_H^GN_{K_i}^H(\varepsilon^f)} & & N_H^GN_{K_i}^Hi_{K_i}^H\und{R}. }\]
Here, $\und{A}$ denotes the Burnside Tambara functor for the group $G$. Let's
denote this by $\und{A}_G$ and observe that $N_H^G\und{A}_H \cong \und{A}_G$. The above composite then corresponds to the diagram
\[\xymatrix{N_H^G\und{A}_H \ar@/_3ex/[rrrrrd]_{N_H^G(\eta_{N_{K_i}^Hi_{K_i}^H\und{R}})}
    \ar[rrr]^{N_H^G(\eta_{N_{K_j}^Hi_{K_j}^H\und{R}})} & & & N_H^G(N_{K_j}^Hi_{K_j}^H\und{R}) \ar[rr]^{N_H^G(\xi)} & & N_H^G(N_{K_i}^HN_{K_j}^{K_i}i^{K_i}_{K_j}i^H_{K_i} \und{R})
    \ar[d]^{N_H^G(N_{K_i}^H(\varepsilon^f))}\\
    & & & & & N_H^G(N_{K_i}^Hi_{K_i}^H\und{R}).  } \]
The naturality of the tensor product of finite $H$-sets with $H$-Tambara functors ensures that
\[\eta_{N_{K_i}^Hi_{K_i}^H\und{R}} = N_{K_j}^H(\varepsilon^f) \circ \xi \circ
  \eta_{N_{K_j}^Hi_{K_j}^H\und{R}}) \]
and hence the commutative diagram 
\[ \xymatrix{\varnothing \ar[r] \ar[dr] & G/K_j \ar[d] \\ &  G/K_i} \]
yields a commutative diagram
\[ \xymatrix{
    \und{A}_G \cong N_H^G(\und{A}_H) \ar[dr]\ar[r] & G \otimes_H (H/K_j \otimes \und{R}) \ar[d] \\
    & G \otimes_H (H/K_i \otimes \und{R}). }\]

\end{proof}  
    
\subsection{Two non-trivial isotropy groups (and their conjugates)}  
In the case of Real Hoch\-schild homology we will need the following 
setting: Assume that the occurring isotropy subgroups in a chosen 
orbit decomposition of $X$ are $e$, $H$ and $H'$. Assume that there is an  
  automorphism $\varphi$ of $G$   with $\varphi(H) = H'$.  
  
  Then if $\varphi$ is just conjugation by an element of $g$, we know how to define  $\cL_X^{G;H'}(\und{R})$ for every $H'$-Tambara functor $\und{R}$ by  Theorem
  \ref{thm:conjugateisotropy}.  
  But for Real Hochschild homology, we want a unified treatment of this case and the case that $\varphi$ is a general automorphism where we want to identify orbits as $G/H$ or $G/H'$ and want to make sure that the simplicial structure maps in our $G$-simplicial complex never map between these.  This condition will automatically hold if $H$ and $H'$ are not conjugate since there are no $G$-equivariant maps $G/H\to G/H'$ in this case!  In the case where the subgroups are conjugate, we are making a nontrivial assumption to get a unified construction.  However, if we pick a particular conjugation map $\varphi$ which maps $H$ to $H'$, we can use Lemma \ref{lemma:switchingtoconjugates} to identify the construction we are doing here with the construction in   Theorem
  \ref{thm:conjugateisotropy}.  And if we want to allow general $G$-maps as our simplicial structure maps, we can just use that theorem.

We will show that if all the orbits are identified with $G/e$, $G/H$, and $G/H'$ in a way that  none of the structure maps in $X$ map a summand $G/H$ to $G/H'$ or
  vice versa, we can define
  $\cL_X^{G;H'}(\und{R})$ for every $H'$-Tambara functor $\und{R}$.

Clearly there are examples of subgroups $H$ and $H'$  which are isomorphic via an outer automorphism $\varphi$ of $G$ and in addition there is an equivariant map $G/H \ra G/H'$ (for instance $H=H'$ and so $G/H$ \emph{does} map equivariantly to $G/H'$).  Then the condition of not mapping $G/H$ to $G/H'$ in the simplicial $G$-set is necessary.  The following example illustrates this phenomenon. 
\begin{ex}
Let $G$ be the alternating group on four letters, $A_4$. Then its outer 
automorphism group is isomorphic to $C_2$ and is generated by any conjugation 
by an odd permutation $\sigma \in \Sigma_4$. We consider $\varphi \colon A_4
\ra A_4$ which conjugates by the transposition $(12)$. Then $\varphi$ induces
a non-trivial automorphism on the Klein four group,
   \[  K_4 = \{\id, (12)(34), (13)(24), (14)(23)\}, \]
which fixes the identity and the element $(12)(34)$ and exchanges the other 
two elements. However, this automorphism on $K_4$ is not induced by a
conjugation in $A_4$.
In this case, $K_4 = H = H'$, and $K_4$ is normal in $A_4$.  Hence there are,
of course, non-trivial equivariant self-maps $A_4/K_4 \ra A_4/K_4$, but these
are not induced by $\varphi$. 
\end{ex}

As in the discussion in the beginning of Section \ref{sec:oneisotropy}, we will work with a chosen orbit decomposition of a finite $G$-simplicial set $X$. In the following, we identify each orbit in $X_n$ as $G/e$, $G/H$, or $G/H'$.  
\begin{thm}\label{thm:twosubgroups}
Let $X$ be a finite simplicial $G$-set such that the isotropy subgroups 
in a chosen orbit decomposition of $X$ are of the form $e$, $H$ and
$H'$ where $H$ and $H'$ are proper subgroups of $G$. Assume that there is an
automorphism $\varphi$ of $G$ such that $\varphi(H) = H'$ and that none of the
structure maps in $X$ map an orbit $G/H$ 
to an orbit $G/H'$ or vice versa. 
Then $\cL_X^{G;H'}(\und{R})$ is defined for any $H'$-Tambara functor
$\und{R}$ \end{thm}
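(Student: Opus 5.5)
I follow the pattern of the proof of Theorem \ref{thm:conjugateisotropy}. First I write down the simplicial degrees. With the chosen orbit decomposition,
\[ X_n = \bigsqcup_{E_e} G/e \sqcup \bigsqcup_{E_H} G/H \sqcup \bigsqcup_{E_{H'}} G/H'. \]
For an $H'$-Tambara functor $\und{R}$ I define
\[ \cL_X^{G;H'}(\und{R})_n := \Big(\bigbox_{E_e} N_e^Gi_e^{H'}\und{R}\Big) \Box \Big(\bigbox_{E_H} N_H^G\varphi^*\und{R}\Big) \Box \Big(\bigbox_{E_{H'}} N_{H'}^G\und{R}\Big). \]
Here $\varphi^*\und{R}$ is the $H$-Tambara functor of Lemma \ref{lem:varphir}, applied to the restricted isomorphism $\varphi|_H\colon H \ra H'$. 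Throughout I use the flip action on all norm-restriction terms $N_e^{H}i_e^{H'}\und{R}$ and $N_e^{H'}i_e^{H'}\und{R}$.

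Next I list the generating maps. Since no structure map sends a summand $G/H$ to $G/H'$ or back, every $X(f)$ is a composite of the following:
\begin{itemize}
\item inclusions $\varnothing \ra G/K$ for $K \in \{e,H,H'\}$;
\item fold maps $G/K\sqcup G/K \ra G/K$;
\item permutations of summands;
\item equivariant self-maps of $G/e$, $G/H$ and $G/H'$;
\item equivariant maps $G/e \ra G/H$ and $G/e \ra G/H'$, each of which factors as a self-map of $G/e$ followed by the standard projection $\pi$ or $\pi'$.
\end{itemize}
Each generator is assigned a map of $G$-Tambara functors as follows.
\begin{itemize}
\item Inclusions go to units and fold maps go to multiplications.
\item Self-maps of $G/H$ and $G/H'$ act through the Weyl group actions of Lemma \ref{lem:Weyl}, applied to $\varphi^*\und{R}$ and to $\und{R}$ respectively.
\item The projection $\pi'\colon G/e\ra G/H'$ goes to the composite \eqref{eq:counitgh'}.
\item The projection $\pi\colon G/e\ra G/H$ goes to the composite \eqref{eq:counitgh}, which involves $\varepsilon^f_\varphi$, the map adjoint to the identity of $\und{R}(\varphi(H)/e)$.
\end{itemize}
The hypothesis that no structure map goes between $G/H$ and $G/H'$ is exactly what makes this list complete. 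Without it we would need a map $N_H^G\varphi^*\und{R}\ra N_{H'}^G\und{R}$ induced by an equivariant map $G/H\ra G/H'$, and no such map is available.

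Then I check the simplicial identities as in the earlier proof. Face and degeneracy composites reduce to several compatibilities:
\begin{itemize}
\item fold maps commute with projections, by naturality of $\nabla$;
\item fold maps are associative and unital;
\item units are compatible with projections, which holds because $\und{A}$ is initial among $G$-Tambara functors;
\item Weyl actions are compatible with projections.
\end{itemize}
Two separate copies of the argument in Theorem \ref{thm:conjugateisotropy} cover these: one for the pair $(e,H')$ and one for the pair $(e,H)$. The $H'$ part is literally that theorem. For the $H$ part I treat $\varphi^*\und{R}$ as an $H$-Tambara functor and need only that the map $N_e^Gi_e^{H'}\und{R}\ra N_H^G\varphi^*\und{R}$ from \eqref{eq:counitgh} is a map of $G$-Tambara functors and is natural in the required sense. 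This map replaces $N_H^G(\varepsilon^f)\circ\xi$.

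The main obstacle is equivariance and coherence of \eqref{eq:counitgh}. The isomorphism $\xi$ regroups $\bigotimes_G \und{R}(H'/e)$ into $\bigotimes_{G/H}\bigotimes_H$, where $H$ acts on the inner factors via $\varphi$. I must check that $\xi$ is $G$-equivariant with the flip action on $N_e^Hi_e^{H'}\und{R}$. I must also check that precomposing \eqref{eq:counitgh} with a $G$-self-map of $G/e$, or postcomposing it with a Weyl element of $W_G(H)$, gives the map assigned to the corresponding composite $G/e\ra G/H$; this makes the assignment independent of the chosen factorization.

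I will verify both facts at the free level $G/e$. On the free orbit $\varepsilon^f_\varphi$ is explicitly $\bigotimes_{h\in H} t_h\mapsto \prod_h \varphi(h)t_h$, and the tensor induction formula \eqref{eqn:HHR} transports through $\varphi$ in the same way as in the proof of Lemma \ref{lemma:switchingtoconjugates}. I will then lift the verification to spectra and use Ullman's theorem \cite[Theorem 5.2]{ullman}, exactly as in Lemma \ref{lem:Weyl}. Finally, when $\varphi$ is inner, I will note via Lemma \ref{lemma:switchingtoconjugates} that the resulting construction agrees with that of Theorem \ref{thm:conjugateisotropy}.
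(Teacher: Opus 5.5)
Your outline is the paper's proof. It has the same levelwise definition and the same list of generating maps, assigns \eqref{eq:counitgh'} and \eqref{eq:counitgh} to the two projections, and appeals to the argument of Theorem~\ref{thm:conjugateisotropy} for the simplicial identities.

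The gap is in the step you single out as the main obstacle. You claim that precomposing \eqref{eq:counitgh} with self-maps of $G/e$ makes the assignment independent of the chosen factorization. With the flip action this is false, so the free-level check you plan cannot succeed. Take $h\in H$ and $R_h\colon G/e\to G/e$, $x\mapsto xh$, so that $\pi\circ R_h=\pi$. The induced map moves the tensor factor in slot $x$ to slot $xh$. On the block $g_iH$, precomposing with it changes \eqref{eq:counitgh} from
\[ \bigotimes_{k\in H}t_{g_ik}\longmapsto \prod_{k\in H}\varphi(k)t_{g_ik} \quad\text{to}\quad \bigotimes_{k\in H}t_{g_ik}\longmapsto \prod_{k\in H}\varphi(k)\varphi(h)t_{g_ik}. \]
For $H=\{e,h\}$ these are $t_{g_i}\cdot\varphi(h)t_{g_ih}$ and $\varphi(h)t_{g_i}\cdot t_{g_ih}$. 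They differ as soon as $\varphi(h)$ acts nontrivially on $\und{R}(H'/e)$. The same happens for $G/e\to G/H'$, with $\varphi=\id$.

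The equivariance of $\xi$ is not the problem. With the flip action the inner factors of $N_e^Hi_e^{H'}\und{R}$ are only permuted, and $\varphi$ enters only through $\varepsilon^f_\varphi$.

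Consequently the construction is not functorial in all equivariant maps. It depends on fixing, via the chosen orbit representatives, one specific factorization of each structure map. The simplicial identities must then be verified for those specific factorizations.

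This verification works when face and degeneracy maps send the chosen representatives of free orbits to chosen representatives. That is the situation for $P_{2m}$: there $d_1y_e=x_0$, $d_0y_e=x'_0$, and every free orbit in higher degrees is represented by a degeneracy of $y_e$. The paper's proof treats only standard projections and tacitly works in this situation.

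Without such compatibility, no verification can help. Suppose $v$ has isotropy $H'$ and $a$ is a free $1$-simplex with $d_0a=d_1a=v$. Let $\sigma$ be a free $2$-simplex with $d_0\sigma=d_2\sigma=a$ and $d_1\sigma=h'a$ for some $h'\in H'$. Then $d_0d_1=d_0d_0$ holds in $X$, since $h'v=v$. But on the box factor belonging to $\sigma$, the two induced maps on $\cL^{G;H'}_X(\und{R})$ differ by exactly the twist computed above.

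So you should drop the independence claim. Instead, impose a compatibility condition on the chosen orbit decomposition, and check the identities for the fixed factorizations it determines.
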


\begin{proof}

Every $X_k$ is of the form 
\[   \Big(\bigsqcup_{J_H} G/H\Big) \sqcup \Big(\bigsqcup_{I_e} G/e\Big) \sqcup   \Big(\bigsqcup_{J_{H'}} G/H'\Big)  \]
for finite indexing sets $I_e, J_H, J_{H'}$.    We set
\[ (\cL_X^{G;H'}\und{R})_k :=
 \Big(\bigbox_{J_H}N_{H}^G \varphi^*(\und{R})\Big) \Box \Big(\bigbox_{I_e} N_e^Gi_e^{H'}\und{R}\Big)  \Box \Big(\bigbox_{J_{H'}}  N_{H'}^G\und{R}\Big). \]
Recall that $\varphi^*\und{R}$ is an $H$-Tambara functor as explained in Lemma \ref{lem:varphir}.

Throughout we use the flip-actions on the norms as discussed in Section
\ref{sec:norms}.  
We have to define the simplicial structure maps. Equivariant injective
non-isomorphic maps of finite $G$-sets are inclusions of orbits, hence the
building blocks are of the form $\varnothing \ra G/K$ for $K = e, H'$ or $H$.
Such maps induce the unit maps $\und{A} \ra N_e^Gi_e^{H'}\und{R}$, $\und{A} \ra
N_{H'}^G\und{R}$ and $\und{A} \ra N_H^G\varphi^*\und{R}$ of the
$G$-Tambara functors $N_e^Gi_e^{H'}\und{R}$, $N_{H'}^G(\und{R})$ and
$N_H^G\varphi^*\und{R}$. 

Isomorphisms are built out of permutations of summands and
equivariant self-maps. Permutation induce permutations of
$\Box$-factors. Equivariant self-maps of $G/e$, $G/H$ or $G/H'$ come from the respective
Weyl group actions, whose effects on our Tambara constructions were explained
in Lemma \ref{lem:Weyl}.  By our assumption,  equivariant maps between $G/H$ and $G/H'$ 
will not occur in the simplicial structure maps.. 

Fold maps induce multiplication. A non-isomorphic surjection on orbits
has to involve projections $G/e \ra G/H$ or $G/e \ra G/H'$.

For $G/e \ra G/H'$ we use the map  
\[ \xymatrix@1{N_e^Gi_e^{H'}\und{R} \ar[r]^(0.45)\xi &
    N_{H'}^GN_e^{H'}i_e^{H'}\und{R} \ar[rr]^(0.6){N_{H'}^G(\varepsilon^f)}
    & & N_{H'}^G\und{R}}\]
from \eqref{eq:counitgh'} and for $G/e \ra G/H$ we use the map from \eqref{eq:counitgh} 
\[\xymatrix@1{N_e^Gi_e^{H'}\und{R} \ar[r]^(0.45)\xi & N_{H}^GN_e^{H}i_e^{H'}\und{R} \ar[rr]^(0.55){N_{H}^G(\varepsilon^f_\varphi)} & & N_{H}^G(\varphi^*\und{R}).}  \]

The face maps in the simplicial structure induce fold maps combined
with surjections on orbits. 

Degeneracies induce insertions of the unit map. The simplicial
identities can then be checked by a direct  argument as in the proof
of Theorem \ref{thm:conjugateisotropy}.

\end{proof}
\begin{ex}
  For $m \geq 1$  let $D_{2m}$ be the dihedral group  with $2m$-elements. We will use the presentations
  \begin{align*}
    D_{2m} = & \langle r,s \mid s^2=e=r^m, sr^{m-1}=rs\rangle \\
    = & \langle r,s \mid s^2=e=r^m, srs=r^{-1}\rangle.
\end{align*}
We consider the subgroups of order two $D_2 = \langle s \mid s^2=e \rangle$ and
$D'_2 = \langle rs \mid (rs)^2=e\rangle$. We will consider $\varphi\colon D_{2m} \ra D_{2m}$ defined by $\varphi(rs) = s$ and $\varphi(r)=r$. Then $\varphi(D'_2) = D_2$. 
If $m = 2n+1$, then $D_2$ and $D'_2$ are actually conjugate in $D_{4n+2}$ because
in that case 
\[ r^{-n}sr^n = r^{-2n}s = rs. \]
Thus for odd $m$ the automorphism $\varphi$ is inner. The subgroups $D_2$ and $D'_2$ are \emph{not} conjugate in $D_{2m}$ if $m$ is even and then
$\varphi$ represents an outer automorphism. In \cite{akgh}
 the authors still use the notation $c_\zeta$ for the isomorphism
 between $D_2$ and $D'_2$ in all cases. 
\end{ex}

\section{Real Hochschild homology} \label{sec:RealHH}

Real Hochschild homology is defined in \cite[Definition 4.7]{akgh} for discrete $E_\sigma$-rings $\und{R}$. Explicitly, such a discrete $E_\sigma$-ring can be
described as follows \cite[Lemma 4.8]{akgh}
\begin{enumerate}
\item 
  It is a $D_2$-Mackey functor $\und{R}$ together with an associative ring structure on $\und{R}(D_2/e)$ for which the $D_2$-action acts as an anti-involution.
\item
  There is an $\tilde{N}_e^{D_2}i_e^*\und{R}$-module structure on $\und{R}$ that coincides with the standard $\und{R}(D_2/e) \otimes \und{R}(D_2/e)^{op}$-module structure on $\und{R}(D_2/e)$.
\item There is an element $1 \in \und{R}(D_2/D_2)$ such that its restriction
  $\res(1)$ is the unit $1 \in \und{R}(D_2/e)$. 
\end{enumerate}
Here, $\tilde{N}$ denotes the Mackey norm functor.

Of course, $D_2$-Tambara functors satisfy these properties, but also for
instance Mackey functors of the form $\und{R}^\fix$ for an associative ring
$R$ with anti-involution.

Real $D_{2m}$-Hochschild homology of an $E_\sigma$-ring $\und{R}$ is then
defined as the homotopy groups of a two-sided bar construction: 
\[ \und{HR}_*^{D_{2m}}(\und{R}) := \pi_* B(N_{D_2}^{D_{2m}}\und{R}, N_e^{D_{2m}}i_e^*\und{R}, N_{D'_2}^{D_{2m}}c_\zeta^*(\und{R})). \]

In order to model Real Hochschild homology via equivariant Loday
constructions, we first have to find suitable simplicial sets with a
$D_{2m}$-action for all $m \geq 1$:

\begin{prop}
  For all $m \geq 1$ the dihedral group $D_{2m}$ with $2m$ elements acts on the 
$1$-skeleton of a regular $2m$-gon. 
\end{prop}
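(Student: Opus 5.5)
We place the regular $2m$-gon in $\mathbb{R}^2 = \mathbb{C}$ with vertices $v_k = e^{2\pi i k/(2m)}$ for $k \in \mathbb{Z}/2m$. The dihedral group $D_{2m}$ acts on it as follows. The generator $r$ acts by rotation by $2\pi/m$, so $v_k \mapsto v_{k+2}$. The generator $s$ acts by complex conjugation, so $v_k \mapsto v_{-k}$. We check the relations of the presentation $\langle r,s \mid s^2=e=r^m, srs=r^{-1}\rangle$ on the vertex set: $r^m$ shifts indices by $2m \equiv 0$, $s^2$ is the identity, and $srs(v_k) = v_{-(-k+2)} = v_{k-2} = r^{-1}(v_k)$. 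Since the action is by linear isometries of the plane and permutes the vertices, it maps edges $[v_k,v_{k+1}]$ to edges, hence restricts to an action on the $1$-skeleton. The action is faithful, because $r^j$ for $0<j<m$ moves $v_0$, and $r^js$ sends $v_1 \mapsto v_{2j-1} \neq v_1$ unless $2j \equiv 2$, in which case it moves $v_0$ to $v_2$ (for $m\geq 2$). The cases $m=1,2$ are degenerate and need separate comments.

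To get a simplicial $D_{2m}$-set, as the introduction requires, we orient the edges so that the action is simplicial. We orient every edge from the even vertex to the odd vertex, as in the picture of $P_8$. Rotation by $r$ preserves the parity of indices, and $s$ sends $k$ to $-k$, which also preserves parity. So every group element maps even vertices to even vertices and odd to odd, and therefore preserves the chosen orientation. Thus the $1$-skeleton is the realization of a $1$-dimensional simplicial set $P_{2m}$ whose nondegenerate simplices are as follows: the vertex set $\{v_k\}$ in degree $0$, and in degree $1$ the $2m$ edges $v_{2j} \to v_{2j\pm1}$ with $d_1 = $ source and $d_0 = $ target (or the reverse convention). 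All higher simplices are degenerate, and the group acts by simplicial automorphisms.

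Next we record the orbit structure, since that is what later sections use. The even vertices form one orbit whose stabilizer of $v_0$ is $\langle s\rangle = D_2$. The odd vertices form one orbit whose stabilizer of $v_1$ is generated by the reflection fixing $v_1$, namely $rs$, since $rs(v_1) = r(v_{-1}) = v_1$; this is $D_2'$. The edges form a single free orbit, since a nontrivial element fixing an edge setwise would have to swap its endpoints, which would reverse the orientation. So $(P_{2m})_0 = D_{2m}/D_2 \sqcup D_{2m}/D_2'$ and $(P_{2m})_1$ consists of the degenerate copies together with one copy of $D_{2m}/e$.

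The only real obstacle is the small cases $m=1$ and $m=2$, where a "regular $2m$-gon" degenerates. For $m=1$, $D_2=\{e,s\}$ and the $2$-gon has two vertices $v_0=1$ and $v_1=-1$ joined by two edges, the upper and lower arcs. Here $s$ fixes both vertices and swaps the two edges, and $rs=s$, which matches the $S^\sigma$ model from Section~\ref{sec:norms}. For $m=2$ we use the square with $D_4 \cong C_2\times C_2$. For $m=1$ we interpret the polygon as this bigon with curved edges and check the orbit description directly.
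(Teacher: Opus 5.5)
Your proof is correct, but it runs in the opposite direction from the paper's. The paper does not embed anything in the plane. It writes down the simplicial $D_{2m}$-set directly: the degree-$0$ part is $D_{2m}/\langle s\rangle \sqcup D_{2m}/\langle rs\rangle$ (points $x_k$, $x'_k$), there is one free orbit $\{y_g\}$ of nondegenerate $1$-simplices, and the face maps are $d_1(y_g)=g x_0$ and $d_0(y_g)=g x'_0$. Explicitly, $d_1(y_{r^k})=d_1(y_{r^ks})=x_k$, $d_0(y_{r^k})=x'_k$ and $d_0(y_{r^ks})=x'_{k-1}$. Equivariance is automatic there, since a map out of a free orbit is determined by the image of $y_e$. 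The fact that the realization is a $2m$-cycle is left to the pictures.

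You instead start from the geometric polygon. You realize $D_{2m}$ as the index-two subgroup of the full symmetry group $D_{4m}$ that preserves the even/odd bipartition of the vertices, and you derive the orientation and the orbit structure from that. Under $x_k=v_{2k}$, $x'_k=v_{2k+1}$, $y_{r^k}=(v_{2k}\to v_{2k+1})$ and $y_{r^ks}=(v_{2k}\to v_{2k-1})$, your model reproduces exactly the paper's face maps.

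What each route buys:
\begin{itemize}
\item Your route justifies the name ``$2m$-gon''. It also explains why the orientation is compatible with the action (parity is preserved) and why the group is $D_{2m}$ rather than $D_{4m}$.
\item The paper's route is uniform in $m$, with no special bigon case for $m=1$. It directly produces the orbit decomposition that is used later for the Loday construction.
\end{itemize}

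Two small remarks. First, $m=2$ is not degenerate and needs no separate comment. Second, in the freeness argument you should also exclude elements that fix both endpoints of an edge. The simplest way is to note that the orbit of $v_0\to v_1$ already contains all $2m$ edges, so its stabilizer is trivial.
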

We denote this $1$-skeleton by $P_{2m}$.

\begin{proof}
For $m \geq 1$ and $D_{2m}$, the non-degenerate simplices of $P_{2m}$ are two orbits in degree zero
\[ D_{2m}\cdot\{x_0\} = \{x_0, x_1,\ldots,x_{m-1}\} \cong D_{2m}/\{e,s\}
  \text{ and } D_{2m}\cdot\{x'_0\}=\{x'_0, x'_1 , \ldots, x'_{m-1}\} \cong 
  D_{2m}/\{e,rs\}\]
and a free orbit in degree one $D_{2m}\cdot\{y_e\}=\{y_g \mid g \in D_{2m}\}$. The face maps are determined by 
\begin{align*}
  d_0: D_{2m}/e \{y_e\} & \to D_{2m}/\{e, rs\} \{x'_0\},\\
  y_{r^k} & \mapsto x'_k ,\\
  y_{r^ks} & \mapsto x'_{k-1};\\
  d_1: D_{2m}/e \{y_e\} & \to D_{2m}/\{e, s\} \{x_0\},\\
  y_{r^k} & \mapsto x_k ,\\
  y_{r^ks} & \mapsto x_{k}.
\end{align*}
\end{proof}

We draw the first low dimensional cases: For $m=1$ we obtain the $1$-skeleton of a $2$-gon and this can be identified with  $S^\sigma$, the simplicial model of
the $1$-point compactification of the real sign-representation: 
\[\xymatrix{x_0 \ar@/_2ex/[d]_{y_e} \ar@/^2ex/[d]^{y_s}\\
  x'_0 }\]
Here, $x_0$ and $x'_0$ have the trivial action and correspond to the
$D_2$-set $D_2/D_2$ whereas the $1$-simplices form a free $D_2$-orbit
$D_2/e$.

\medskip 
For $m=2$ the group $D_4$ acts on
$\xymatrix@C=0.5cm@R=0.5cm{  & x_0\ar[dr]^{y_s} \ar[dl]_{y_e} & \\
    x'_0 & & x'_1. \\
    & x_1 \ar[ur]_{y_r} \ar[ul]^ {y_{rs}}&}$
  
The reflection $s$ along the vertical axis fixes $x_0$ and $ x_1$ but
interchanges $x'_0$ and $x'_1$ whereas the rotation by $180$-degrees
has the orbits $\{x_0, x_1\} \cong D_4/\langle s\rangle$ and
$\{x'_0, x'_1\} \cong D_4/\langle rs\rangle$. Note that $rs=sr$ in this
case. The $1$-simplices form a free $D_4$-orbit. 

\medskip
For $m = 3$ the symmetric group on three elements aka $D_6$ acts on the
$1$-skeleton of the regular hexagon
\[ \xymatrix@C=0.8cm@R=0.5cm{
  & x_0 \ar[dr]^{y_s} \ar[dl]_{y_e}& \\
  x'_0 & & x'_2\\
 x_1  \ar[u]^{y_{rs}}\ar[dr]_{y_r} & & x_2 \ar[u]_{y_{r^2}} \ar[dl]^{y_{r^2s}}\\
  & x'_1 &   }\]

The orbits of the rotation by $120$-degrees, $r$, on the zero-cells are
$\{x_0,x_1,x_2\}$ and $\{x'_0, x'_1,x'_2\}$, whereas the reflection
at the vertical axis fixes $x_0$ and $x'_1$, but has the non-trivial
orbits $\{x'_0,x'_2\}$ and $\{x_1,x_2\}$. 
Thus the zero-cells form two $D_6$-orbits, $D_6x_0 = \{x_0,x_1,x_2\}$
and $D_6x'_0 = \{x'_0,x'_1,x'_2\}$. Again, the $1$-simplices form a
free $D_6$-orbit. 

\medskip
For $D_8$ the situation becomes more interesting. We consider the
$1$-skeleton of the regular octagon:

\newpage
\mbox{}\\
\vspace{1cm}

\hspace{4cm}
\begin{picture}(10,5)
\setlength{\unitlength}{0.5cm}
\put(0,0){$\bullet$}
\put(-0.9,0){$x_1$}
\put(1,3){$\bullet$}
\put(0.3,3.7){$x'_0$}
\put(4,4){$\bullet$}
\put(3.8,4.4){$x_0$}

\put(2,4){$y_e$}
\put(6,4){$y_s$}
\put(2,-4){$y_{r^2s}$}
\put(6,-4){$y_{r^2}$}
\put(-0.3,2){$y_{rs}$}
\put(8.3,2){$y_{r^3}$}
\put(-0.3,-2){$y_r$}
\put(8.3,-2){$y_{r^3s}$}

\put(7,3){$\bullet$}
\put(7.3,3.4){$x'_3$}

\put(8,0){$\bullet$}
\put(8.5,0){$x_3$}
\put(7,-3){$\bullet$}
\put(7.3,-3.4){$x'_2$}

\put(4,-4){$\bullet$}
\put(4,-4.7){$x_2$}

\put(1,-3){$\bullet$}
\put(0.3,-3.4){$x'_1$}
\put(0.2,0.2){\vector(1,3){1}}
\put(0.2,0.2){\vector(1,-3){1}}
\put(8.2,0.2){\vector(-1,3){1}}
\put(8.2,0.2){\vector(-1,-3){1}}
\put(4.2,4.2){\vector(3,-1){3}}
\put(4.2,4.2){\vector(-3,-1){3}}
\put(4.2,-3.8){\vector(3,1){3}}
\put(4.2,-3.8){\vector(-3,1){3}}
\end{picture}

\vspace{3cm}

Here the orbit of $x_0$ is $\{x_0,x_1,x_2,x_3\} \cong D_8/\langle
s\rangle$ and the orbit of $x'_0$ is $\{x'_0,x'_1,x'_2,x'_3\} \cong D_8/\langle
sr^3=rs\rangle$.
Note that for instance the stabilizers of $x_1$ and $x_3$ are
  $\langle r^2s = sr^2\rangle$ so there are choices
  involved. Similarly for the other orbit we get $\langle r^3s =
  sr\rangle$ as stabilizers of $x_1'$ and $x_3'$.


\bigskip

\begin{rem}
  There is a different beautiful family of graphs on which the groups $D_{2m}$
  act. We thank Imma G\'alvez for drawing our attention to them. Ruedi Suter
  described a subgraph of the Hasse graph of the Young lattice for all $n \geq 3$ consisting of
  those Young diagrams whose hull is contained in the staircase diagram of
  size $n$. For $n=3$ one obtains the graph 
\mbox{}\\
\vspace{0.5cm}

\hspace{4cm}
\begin{picture}(5,5)
\setlength{\unitlength}{0.5cm}
\put(1.8,-0.2){$\bullet$}
\put(2.3,-0.2){$(1)$}
\put(-0.2,-0.2){$\bullet$}
\put(-0.9,-0.2){$\varnothing$}
\put(2.8,1.8){$\bullet$}
\put(3.3,1.8){(1,1)}
\put(2.8,-2.1){$\bullet$}
\put(3.3,-2.1){(2)}
\put(0,0){\line(1,0){2}}
\put(2,0){\line(1,2){1}}
\put(2,0){\line(1,-2){1}}
\end{picture}

\vspace{1cm}

\noindent
The rotation in $D_6$ acts via clockwise rotation by $120$ degrees. The reflection action is by reflection at the $x$-axis, exchanging the partitions $(1,1)$ and $(2)$. For $n=4$ we obtain the graph 

\mbox{}\\
\vspace{0.5cm}

\hspace{4cm}
\begin{picture}(5,5)
\setlength{\unitlength}{0.5cm}

\put(0.7,-0.2){$\bullet$}
\put(0.9,0){\line(1,0){1}}
\put(1.8,-0.2){$\bullet$}
\put(2,0){\line(1,1){1}}
\put(2,0){\line(1,-1){1}}

\put(2.8,0.8){$\bullet$}
\put(2.8,-1.2){$\bullet$}
\put(3,1){\line(0,1){1.1}}
\put(3,-1.1){\line(0,-1){1.1}}

\put(2.8,1.9){$\bullet$}
\put(2.8,-2.3){$\bullet$}

\put(4,0){\line(1,0){1.1}}

\put(4,0){\line(-1,1){1}}
\put(4,0){\line(-1,-1){1}}

\put(3.8,-0.2){$\bullet$}
\put(5,-0.2){$\bullet$}
\end{picture}

\vspace{1cm}

\noindent
The inner vertices correspond to the partitions $(1)$, $(1,1)$, $(2,1)$ and
$(2)$ and the outer vertices to $\varnothing$, $(1,1,1)$, $(2,2)$, and $(3)$. The rotation in $D_8$ acts by clockwise rotation by $90$ degrees and the reflection sends a partition $\lambda$ to its transpose $\lambda^t$.  
Its associated simplicial $D_8$-set is an octagon with $4$ spikes and is equivalent to $P_8$ because the outer edges can be contracted. 

For $n$  odd  we always get a trivial orbit in the center corresponding to a staircase diagram of half the size. For $n$ even the isotropy subgroups grow with $n$. For $n =6$ we already have a stabilizer of order $6$ for the $D_{12}$-action. You can find the explicit description of the rotation action and the  pictures for $n$ up to $8$ in \cite{suter}.
\end{rem}

As $P_{2m}$ is a simplical $D_{2m}$-set, we can define the ordinary
$D_{2m}$-equivariant Loday construction of any $D_{2m}$-Tambara functor
$\und{T}$ with respect to
$P_{2m}$: 
\[
\cL_{P_{2m}}^{D_{2m}}(\und{T}) 
\]
\begin{lem}
  For any $D_{2m}$-Tambara functor $\und{T}$ there is an isomorphism of
  simplicial $D_{2m}$-Tambara functors 
  \[ \cL^{D_{2m}}_{P_{2m}}(\und{T}) 
\cong B(N^{D_{2m}}_{D_2} i^{D_{2m}}_{D_2}\und{T}, N^{D_{2m}}_e i^{D_{2m}}_e \und{T}, N^{D_{2m}}_{D'_2} i^{D_{2m}}_{D'_2}\und{T}). \] 
\end{lem}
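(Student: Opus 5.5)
We need to identify the simplicial structure of $P_{2m}$ level by level and match it with the simplicial structure of the two-sided bar construction. The first step is to write down the $n$-simplices of $P_{2m}$. Since $P_{2m}$ is $1$-dimensional with non-degenerate simplices $x_0$, $x'_0$ in degree $0$ and the free orbit $\{y_g\}$ in degree $1$, every $n$-simplex is a degeneracy of one of these. Concretely,
\[ (P_{2m})_n \cong D_{2m}/\langle s\rangle \sqcup \bigsqcup_{i=1}^{n} D_{2m}/e \sqcup D_{2m}/\langle rs\rangle , \]
where the $i$-th free summand consists of the degeneracies $s_{j_1}\cdots s_{j_{n-1}} y_g$ of the $1$-simplices that are ordered so that the "break point" sits in position $i$. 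This is the standard description of $\Delta^1$ as $\{0\}\sqcup(\text{$n$ interior points})\sqcup\{1\}$, which we make equivariant here. Applying $-\otimes\und{T}$ and using Mazur/Hoyer, we get in degree $n$
\[ N_{D_2}^{D_{2m}}i_{D_2}\und{T}\,\Box\,(N_e^{D_{2m}}i_e\und{T})^{\Box n}\,\Box\, N_{D'_2}^{D_{2m}}i_{D'_2}\und{T}, \]
which is exactly the $n$-th level of the bar construction.

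The second step is to compare the face and degeneracy maps. The degeneracies of $P_{2m}$ insert a new free orbit, which corresponds to units $\und{A}\to N_e^{D_{2m}}i_e\und{T}$. Inner faces merge two adjacent free orbits by a fold map, which corresponds to multiplication in $N_e i_e\und{T}$. The outer face $d_0$ maps the first free orbit onto the $D_{2m}/\langle rs\rangle$ summand via $d_0$ of $P_{2m}$, and $d_n$ (equivalently $d_1$ in degree one) maps the last free orbit onto $D_{2m}/\langle s\rangle$. Under $-\otimes\und{T}$, these projections $D_{2m}/e\to D_{2m}/K$ induce the module structure maps $N_e i_e\und{T}\Box N_K i_K\und{T}\to N_K i_K\und{T}$, which are given by the counit-type map $N_e^{D_{2m}}i_e\und{T}\to N_K^{D_{2m}}i_K\und{T}$ followed by multiplication. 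This is precisely the action used in the bar construction.

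The main obstacle is to check that the specific identifications of orbits in the formulas for $d_0$ and $d_1$ match the standard projections up to the fixed isomorphisms. For $d_1$ we have $y_{r^k}\mapsto x_k$ and $y_{r^ks}\mapsto x_k$, which is the standard projection $D_{2m}/e\to D_{2m}/\langle s\rangle$, $g\mapsto g\langle s\rangle$. For $d_0$ we have $y_{r^k}\mapsto x'_k$ and $y_{r^ks}\mapsto x'_{k-1}$. Since $r^ks=r^{k-1}(rs)$, this is again the standard projection $g\mapsto g\langle rs\rangle$. Hence no Weyl-group twist enters, and both outer module structures are the canonical ones. We also need to check that the chosen orbit decomposition of $(P_{2m})_n$ is compatible with all simplicial operators, so that the induced maps never involve nontrivial self-maps of orbits beyond the identity. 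This follows from the fact that degeneracies and faces of $\Delta^1$-type simplices preserve the labelling $y_g$. Naturality of $X\mapsto X\otimes\und{T}$ then yields the isomorphism of simplicial $D_{2m}$-Tambara functors.
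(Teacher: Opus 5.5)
Your proposal is correct and follows the same route as the paper. You identify $(P_{2m})_n \cong D_{2m}/D_2 \sqcup (D_{2m}/e)^{\sqcup n} \sqcup D_{2m}/D'_2$ levelwise, match inner faces with fold maps, outer faces with the counit-induced module structures, and degeneracies with unit insertions; your explicit check that $d_1$ and $d_0$ are the standard projections $g\mapsto g\langle s\rangle$ and $g\mapsto g\langle rs\rangle$ (so no Weyl-group twist enters) is a detail the paper leaves implicit. The one point you share with the paper is orientation: $d_0$ lands in the $D'_2$-orbit, which sits in the right-hand slot of $B$ in the statement, so with the usual convention ($d_0$ acting via the left module) your identification pairs the Loday face $d_i$ with the bar face $d_{n-i}$, and one must either use the opposite convention for $B$ or swap the outer terms---harmless for homotopy groups.
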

\begin{proof}
  The orbit structure of $P_{2m}$ ensures that we obtain an isomorphism for every fixed simplicial degree $n$:
\[ \cL^{D_{2m}}_{P_{2m}}(\und{T})_n 
\cong B_n(N^{D_{2m}}_{D_2} i^{D_{2m}}_{D_2}\und{T}, N^{D_{2m}}_e i^{D_{2m}}_e \und{T}, N^{D_{2m}}_{D'_2} i^{D_{2m}}_{D'_2}\und{T}). \]

The face maps in $\cL^{D_{2m}}_{P_{2m}}(\und{T})$ and in the two-sided bar construction are induced by norms of the multiplicative counit maps
$N^{D_2}_ei^{D_{2}}_ei_{D_2}^{D_{2m}} \und{T} \to i^{D_{2m}}_{D_2}\und{T}$ and $N^{D'_2}_ei^{D'_{2}}_ei_{D'_2}^{D_{2m}} \und{T} \to
i^{D_{2m}}_{D'_2}\und{T}$ and by fold maps. The degeneracies are given by unit insertions in the same spots in both simplicial Tambara functors. 
\end{proof}

As a consequence of Theorem \ref{thm:twosubgroups} we obtain the following result: 
\begin{thm} \label{thm:RealHH}
  Assume that $\und{R}$ is a $D_2$-Tambara functor and let $\varphi \colon D_{2m} \cong D_{2m}$ be the automorphism defined by $\varphi(rs)=s$ and $\varphi(r)=r$. Then
  \[ \mathcal{L}_{P_{2m}}^{D_{2m};D_2}(\und{R}) \cong B(N^{D_{2m}}_{D_2}
    \und{R}, N^{D_{2m}}_e i_e \und{R}, N^{D_{2m}}_{D'_2} \varphi^*(\und{R}))
  \]
  and this is isomorphic to the Real $D_{2m}$-Hochschild homology
  $\und{HR}_*^{D_{2m}}(\und{R})$ of $\und{R}$ viewed as a discrete
  $E_\sigma$-ring with values in graded $D_{2m}$-Mackey functors. 
\end{thm}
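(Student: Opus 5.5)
The plan is to apply Theorem \ref{thm:twosubgroups} with $G = D_{2m}$, $H = D'_2 = \langle rs\rangle$, $H' = D_2 = \langle s\rangle$ and the automorphism $\varphi$ of the statement. This $\varphi$ satisfies $\varphi(D'_2) = D_2$.

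First I check the hypotheses. The orbit decomposition of $P_{2m}$ from the proposition describing its simplicial structure has isotropy only $e$, $\langle s\rangle$ and $\langle rs\rangle$ in degree $0$ and free orbits in degree $1$. In simplicial degree $n$ the nondegenerate $1$-simplices produce $n$ free orbits, so
\[ (P_{2m})_n \cong D_{2m}/D_2 \sqcup \bigsqcup_{i=1}^n D_{2m}/e \sqcup D_{2m}/D'_2 . \]
No structure map sends the $x$-orbit to the $x'$-orbit or vice versa. Face maps from degree $1$ send $y$'s to $x$'s via $d_1$ and to $x'$'s via $d_0$. Degeneracies and all higher structure maps are generated by these, so they keep the $x$-summand and the $x'$-summand separate. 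Theorem \ref{thm:twosubgroups} therefore gives a well-defined $\cL_{P_{2m}}^{D_{2m};D_2}(\und{R})$, and in degree $n$ it is
\[ N_{D_2}^{D_{2m}}\und{R} \Box \Big(\bigbox_{i=1}^n N_e^{D_{2m}} i_e\und{R}\Big) \Box N_{D'_2}^{D_{2m}}\varphi^*\und{R}. \]
Levelwise this agrees with $B_n(N^{D_{2m}}_{D_2}\und{R}, N^{D_{2m}}_e i_e\und{R}, N^{D_{2m}}_{D'_2}\varphi^*\und{R})$.

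Next I match the simplicial structure maps, following the proof of the preceding lemma for $D_{2m}$-Tambara functors. Inner face maps are fold maps of adjacent free orbits, so they become multiplication in $N_e^{D_{2m}}i_e\und{R}$. Degeneracies insert units. The outer faces $d_0$ and $d_n$ are composites of a fold map with the projections $D_{2m}/e\to D_{2m}/D_2$ and $D_{2m}/e\to D_{2m}/D'_2$. These act through the maps \eqref{eq:counitgh'} and \eqref{eq:counitgh}, so I must show that those composites are exactly the right and left module structures of $N^{D_{2m}}_{D_2}\und{R}$ and $N^{D_{2m}}_{D'_2}\varphi^*\und{R}$ over $N_e^{D_{2m}}i_e\und{R}$ used in the bar construction. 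I would verify this at the free level using the explicit face maps of $P_{2m}$: $y_{r^k}\mapsto x_k$, $y_{r^ks}\mapsto x_k$, and $y_{r^k}\mapsto x'_k$, $y_{r^ks}\mapsto x'_{k-1}$. Through $\xi$ these regroup the tensor factors indexed by $D_{2m}$ into pairs indexed by cosets of $D_2$, respectively $D'_2$, and then apply $\varepsilon^f$ or $\varepsilon^f_\varphi$. By Ullman's theorem, as used in Lemma \ref{lem:Weyl}, agreement at the free level suffices.

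Finally I compare with $\und{HR}_*^{D_{2m}}(\und{R})$, which is defined as $\pi_*B(N_{D_2}^{D_{2m}}\und{R}, N_e^{D_{2m}}i_e\und{R}, N_{D'_2}^{D_{2m}}c_\zeta^*\und{R})$. As noted in the example preceding this section, the isomorphism $D'_2\to D_2$ written $c_\zeta$ in \cite{akgh} is the restriction of $\varphi$. For a $D_2$-Tambara functor, viewed as a discrete $E_\sigma$-ring, the Mackey norms and module structures in \cite{akgh} agree with the Tambara norms on $\und{\pi}_0$; this is the compatibility \eqref{eq:normspectrum} together with Lemma \ref{lem:pi0}. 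Hence the two simplicial objects agree and so do their homotopy Mackey functors.

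I expect the main obstacle to be the identification of the outer face maps. One has to check that the flip-action counit $\varepsilon^f_\varphi$, composed with $\xi$ for the transversal of $D'_2$ and with the shift $y_{r^ks}\mapsto x'_{k-1}$, reproduces precisely the module action in \cite{akgh}, with no stray twist by $s$ or by $rs$. I would first settle the case $m=1$, where everything reduces to $B(\und{R},N_e^{C_2}i_e\und{R},\und{R})$ via Proposition \ref{prop:psi}. I would then do $m=2$ explicitly, and after that handle general $m$ by writing out the equivariance formula \eqref{eqn:HHR} for the transversals $\{r^k\}$ of $D_2$ and of $D'_2$.
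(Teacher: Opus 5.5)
Your proposal is correct and takes the same route as the paper. The paper obtains the theorem directly from Theorem \ref{thm:twosubgroups} with $H=D'_2$, $H'=D_2$ and $\varphi(D'_2)=D_2$, identifies the levels and face maps with those of the two-sided bar construction exactly as in the lemma for $D_{2m}$-Tambara functors just before it, and uses that the isomorphism $c_\zeta$ of \cite{akgh} is the restriction of $\varphi$. The paper does not write out the explicit free-level check of the outer face maps against the module structures, including the flip-versus-diagonal twist handled by Proposition \ref{prop:psi}, which you single out as the main obstacle, so your plan is more detailed than the published argument.
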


Usually one needs commutativity, hence Tambara functors, in order to define
equivariant Loday constructions. In the cases of the $1$-skeleta of the regular
$2m$-gons with the $D_{2m}$-action we just need a discrete $E_\sigma$-ring
$\und{R}$. 


\begin{prop} \label{prop:esigma}
Let $\und{R}$ be a discrete $E_\sigma$-ring. Then 
$\mathcal{L}_{P_{2m}}^{D_{2m};D_2}(\und{R})$ is defined for all $m \geq 1$ and
coincides with the Real $D_{2m}$-Hochschild homology of $\und{R}$. 
\end{prop}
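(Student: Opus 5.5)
Our plan is to define $\mathcal{L}_{P_{2m}}^{D_{2m};D_2}(\und{R})$ levelwise by the same formula as in Theorem \ref{thm:twosubgroups}, but with every ingredient replaced by its $E_\sigma$-analogue. We then observe that the resulting simplicial object is literally the two-sided bar construction defining $\und{HR}_*^{D_{2m}}(\und{R})$. Concretely, the orbit structure of $P_{2m}$ from the proposition describing it gives in simplicial degree $k$ one orbit $D_{2m}/D_2$ (the vertices $x_i$), $k$ free orbits (degenerate copies of the edges), and one orbit $D_{2m}/D'_2$ (the vertices $x'_i$). We set
\[ \mathcal{L}_{P_{2m}}^{D_{2m};D_2}(\und{R})_k := N_{D_2}^{D_{2m}}\und{R} \Box \Big(\bigbox_{i=1}^k N_e^{D_{2m}}i_e\und{R}\Big) \Box N_{D'_2}^{D_{2m}}\varphi^*\und{R}, \]
using the Mackey (Hill--Hopkins--Ravenel) norms on $\und{\pi}_0$ of Eilenberg--MacLane spectra as in \eqref{eq:normspectrum}. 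The ordering of the factors matters here, since $\und{R}(D_2/e)$ is only associative. Lemma \ref{lem:varphir} only needs the Mackey/Green structure and the anti-involution, so it still makes sense of $\varphi^*\und{R}$ as a $D'_2$-object. Moreover, $N_e^{D_{2m}}i_e\und{R}$ is an associative Green functor with the flip action.

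Next we define the structure maps. Degeneracies insert units, which exist by property (3) of a discrete $E_\sigma$-ring. The inner face maps come from fold maps $G/e\sqcup G/e\to G/e$, and these now go to the associative multiplication of $N_e^{D_{2m}}i_e\und{R}$, taken in the order dictated by the orientation of the edges. The outer faces $d_0,d_k$ come from the projections $D_{2m}/e\to D_{2m}/D'_2$ and $D_{2m}/e\to D_{2m}/D_2$. We replace the counit maps $\varepsilon^f$ of \eqref{eq:counitgh'} and \eqref{eq:counitgh} by the module action maps. Precisely, we apply $\xi\colon N_e^{D_{2m}}i_e\und{R}\cong N_{D_2}^{D_{2m}}N_e^{D_2}i_e\und{R}$ and then $N_{D_2}^{D_{2m}}$ of the $\tilde N_e^{D_2}i_e^*\und{R}$-module action on $\und{R}$ from property (2). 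This gives a right action of $N_e^{D_{2m}}i_e\und{R}$ on $N_{D_2}^{D_{2m}}\und{R}$, and the same recipe twisted by $\varphi$ gives a left action on $N_{D'_2}^{D_{2m}}\varphi^*\und{R}$. By the description of the face maps $d_0,d_1$ of $P_{2m}$, the edge $y_{r^k}$ goes to $x_k$ and $x'_k$, while $y_{r^ks}$ goes to $x_k$ and $x'_{k-1}$. Under $\xi$ this means each free tensor factor is sent to its $D_2$-coset (respectively its $D'_2$-coset), which is exactly how the bar construction in \cite[Definition 4.7]{akgh} is assembled. There are no maps between $G/D_2$ and $G/D'_2$, so the hypothesis of Theorem \ref{thm:twosubgroups} holds.

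Then we check the simplicial identities. The identities among inner faces follow from associativity. The identities involving outer faces follow from the action being associative and unital, i.e.\ from $N_{D_2}^{D_{2m}}\und{R}$ being a right module and $N_{D'_2}^{D_{2m}}\varphi^*\und{R}$ a left module over the algebra $N_e^{D_{2m}}i_e\und{R}$. The mixed identities come from unitality. With these in hand, the construction is identical, degree by degree and map by map, with $B(N_{D_2}^{D_{2m}}\und{R}, N_e^{D_{2m}}i_e\und{R}, N_{D'_2}^{D_{2m}}\varphi^*\und{R})$. Taking homotopy gives $\und{HR}^{D_{2m}}_*(\und{R})$, using that \cite{akgh}'s $c_\zeta$ is our $\varphi$ restricted to $D'_2$. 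When $\und{R}$ is a Tambara functor, everything reduces to Theorem \ref{thm:RealHH}, since the module action is then the counit $\varepsilon^f$.

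The main obstacle is the module-structure step. We must verify that $N_{D_2}^{D_{2m}}$ applied to the $\tilde N_e^{D_2}i_e^*\und{R}$-module structure, precomposed with $\xi$, is a genuine associative action of the noncommutative Green functor $N_e^{D_{2m}}i_e\und{R}$. The crux is that the ordering of tensor factors inside each $D_2$-coset $\{g,gs\}$ must match the $\und{R}(D_2/e)\otimes\und{R}(D_2/e)^{op}$-bimodule structure, with the $s$-factor acting on the opposite side. We would check this at the free level $D_{2m}/e$, where it becomes a statement about bimodules over $\und{R}(D_2/e)^{\otimes 2m}$. We would then extend to all levels by arguing on Eilenberg--MacLane spectra, as was done in Lemma \ref{lem:Weyl}, so that equivariance and compatibility with the flip action reduce to that free-level check.
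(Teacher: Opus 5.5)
Your proposal is correct and follows essentially the same route as the paper. The levels are box products of the norms attached to the three orbit types of $P_{2m}$, the linear ordering coming from the copies of $\Delta(-,[1])$ makes the inner faces the ordered multiplication of $N_e^{D_{2m}}i_e\und{R}$ and the outer faces the normed bimodule actions, and the resulting simplicial object is literally the bar construction defining $\und{HR}^{D_{2m}}_*(\und{R})$; the only difference is that you treat $m=1$ uniformly rather than separately.

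The step you flag as the main obstacle is in fact formal. $N_{D_2}^{D_{2m}}$ is strong symmetric monoidal and $\xi$ is an isomorphism of Green functors, which already encodes the ordering inside each coset $\{g,gs\}$, so the normed action is automatically associative and unital. It is better argued that way than via a free-level check, since a map of Mackey functors is not determined by its value at $D_{2m}/e$.
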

\begin{proof}
  For $m=1$ we already showed in \cite[\S 9]{lr} that
  $\mathcal{L}_{P_{2}}^{D_{2}}(\und{R}) = \mathcal{L}_{S^\sigma}^{D_{2}}(\und{R})$
  can be defined for fixed point functors associated with associative rings
  where $D_2$ acts by an anti-involution. The general case of discrete
  $E_\sigma$-rings  is similar:
  We define $D_2 \otimes_e \und{R}$ to be
  $\tilde{N}_e^{D_2}i_e^{D_2}\und{R}$ and $D_2 \otimes_{D_2} \und{R} := \und{R}$.
  As  $S^\sigma$ consists of two copies of $\Delta(-,[1])$ that are glued
  together at the endpoints, the linear ordering of the simplices of
  $\Delta(-,[1])$ ensures that the associativity of
  $\tilde{N}_e^{D_2}i_e^{D_2}\und{R}$ and the
  $\tilde{N}_e^{D_2}i_e^{D_2}\und{R}$-bimodule structure on $\und{R}$ suffice to
  define $\cL_{S^\sigma}^{D_2}(\und{R})$: In simplicial level $n$ we obtain
  \[ (S^\sigma)_n \otimes \und{R} = (D_2/D_2 \sqcup D_2/e^{\sqcup n} \sqcup D_2/D_2) \otimes \und{R} = \und{R} \Box \Big(\bigbox_{i=1}^n
    \tilde{N}_e^{D_2}i_e^{D_2}\und{R}\Big) \Box \und{R}. \]
  The degeneracies just insert units and the face maps induce multiplication in
  $\tilde{N}_e^{D_2}i_e^{D_2}\und{R}$ or the
  $\tilde{N}_e^{D_2}i_e^{D_2}\und{R}$-bimodule action on $\und{R}$.

  For $m > 1$ we observe that the simplicial structure of $P_{2m}$ is determined
  by the $0$- and $1$-simplices of $P_{2m}$. 
In degree zero we have two orbits 
\[ D_{2m}\cdot\{x_0\} = \{x_0, x_1,\ldots,x_{m-1}\} \cong D_{2m}/\{e,s\}
  \text{ and } D_{2m}\cdot\{x'_0\}=\{x'_0, x'_1 , \ldots, x'_{m-1}\} \cong 
  D_{2m}/\{e,rs\}\]
and the non-degenerate $1$-simplices constitute a free orbit $D_{2m}\cdot\{y_e\}=\{y_g \mid g \in D_{2m}\}$. As the face maps are determined by 
\begin{align*}
  d_0: D_{2m}/e \{y_e\} & \to D_{2m}/\{e, rs\} \{x'_0\},\\
  y_{r^k} & \mapsto x'_k ,\\
  y_{r^ks} & \mapsto x'_{k-1};\\
  d_1: D_{2m}/e \{y_e\} & \to D_{2m}/\{e, s\} \{x_0\},\\
  y_{r^k} & \mapsto x_k ,\\
  y_{r^ks} & \mapsto x_{k}, 
\end{align*}
  we only need the $\tilde{N}_e^{D_2}i_e^{D_2}\und{R}$-bimodule structure on $\und{R}$ to define these. Again, in higher degrees there is a linear ordering on the edges that constitute the free orbits, because each edge corresponds to a copy of $\Delta(-,[1])$. The face maps in higher degrees induce the multiplication on $\tilde{N}_e^{D_2}i_e^{D_2}\und{R}$ and the $\tilde{N}_e^{D_2}i_e^{D_2}\und{R}$-bimodule structure on $\und{R}$. 
\end{proof}

In the setting of equivariant stable homotopy theory we can define equivariant
Loday constructions relative to isotropy subgroups as for Tambara functors
using smash products of spectra instead of box products, using the Hill-Hopkins-Ravenel norms from \cite{hhr} instead of norms of Tambara or Mackey functors and using the change-of-groups
pullback as in Lemma \ref{lem:pi0}. Thus if $B$ is a genuine commutative $H'$ ring spectrum, and $X$ is a finite simplicial $G$-set such that in a chosen orbit
decomposition only the orbit types $G/e$, $G/H$ and $G/H'$ occur with $H,H'$ as
above, then we can set $G \otimes_e B := N_e^Gi_e^HB$, $G \otimes_{H'} B := N_{H'}^GB$ and $G \otimes_H B := N_H^G\varphi^*B$ where $\varphi$ is an automorphism of $G$ with $\varphi(H) = H'$. There is then a spectral analogue of Theorem \ref{thm:twosubgroups} and as a corollary we obtain an identification of all
$D_{2m}$-restrictions of the $O(2)$-spectrum of
Real topological Hochschild homology. There is a notion of an $E_\sigma$-ring spectrum (see \cite[Corollary 2.2]{akgh}). Any genuine ring spectrum where $D_2$ acts by anti-involution is an example \cite[Example 2.3]{akgh}. 

\begin{prop} \label{prop:thr}
If $A$ is an $E_\sigma$-ring spectrum, then  
\[ i^{O(2)}_{D_{2m}}\thr(A) \simeq \cL_{P_{2m}}^{D_{2m};D_2}(A).  \]
Moreover, if $A$ is connective  this yields an isomorphism of simplicial $D_{2m}$-Mackey functors on $\und{\pi}_0$ 
\[ \und{\pi}_0^{D_{2m}}(i^{O(2)}_{D_{2m}}\thr(A)) \cong \und{\pi}_0^{D_{2m}}\cL_{P_{2m}}^{D_{2m};D_2}(A) \cong \cL_{P_{2m}}^{D_{2m};D_2}(\und{\pi}_0^{D_2} A). \]
\end{prop}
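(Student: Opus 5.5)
The plan is to pass through the known description of $i^{O(2)}_{D_{2m}}\thr(A)$ as a two-sided bar construction in genuine $D_{2m}$-spectra, and then identify that bar construction with the spectral Loday construction degreewise. By \cite{akgh} (their identification of dihedral restrictions of $\thr$) there is an equivalence
\[ i^{O(2)}_{D_{2m}}\thr(A) \simeq B\big(N_{D_2}^{D_{2m}}A,\; N_e^{D_{2m}}i_e^{D_2}A,\; N_{D'_2}^{D_{2m}}\varphi^*A\big), \]
where $\varphi$ is the automorphism with $\varphi(rs)=s$, $\varphi(r)=r$, so that $\varphi(D_2')=D_2$. In \cite{akgh} this is written with $c_\zeta$; the example preceding Section~\ref{sec:RealHH} identifies $c_\zeta$ with our $\varphi$, and Lemma~\ref{lem:pi0} guarantees that $\varphi^*$ is a well-behaved symmetric monoidal pullback.

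Next I would unwind the spectral analogue of Theorem~\ref{thm:twosubgroups} for $X=P_{2m}$, with $H=D_2'$ and $H'=D_2$. The orbit decomposition of $P_{2m}$ from the proof of the proposition describing $P_{2m}$ has the following shape. In degree $n$ there is one orbit of type $D_{2m}/D_2$ (from $x_0$), one of type $D_{2m}/D'_2$ (from $x'_0$), and $n$ free orbits (the non-degenerate $1$-simplex $y_e$ together with its degeneracies). Moreover, the structure maps never send a $D_2$-orbit to a $D'_2$-orbit, so the hypothesis of Theorem~\ref{thm:twosubgroups} holds. The $n$-simplices of the Loday construction are therefore
\[ N_{D_2}^{D_{2m}}A\wedge (N_e^{D_{2m}}i_e A)^{\wedge n}\wedge N_{D'_2}^{D_{2m}}\varphi^*A. \]
The face maps are induced by $d_1\colon y\mapsto x$ and $d_0\colon y\mapsto x'$, and they are given by the composites \eqref{eq:counitgh'} and \eqref{eq:counitgh}, combined with multiplication. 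Checking on the free orbit with the formulas $y_{r^k}\mapsto x_k,x'_k$ and $y_{r^ks}\mapsto x_k, x'_{k-1}$ shows that these are exactly the left and right module actions in the bar construction. The degeneracies insert units on both sides. This produces an isomorphism of simplicial objects, and hence an equivalence on realizations. This mirrors the proof of Theorem~\ref{thm:RealHH}.

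The main obstacle is that $A$ is only $E_\sigma$, not commutative, so the Loday construction must be defined as in Proposition~\ref{prop:esigma}. I would set $D_2\otimes_e A:=N_e^{D_2}i_eA$ and use only the associative multiplication on it and the $N_e^{D_2}i_eA$-bimodule structure on $A$ provided by the $E_\sigma$-structure. The linear ordering of the edges in each $\Delta(-,[1])$ then makes the face maps well-defined without commutativity. The remaining work is to verify that the flip-action conventions agree with the module structures used in \cite{akgh}; Proposition~\ref{prop:psi} lets us switch between the flip and diagonal actions compatibly with the counits.

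For the $\und{\pi}_0$ statement, assume $A$ is connective. Then every term is connective, since norms and smash products preserve connectivity. Hence $\und{\pi}_0$ of the realization is the coequalizer of $d_0,d_1$ in degree $1$, that is, $\und{\pi}_0$ of the simplicial object. Next I would use that $\und{\pi}_0$ is strong monoidal on connective spectra, turning $\wedge$ into $\Box$, and that it commutes with norms: $\und{\pi}_0^{G}N_H^G E\cong N_H^G\und{\pi}_0^H E$ by \eqref{eq:normspectrum} and \cite{hhr}. Lemma~\ref{lem:pi0} handles $\varphi^*$. Together these identify $\und{\pi}_0$ of each simplicial level with $\cL_{P_{2m}}^{D_{2m};D_2}(\und{\pi}_0^{D_2}A)$ in that level, compatibly with the face and degeneracy maps. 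This gives the isomorphism of simplicial $D_{2m}$-Mackey functors.
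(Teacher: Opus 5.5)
Your proposal is correct and takes essentially the paper's route. The first equivalence comes from matching the spectral Loday construction for $P_{2m}$ levelwise with the \cite{akgh} bar construction $B(N_{D_2}^{D_{2m}}A, N_e^{D_{2m}}i_eA, N_{D'_2}^{D_{2m}}\varphi^*A)$, which the paper leaves implicit as a corollary of the spectral analogue of Theorem~\ref{thm:twosubgroups}; the $\und{\pi}_0$ statement is proved levelwise exactly as in the paper, using connectivity of norms, the fact that $\und{\pi}_0$ turns smash products into box products and commutes with norms, and Lemma~\ref{lem:pi0} for $\varphi^*$. Your extra observation that $\und{\pi}_0$ of the realization is the coequalizer of $d_0,d_1$ does not appear in the paper, but it is harmless and clarifies how the levelwise isomorphism relates to $\und{\pi}_0$ of $\thr(A)$.
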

\begin{proof}
  We prove the claim about the zeroth homotopy group. In \cite[Proposition 6.2]{lrz-gloday} we showed that there is an isomorphism of simplicial $G$-Tambara functors
  \[\und{\pi}_0^G(\cL_X^G(R)) \cong \cL_X^G(\und{\pi}_0^G(R))\]
  if $X$ is a finite $G$-simplicial set and $R$ is a connective genuine commutative $G$ ring spectrum and $\und{\pi}_0^G(R)$ is the associated $G$-Tambara functor. The proof was based on the facts that for a connective $R$ also the Hill-Hopkins-Ravenel norms of $R$ are connective and $\und{\pi}_0^G(N_H^Gi_H^GR) \cong N_H^Gi_H^G(\und{\pi}_0^G(R))$ and that $\und{\pi}_0^G$ turns smash products of spectra into box products.

In our case we have $(P_{2m})_n = D_{2m}/D_2 \sqcup \Big(\bigsqcup_{i=1}^n D_{2m}/e\Big)  \sqcup D_{2m}/D'_2$ and hence 
\[ \cL_{P_{2m}}^{D_{2m};D_2}(A)_n = N_{D_2}^{D_{2m}}(A) \wedge \Big(\bigwedge_{i=1}^n N_e^{D_{2m}}i_e^{D_2} A\Big) \wedge N_{D'_2}^{D_{2m}} \varphi^*A. \]

Applying $\und{\pi}_0$ yields with Lemma \ref{lem:pi0}
\begin{align*}
  \und{\pi}_0^{D_{2m}}\Big(\cL_{P_{2m}}^{D_{2m};D_2}(A)_n) & = \und{\pi}_0^{D_{2m}}\Big(N_{D_2}^{D_{2m}}(A) \wedge \Big(\bigwedge_{i=1}^n N_e^{D_{2m}}i_e^{D_2} A\Big) \wedge N_{D'_2}^{D_{2m}} \varphi^*A\Big) \\
  & \cong N_{D_2}^{D_{2m}}(\und{\pi}_0^{D_2}(A)) \Box \Big(\bigbox_{i=1}^n N_e^{D_{2m}}i_e^{D_2} \und{\pi}_0^{D_2}(A)\Big) \Box N_{D'_2}^{D_{2m}} \varphi^*\und{\pi}_0^{D_2}(A). \end{align*}
We therefore get an isomorphism in every simplicial level. As $\und{\pi}_0$ is
a strong symmetric monoidal functor, these levelwise isomorphisms combine to
give an isomorphism of simplicial Mackey functors. 
\end{proof}

\begin{rem}
  We hope that the above result is useful for establishing extra structure
  on Chloe Lewis' B\"okstedt type spectral sequence, similar to the Hopf algebra structure for the ordinary B\"okstedt spectral sequence established by Angeltveit and Rognes \cite{arognes}. Lewis showed for instance that the $D_2$-restriction of $\thr(A)$ carries a Hopf algebroid structure in the
  $D_2$-equivariant stable homotopy category if $A$ is commutative \cite[Theorem 5.11]{lewis}. Using the model $i_{D_2}^{O(2)}\thr(A) \simeq \cL_{S^\sigma}^{D_2}(A)$ this is relatively easy to see using geometrically defined maps such as pinch and fold maps on (subdivisions of) $S^\sigma$. Of course the situation for the $D_{2m}$-restrictions for $m > 1$ is more involved, but we plan to investigate extra structures on these in future work. 
  
\end{rem}

\begin{rem}
As $O(2) \cong S^1 \cup S^1\rho$ with $\rho$ being the reflection
across the $x$-axis (see e.g. \cite[\S 5]{akmp}), there is another
naturally occurring Loday construction related to $O(2)$: The standard
simplicial model of $O(2)$ has $D_{2n+2}$ as $n$-simplices and as
$D_{2n+2}$ is a semi-direct product of $C_{n+1}$ and $D_2$ we can
model $S^1 \cup S^1\rho$ in degree $n$ by two copies of the
$1$-skeleton of a regular $n+1$-gon, $P'_{n+1}$ where $D_2$ interchanges the
$2$-copies. Note that the orientation of the edges in $P'_{n+1}$ is dictated by
the rotation action of $C_{n+1}$ on $P'_{n+1}$. We denote the corresponding
simplicial $D_{2m+2}$-set also  
by $P'_{n+1} \sqcup P'_{n+1}$. Note that this disjoint union of
simplicial sets is equivariantly not disjoint. 

However, the corresponding Loday construction $\cL_{P'_{n+1} \sqcup
  P'_{n+1}}^{D_{2n+2}}(\und{R})$ is very different from the one of
Theorem \ref{thm:RealHH}: In every simplicial degree $k$ we get that
$(P'_{n+1} \sqcup P'_{n+1})_k$ is a disjoint union of free orbits. 
In particular, this Loday construction is \emph{not} related to Real
Hochschild homology. 
\end{rem}

\end{document}